\theoremstyle{plain}
\newtheorem{theorem}{Theorem}[section]
\newtheorem{proposition}[theorem]{Proposition}
\newtheorem{lemma}[theorem]{Lemma}
\newtheorem{corollary}[theorem]{Corollary}
\theoremstyle{definition}
\newtheorem{definition}[theorem]{Definition}
\newtheorem{remark}[theorem]{Remark}
\newcommand{\nc}{\newcommand}
\nc{\on}{\operatorname}
\nc{\Q}{\mathbb{Q}}
\nc{\Z}{\mathbb{Z}}
\nc{\cl}{\mathrm{cl}}
\nc{\fraka}{{\mathfrak a}} \nc{\bba}{{\mathbf a}}
\nc{\frakb}{{\mathfrak b}}
\nc{\frakc}{{\mathfrak c}}
\nc{\frakd}{{\mathfrak d}}
\nc{\frake}{{\mathfrak e}}
\nc{\frakf}{{\mathfrak f}}
\nc{\frakg}{{\mathfrak g}}
\nc{\frakh}{{\mathfrak h}}
\nc{\fraki}{{\mathfrak i}}
\nc{\frakj}{{\mathfrak j}}
\nc{\frakk}{{\mathfrak k}}
\nc{\frakl}{{\mathfrak l}}
\nc{\frakm}{{\mathfrak m}}
\nc{\frakn}{{\mathfrak n}}
\nc{\frako}{{\mathfrak o}}
\nc{\frakp}{{\mathfrak p}}
\nc{\frakq}{{\mathfrak q}}
\nc{\frakr}{{\mathfrak r}}
\nc{\fraks}{{\mathfrak s}}
\nc{\frakt}{{\mathfrak t}}
\nc{\fraku}{{\mathfrak u}}
\nc{\frakv}{{\mathfrak v}}
\nc{\frakw}{{\mathfrak w}}
\nc{\frakx}{{\mathfrak x}}
\nc{\fraky}{{\mathfrak y}}
\nc{\frakz}{{\mathfrak z}}
\nc{\frakA}{{\mathfrak A}}
\nc{\frakB}{{\mathfrak B}}
\nc{\frakC}{{\mathfrak C}}
\nc{\frakD}{{\mathfrak D}}
\nc{\frakE}{{\mathfrak E}}
\nc{\frakF}{{\mathfrak F}}
\nc{\frakG}{{\mathfrak G}}
\nc{\frakH}{{\mathfrak H}}
\nc{\frakI}{{\mathfrak I}}
\nc{\frakJ}{{\mathfrak J}}
\nc{\frakK}{{\mathfrak K}}
\nc{\frakL}{{\mathfrak L}}
\nc{\frakM}{{\mathfrak M}}
\nc{\frakN}{{\mathfrak N}}
\nc{\frakO}{{\mathfrak O}}
\nc{\frakP}{{\mathfrak P}}
\nc{\frakQ}{{\mathfrak Q}}
\nc{\frakR}{{\mathfrak R}}
\nc{\frakS}{{\mathfrak S}}
\nc{\frakT}{{\mathfrak T}}
\nc{\frakU}{{\mathfrak U}}
\nc{\frakV}{{\mathfrak V}}
\nc{\frakW}{{\mathfrak W}}
\nc{\frakX}{{\mathfrak X}}
\nc{\frakY}{{\mathfrak Y}}
\nc{\frakZ}{{\mathfrak Z}}
\nc{\bbA}{{\mathbb A}}
\nc{\bbB}{{\mathbb B}}
\nc{\bbC}{{\mathbb C}}
\nc{\bbD}{{\mathbb D}}
\nc{\bbE}{{\mathbb E}}
\nc{\bbF}{{\mathbb F}} \nc{\bbf}{{\mathbf f}}
\nc{\bbG}{{\mathbb G}}
\nc{\bbH}{{\mathbb H}}
\nc{\bbI}{{\mathbb I}}
\nc{\bbJ}{{\mathbb J}}
\nc{\bbK}{{\mathbb K}}
\nc{\bbL}{{\mathbb L}}
\nc{\bbM}{{\mathbb M}}
\nc{\bbN}{{\mathbb N}}
\nc{\bbO}{{\mathbb O}}
\nc{\bbP}{{\mathbb P}}
\nc{\bbQ}{{\mathbb Q}}
\nc{\bbR}{{\mathbb R}}
\nc{\bbS}{{\mathbb S}}
\nc{\bbT}{{\mathbb T}}
\nc{\bbU}{{\mathbb U}}
\nc{\bbV}{{\mathbb V}}
\nc{\bbW}{{\mathbb W}}
\nc{\bbX}{{\mathbb X}}
\nc{\bbY}{{\mathbb Y}}
\nc{\bbZ}{{\mathbb Z}}
\nc{\calA}{{\mathcal A}}
\nc{\calB}{{\mathcal B}}
\nc{\calC}{{\mathcal C}}
\nc{\calD}{{\mathcal D}}
\nc{\calE}{{\mathcal E}}
\nc{\calF}{{\mathcal F}}
\nc{\calG}{{\mathcal G}}
\nc{\calH}{{\mathcal H}}
\nc{\calI}{{\mathcal I}}
\nc{\calJ}{{\mathcal J}}
\nc{\calK}{{\mathcal K}}
\nc{\calL}{{\mathcal L}}
\nc{\calM}{{\mathcal M}}
\nc{\calN}{{\mathcal N}}
\nc{\calO}{{\mathcal O}}
\nc{\calP}{{\mathcal P}}
\nc{\calQ}{{\mathcal Q}}
\nc{\calR}{{\mathcal R}}
\nc{\calS}{{\mathcal S}}
\nc{\calT}{{\mathcal T}}
\nc{\calU}{{\mathcal U}}
\nc{\calV}{{\mathcal V}}
\nc{\calW}{{\mathcal W}}
\nc{\calX}{{\mathcal X}}
\nc{\calY}{{\mathcal Y}}
\nc{\calZ}{{\mathcal Z}}
\nc{\scrA}{{\mathscr A}}
\nc{\scrB}{{\mathscr B}}
\nc{\scrC}{{\mathscr C}}
\nc{\scrD}{{\mathscr D}}
\nc{\scrE}{{\mathscr E}}
\nc{\scrF}{{\mathscr F}}
\nc{\scrG}{{\mathscr G}}
\nc{\scrH}{{\mathscr H}}
\nc{\scrI}{{\mathscr J}}
\nc{\scrJ}{{\mathscr I}}
\nc{\scrK}{{\mathscr K}}
\nc{\scrL}{{\mathscr L}}
\nc{\scrM}{{\mathscr M}}
\nc{\scrN}{{\mathscr N}}
\nc{\scrO}{{\mathscr O}}
\nc{\scrP}{{\mathscr P}}
\nc{\scrQ}{{\mathscr Q}}
\nc{\scrR}{{\mathscr R}}
\nc{\D}{{\on{D}}}
\nc{\Div}{{\on{Div}}}
\nc{\Perv}{{\on{Perv}}}
\nc{\bnu}{{\bar{ \nu}}}
\nc{\olO}{\bar{\calO}}
\nc{\al}{{\alpha}} 
\nc{\be}{{\beta}}
\nc{\ga}{{\gamma}} \nc{\Ga}{{\Gamma}}
\nc{\hGa}{\hat{\Gamma}}
\nc{\ve}{{\varepsilon}} 
\nc{\la}{{\lambda}} \nc{\La}{{\Lambda}}
\nc{\om}{\omega} \nc{\Om}{\Omega} 
\nc{\sig}{{\sigma}} \nc{\Sig}{{\Sigma}}
\nc{\dR}{{\mathrm{dR}}}
\nc{\Perf}{{\mathrm{Perf}}}
\nc{\Gm}{{\mathbb{G}_m}}
\nc{\colim}{{\on{colim}}}
\DeclareMathAlphabet{\rhomalpha}{LS1}{stixscr}{m}{n}
\nc{\Spa}{\on{{Spa}}}
\nc{\Spd}{\on{{Spd}}}
\nc{\tnb}{\psi_{\rm tame}}
\nc{\oM}{\overline{{M}}}
\nc{\op}{{\on{op}}}
\nc{\ad}{{\on{ad}}}
\nc{\alg}{{\on{alg}}}
\nc{\Ad}{{\on{Ad}}}
\nc{\Adm}{{\on{Adm}}} \nc{\aff}{{\on{af}}}
\nc{\Aut}{{\on{Aut}}}
\nc{\Bun}{{\on{Bun}}}
\nc{\cha}{{\on{char}}}
\nc{\der}{{\on{der}}}
\nc{\Der}{{\on{Der}}}
\nc{\diag}{{\on{diag}}}
\nc{\End}{{\on{End}}}
\nc{\Fl}{{\calF\!\ell}}
\nc{\Tr}{{\on{Transp}}}
\nc{\TR}{{\calT\!\calR}}
\nc{\Gal}{{\on{Gal}}}
\nc{\Gr}{{\on{Gr}}}
\nc{\Hk}{{\on{Hk}}}
\nc{\rH}{{\on{H}}}
\nc{\Hom}{{\on{Hom}}}
\nc{\IC}{{\on{IC}}}
\nc{\id}{{\on{id}}}
\nc{\Id}{{\on{Id}}}
\nc{\ind}{{\on{ind}}}
\nc{\Ind}{{\on{Ind}}}
\nc{\Lie}{{\on{Lie}}}
\nc{\Pic}{{\on{Pic}}}
\nc{\pr}{{\on{pr}}}
\nc{\Res}{{\on{Res}}}
\nc{\res}{{\on{res}}} \nc{\Sat}{{\on{Sat}}}
\nc{\spc}{{\on{sc}}}
\nc{\drv}{{\on{der}}}
\nc{\sgn}{{\on{sgn}}}
\nc{\Spec}{{\on{Spec}}}\nc{\Spf}{\on{Spf}} 
\nc{\Sph}{\on{Sph}}
\nc{\St}{{\on{St}}}
\nc{\tr}{{\on{tr}}}
\nc{\Mod}{{\mathrm{-Mod}}}
\nc{\Hilb}{{\on{Hilb}}} 
\nc{\Ext}{{\on{Ext}}} 
\nc{\vs}{{\on{Vec}}}
\nc{\ev}{{\on{ev}}}
\nc{\nO}{{\breve{\calO}}}
\nc{\tS}{{\tilde{S}}}
\nc{\spe}{{\on{sp}}}
\nc{\loc}{{\on{loc}}}
\nc{\pre}{{\on{pre}}}
\nc{\co}{\colon}
\nc{\dia}{{\diamondsuit}}
\nc{\nscrR}{{\mathscr{R}^{\on{nr}}}}
\nc{\GL}{{\on{GL}}}
\nc{\Gl}{\on{Gl}} 
\nc{\GSp}{{\on{GSp}}}
\nc{\gl}{{\frakg\frakl}}
\nc{\SL}{{\on{SL}}} 
\nc{\SU}{{\on{SU}}} 
\nc{\SO}{{\on{SO}}}
\nc{\PGL}{{\on{PGL}}}
\nc{\Conv}{{\on{Conv}}}
\nc{\Rep}{{\on{Rep}}}
\nc{\Dom}{{\on{Dom}}}
\nc{\red}{{\on{red}}}
\nc{\act}{{\on{act}}}
\nc{\nr}{{\on{nr}}}
\nc{\ctf}{{\on{ctf}}}
\nc{\str}{{\on{-}}} 
\nc{\os}{{\bar{s}}}
\nc{\oeta}{{\bar{\eta}}}
\nc{\et}{\textup{\'et}}
\nc{\hookto}{\hookrightarrow}
\nc{\longto}{\longrightarrow}
\nc{\leftto}{\leftarrow}
\nc{\onto}{\twoheadrightarrow}
\nc{\lonto}{\twoheadleftarrow}
\nc{\pot}[1]{ [\hspace{-0,5mm}[ {#1} ]\hspace{-0,5mm}] }
\nc{\rpot}[1]{ (\hspace{-0,7mm}( {#1} )\hspace{-0,7mm}) }
\numberwithin{equation}{section}
\begin{document}
	
	\title{Tubular neighborhoods of local models}
	
	\author[I. Gleason, J. Louren\c{c}o]{Ian Gleason, Jo\~ao Louren\c{c}o}

	\address{Mathematisches Institut der Universit\"at Bonn, Endenicher Allee 60, Bonn, Germany}
	\email{igleason@uni-bonn.de}
	
		\address{Universität Münster, Einsteinstrasse 62, Münster, Germany}
	\email{j.lourenco@uni-muenster.de}
	
	\begin{abstract}
		We show that the v-sheaf local models of \cite{SW20} are unibranch. In particular, this proves that the scheme-theoretic local models defined in \cite{AGLR22} are always normal with reduced special fiber, thereby establishing the remaining cases of the geometric part of the Scholze--Weinstein conjecture when $p \leq 3$. Our methods are general, topological and simplify those of \cite{Zhu14} for tamely ramified groups in positive characteristic. As a technical input, we generalize a comparison theorem of nearby cycles of \cite{Hub96} to the v-sheaf setup.
	\end{abstract}

	\maketitle
	\tableofcontents
	
	\section{Introduction}
	
	Local models were introduced in the nineties to study the singularities of Shimura varieties, namely in the works of Chai--Norman \cite{CN90}, de Jong \cite{dJ93} and Deligne--Pappas \cite{DP94}, and have found various applications. The theory was systematized in the book of Rapoport--Zink \cite{RZ96}, via linear algebraic moduli problems. Later, it underwent a significant transformation when Görtz \cite{Gor01,Gor03} embedded their special fibers in certain infinite-dimensional flag varieties. This was subsequently exploited by Faltings, Pappas, Rapoport and Zhu \cite{Fal03,PR08,Zhu14,PZ13} to great effect. More recently, Scholze--Weinstein \cite{SW20} proposed a fully functorial avenue to study local models in mixed characteristic via perfectoid geometry. This program was pursued in \cite{AGLR22}. We use the following notation in the paper.
	
	\begin{definition}\label{defn_LM}
		Let $F$ be a local field, $O$ its ring of integers, and $k$ its residue field. Let $G$ be a reductive connected $F$-group, $\calG$ a parahoric $O$-model of $G$, $\mu$ a geometric conjugacy class of cocharacters, and $E$ its reflex field. We denote by $\calM_{\calG,\mu}$ the v-sheaf given as the closure of $\Gr_{G,\mu} \subset \Gr_{G,E}$ inside the Beilinson--Drinfeld Grassmannian $\Gr_{\calG,O_E}$ of \cite{SW20,FS21}. If $F$ is of positive characteristic or $\mu$ is minuscule, we denote by $\calM_{\calG,\mu}^{\on{sch}}$ the canonical weakly normal\footnote{Recall that a scheme is weakly normal if every finite, birational, universally homeomorphic morphism with reduced source is an isomorphism.} proper $O_E$-scheme representing $\calM_{\calG,\mu}$. 
	\end{definition}
	
	If $F$ is $p$-adic and $\mu$ is minuscule, then there is a unique weakly normal scheme $\calM_{\calG,\mu}^{\on{sch}}$ representing $\calM_{\calG,\mu}$ by \cite[Theorem 1.1]{AGLR22}. If $F$ has positive characteristic, then $\calM_{\calG,\mu}^{\on{sch}}$ is defined as the weakly normal scheme representing $\calM_{\calG,\mu}$ whose generic fiber is a Schubert variety in positive characteristic.
	
	Historically, an important goal has been to show that the special fiber of $\calM_{\calG,\mu}^{\on{sch}}$ is reduced, see \cite{PRS13} for various ad hoc definitions of $\calM_{\calG,\mu}^{\on{sch}}$.
	Görtz \cite{Gor01,Gor03} proved this for split classical groups of PEL type via so-called straightening laws, but the proof does not extend to the general case. An important development is due to Pappas--Rapoport \cite{PR08}, who formulated the coherence conjecture to address reducedness via coherent cohomology of ample line bundles. Finally, Zhu \cite{Zhu14} proved the conjecture for tame groups, by translating the problem to equicharacteristic, and constructing a global Frobenius splitting of the local model compatibly with the special fiber. Recall that a scheme $X$ in characteristic $p$ is Frobenius split if the $\calO_X$-module homomorphism $\calO_X \to \varphi_*\calO_X$ given by Frobenius splits. Note that Frobenius split schemes are necessarily reduced. We refer to \cite{BK07,BS13} for proper introductions to the subject.
	
	Most modern results in the literature concerning the reduced structure of the local models rely on \cite{Zhu14} through reductions and comparisons. Interestingly, the heart of Zhu's proof lies in characteristic $p$. This becomes problematic in the perfectoid perspective of \cite{SW20,AGLR22}, because it is not clear how to work with Frobenius splitting techniques in this context. Fortunately, we have the following crucial fact that allow us to bypass them:
	
	\begin{lemma}\label{lem_unibranch_reduced_sp_fib}
		If $\calM_{\calG,\mu}^{\on{sch}}$ is unibranch, then its special fiber is reduced.
	\end{lemma}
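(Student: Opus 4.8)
The plan is to localise the problem at the generic points of the special fibre $X_s$ of $X:=\calM_{\calG,\mu}^{\on{sch}}$, where $X_s=V(\pi)$ for $\pi$ a uniformizer of $O_E$, and to extract the needed local statement from the unibranch hypothesis together with the fact that $X$ represents the v-sheaf $\calM_{\calG,\mu}$. First I would record the basic structure of $X$: it is flat and proper over $O_E$, being the scheme-theoretic closure of its generic fibre; it is weakly normal by definition; and it is integral, because its generic fibre is the irreducible reduced Schubert variety $\overline{\Gr_{G,\mu}}$. In particular $X_s$ is an effective Cartier divisor on an integral scheme, and, modulo standard depth considerations (which amount to $X$ being Cohen--Macaulay, equivalently $(S_2)$, along $X_s$ --- a point I come back to below), reducedness of $X_s$ is equivalent to the assertion that for every generic point $\xi$ of $X_s$ the local ring $\mathcal O_{X,\xi}$ is a discrete valuation ring in which $\pi$ is a uniformizer.

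The unibranch hypothesis and weak normality give half of this. At such a $\xi$ the ring $A:=\mathcal O_{X,\xi}$ is a one-dimensional local domain, and since $X$ is excellent and unibranch, $\widehat A$ is a domain whose normalization is a complete discrete valuation ring $V$; the inclusion $\widehat A\hookrightarrow V$ is finite, birational, and bijective on spectra. If it is moreover a universal homeomorphism --- for which one must check that no separable residue-field extension is introduced along $X_s$, a point one reads off from the Schubert stratification of the (Witt-vector, resp.\ equal-characteristic) affine flag variety underlying $X_s$ --- then weak normality forces $\widehat A=V$, so $\mathcal O_{X,\xi}$ is a DVR and $X$ is regular along the generic locus of its special fibre. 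Running the same argument at deeper points, together with the Cohen--Macaulayness of $X$ along $X_s$ (obtained either from that of the $\mu$-admissible locus or, once $X_s$ is known to be reduced, from the normality of $X$ --- arranged so as not to be circular), settles the depth point. What then remains is the statement that in the DVR $\mathcal O_{X,\xi}$ the uniformizer $\pi$ of $O_E$ is again a uniformizer --- equivalently, that each irreducible component of $X_s$ occurs with multiplicity one in $\mathrm{div}(\pi)$.

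This multiplicity-one statement is the heart of the matter and, I expect, the main obstacle: it cannot follow from abstract properties of $X$, since a conic bundle over $O_E$ degenerating to a double line is normal (hence weakly normal and unibranch), flat and proper, and has geometrically normal and geometrically connected generic fibre, yet non-reduced special fibre. So one must genuinely use that $X$ represents $\calM_{\calG,\mu}$. I would argue through the identification $X^\dia\cong\calM_{\calG,\mu}$. The v-sheaf $\calM_{\calG,\mu}$ is topologically flat over $\Spd O_E$ --- its reduced special fibre is the $\mu$-admissible locus in the Witt-vector affine flag variety, which is itself topologically flat --- so $X$ has no component contained in $X_s$; and, by the theory of tubular neighbourhoods of local models, the completed local ring $\widehat{\mathcal O}_{X,\xi}$ (a complete DVR) carves out of $\calM_{\calG,\mu}$ a "formal punctured disc" over $\Spd O_E$ whose ramification index is exactly $v_\xi(\pi)$. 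The unibranch hypothesis forces this tube to be connected, and comparing it with the corresponding tubular neighbourhood inside the smooth, geometrically connected generic fibre --- in which the coordinate $\pi$ of $\Spd O_E$ is, by the Beauville--Laszlo/lattice description of $\Gr_{\calG,O_E}$, literally a uniformizer of the formal disc defining the Grassmannian --- pins $v_\xi(\pi)$ to $1$. In equal characteristic this is a direct transversality computation on global Schubert schemes inside the global affine Grassmannian; in mixed characteristic I would transport it to equal characteristic via the v-sheaf comparison of nearby cycles announced in the abstract.

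To summarise the expected division of labour: weak normality together with the unibranch hypothesis reduces the lemma to the regularity of $X$ along the generic locus of $X_s$ --- essentially formal, modulo the residue-field check --- and to the multiplicity-one statement, and it is only for the latter that the specific geometry of $\calM_{\calG,\mu}$, namely topological flatness, the structure of its tubular neighbourhoods, and the lattice presentation of the Beilinson--Drinfeld Grassmannian, has to be invoked. Apart from the multiplicity-one step itself, the subsidiary point I would most want to pin down is that "unibranch" here really is "geometrically unibranch" along $X_s$, so that the weak-normality reduction applies.
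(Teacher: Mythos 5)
Your decomposition of the problem is essentially correct in spirit: the lemma is exactly the assertion that the special fibre of $X:=\calM_{\calG,\mu}^{\on{sch}}$ is $R_0$ plus $S_1$, you correctly isolate the multiplicity-one statement (i.e.\ $R_0$) as the crux, and your conic-bundle example convincingly shows it cannot follow from soft properties of $X$ alone. However, your routes to both halves diverge from the paper's, and the $R_0$ half has a genuine gap.

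For $S_1$ the paper's route is shorter than yours and avoids the circularity you flag. Weak normality plus the unibranch hypothesis makes the normalization a finite, birational, universal homeomorphism with reduced source, hence an isomorphism; so $X$ is already normal, and Serre's criterion gives $X$ satisfies $S_2$. Flatness over the discrete valuation ring $O_E$ (so $\pi$ is a nonzerodivisor) then yields $S_1$ of the special fibre by a one-line depth count, with no need for Cohen--Macaulayness and no circularity. Your detour through completed local rings and normalizations of $\widehat{A}$ proves nothing beyond this, and your appeal to ``Cohen--Macaulayness of $X$ along $X_s$, obtained either from that of the $\mu$-admissible locus or ... from the normality of $X$'' is either redundant (normality already suffices) or circular (as you note yourself).

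For $R_0$, the heart of the matter, you speculate that one must compare ramification indices of tubular neighbourhoods against the lattice presentation of the Beilinson--Drinfeld Grassmannian, possibly transporting to equal characteristic. That is not what is needed and is not carried out. The paper's actual input is much more direct and you do not mention it: by \cite[Theorem 6.16]{AGLR22} the perfection of the special fibre of $X$ equals the $\mu$-admissible locus $\calA_{\calG,\mu}$, and the union of maximal $L^+\calG$-orbits defines a \emph{smooth} open subscheme of $X$ over $O_E$ whose special fibre is dense in $X_s$ (\cite[Corollary 2.14]{Ric16}, \cite[Equation (7.44)]{AGLR22}). Smoothness over $O_E$ forces this open piece of the special fibre to be smooth, hence reduced, over the residue field, and density then gives $R_0$ at every generic point of $X_s$ -- which is precisely your multiplicity-one statement. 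Without this input your argument stalls at the step you yourself identify as the main obstacle. Your residual worry about ``unibranch'' versus ``geometrically unibranch'' is a fair one and the paper's proof does not address it head-on; it is inherited from \cite[Lemma 7.26]{AGLR22}, and in practice the check is made at closed points of the special fibre after base change to an algebraically closed residue field, where the distinction disappears.
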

	
	\begin{proof}
		This is \cite[Lemma 7.26]{AGLR22}, but we repeat the crux of the argument for future reference and the reader's convenience. We know already that the perfection of the special fiber of $\calM_{\calG,\mu}^{\on{sch}}$ equals the so-called $\mu$-admissible locus $\calA_{\calG,\mu}$, see \cite[Theorem 6.16]{AGLR22}. In particular, the union of maximal orbits defines a smooth open of $\calM_{\calG,\mu}^{\on{sch}}$ with dense geometric fibers, see \cite[Corollary 2.14]{Ric16} and \cite[Equation (7.44)]{AGLR22}, so the special fiber satisfies Serre's condition $R_0$. As $\calM_{\calG,\mu}^{\on{sch}}$ is weakly normal per definition, the unibranch assumption in the statement implies normality already. In particular, the special fiber is $S_1$ by the Serre criterion for normality plus flatness. The Serre criterion for reducedness yields our claim.
	\end{proof}
	
	The unibranch property is already amenable to a formulation in terms of perfectoids, because it is topological in nature. Indeed, it suffices to know that the tubular neighborhoods of \cite[Definition 4.38]{Gle22} at all the closed points of the special fiber are connected. In turn, being connected is a cohomological invariant and it is natural to expect that a deeper study of the nearby cycles initiated in \cite{AGLR22} would yield the result. Over a $p$-adic field and for non-minuscule coweights $\mu$, the v-sheaf $\calM_{\calG,\mu}$ does not come from a scheme. Nevertheless, $\calM_{\calG,\mu}$ is a kimberlite, the v-sheaf analogue of a formal scheme, and being unibranch (or topologically normal) still makes sense in this context, see \cite[Definition 4.52]{Gle22}. Our main result is:
	\begin{theorem}\label{thm_normal_LM}
		The kimberlite $\calM_{\calG,\mu}$ is unibranch for all pairs $(\calG,\mu)$ from \Cref{defn_LM}. 
	\end{theorem}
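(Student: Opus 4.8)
The plan is to reduce the unibranch property to connectedness of tubular neighborhoods and to compute the relevant connected components via nearby cycles, thereby replacing the Frobenius splittings of \cite{Zhu14} by a soft, characteristic‑insensitive argument. By \cite[Definition~4.52]{Gle22} and its surrounding discussion, $\calM_{\calG,\mu}$ is unibranch exactly when the tubular neighborhood $\calM_{\calG,\mu}^{/x}$ of \cite[Definition~4.38]{Gle22} is connected for every closed point $x$ of the special fibre; this is invariant under base change of $O_E$ to a completed algebraic closure, so I may assume the residue field algebraically closed, whence $\calM_{\calG,\mu}^{\mathrm{red}} = \calA_{\calG,\mu} = \bigcup_{w \in \Adm(\mu)} \overline{\Fl_{\calG,w}}$ is a connected finite union of Schubert varieties. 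The positive loop group $L^+\calG$ acts on $\calM_{\calG,\mu}$ with the cells $\Fl_{\calG,w}$ as its orbits on the special fibre, and tubular neighborhoods at two points of one orbit are canonically isomorphic; hence it suffices to treat the finitely many base points $e_w$, $w \in \Adm(\mu)$.

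A nonempty spectral space is connected iff $\rH^0$ of its constant sheaf $\Lambda = \bbF_\ell$ is free of rank one. The v‑sheaf generalization of \cite{Hub96} that we isolate as the main technical input should identify $\rH^0(\calM_{\calG,\mu}^{/x},\Lambda)$ with the stalk at $x$ of $\mathcal H^0$ of the nearby‑cycle complex $R\Psi_{\calM_{\calG,\mu}}\Lambda$ over $\Spd O_E$, where $\Lambda$ now denotes the constant sheaf on the generic fibre $\Gr_{G,\mu}$. So the task becomes to show that the constructible sheaf $\mathcal H^0(R\Psi_{\calM_{\calG,\mu}}\Lambda)$ on $\calA_{\calG,\mu}$ is the constant sheaf $\Lambda$. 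Equivalently, in the resolution picture: picking a smooth proper resolution $\pi\colon \widetilde{\calM}_{\calG,\mu}\to\calM_{\calG,\mu}$ over $O_E$ (a convolution local model) and using that $R\Psi$ commutes with proper pushforward while $R\Psi_{\widetilde{\calM}_{\calG,\mu}}\Lambda = \Lambda$ by smoothness, one finds $\mathcal H^0(R\Psi_{\calM_{\calG,\mu}}\Lambda)_{x}$ is computed by $\rH^0(\pi^{-1}(x),\Lambda)$, so the statement amounts to connectedness of the fibres of $\pi$ over closed points of the special fibre — the topological shadow of the equality $\pi_*\calO = \calO$ that \cite{Zhu14} obtains through a global Frobenius splitting.

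To compute the nearby cycles I would invoke the theory of central sheaves. After a shift, $R\Psi$ sends $\IC_\mu$ on $\Gr_{G,\mu}$ to the central sheaf $Z_\mu$ on the affine flag variety of $\calG$, which is perverse and $L^+\calG$‑equivariant and carries a Wakimoto filtration indexed by the weights of $\mu$, by \cite{Zhu14} for tame groups and \cite{AGLR22} for general $\calG$ in the v‑sheaf setting. Perversity and equivariance force the lowest cohomology sheaf of $Z_\mu$ to have no nonzero subsheaf supported on a union of lower‑dimensional $L^+\calG$‑orbits; combined with its restriction to the dense smooth locus, the connectedness of $\calA_{\calG,\mu}$, and the explicit structure of the Wakimoto pieces, this should pin it down to be $\Lambda$. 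When $\mu$ is not minuscule — which, by \Cref{defn_LM}, happens only when $F$ has positive characteristic — the constant sheaf on $\Gr_{G,\mu}$ differs from $\IC_\mu$, and I would bridge the gap by a dévissage over the closure poset of the cells $\Gr^\lambda \subseteq \Gr_{G,\mu}$, using triangulated functoriality of $R\Psi$ and the corresponding statements for the $Z_\lambda$ with $\lambda \le \mu$.

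The main obstacle is the last step: controlling $\mathcal H^0 R\Psi$ of the \emph{constant} sheaf in the presence of a reducible special fibre — i.e.\ showing that the irreducible components of $\calA_{\calG,\mu}$ are glued through each stratum tightly enough that no spurious branch of $\calM_{\calG,\mu}$ arises. The two enabling inputs, namely the v‑sheaf form of Huber's comparison theorem with its compatibility with proper pushforward, and the theory of central sheaves for arbitrary parahoric $\calG$ in the v‑sheaf setting, are precisely where the technical effort concentrates; once they are available, the remaining combinatorics is uniform in the residue characteristic and uses no Frobenius splitting, which is the advertised simplification of \cite{Zhu14}.
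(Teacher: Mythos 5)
Your high-level strategy is the right one and matches the paper in outline: reduce unibranchness of the kimberlite to connectedness of tubular neighbourhoods at closed points of the special fibre, convert this to a statement about stalks of nearby cycles via a v-sheaf extension of Huber's comparison (this is \Cref{thmnearbycycles}, and you correctly identify the algebraicity hypothesis that makes it go), reduce to the Iwahori case, and control the relevant cohomology sheaf using the Wakimoto filtration of the central sheaf. A cleaner choice than your constant sheaf is $\IC_\mu$: since $Rj_{\mu*}$ is perverse left t-exact and Schubert varieties are unibranch, $\calH^{-\langle 2\rho,\mu\rangle}(\IC_\mu)=\bbF_\ell$ is already constant on the generic fibre, so \Cref{lem_conn_comps_cohom} gives $\#\pi_0(\calM^\circledcirc_{\calG,\mu/x})=\dim_{\bbF_\ell}\calH^{-\langle2\rho,\mu\rangle}_x\calZ_\mu$ with no extra dévissage in the non-minuscule case. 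The Iwahori reduction (\Cref{lem_iwahori_conn_fibers}) is also needed rather than optional, since the Wakimoto theory lives on the affine flag variety.

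The genuine gap is precisely the ``main obstacle'' you flag at the end without resolving, and ``perversity, equivariance, connectedness, and the explicit Wakimoto pieces'' do \emph{not} pin the stalks down in higher codimension. The Wakimoto argument controls stalks only on orbits of codimension at most one (\Cref{prop_unibranch_codim_1}): there \Cref{prop_irred_comp_codim_1} guarantees at most two maximal admissible elements above $x$, a choice of Borel makes one Wakimoto factor standard and hence invisible to the stalk, and \Cref{lemma_cohomology_wakimoto} bounds the survivor. In codimension $\geq 2$ there may be many extremal translations above $x$ and no choice of Borel kills enough of them. What the paper actually uses to finish is Serre's condition $S_2$ for the admissible locus (\Cref{cor_adm_locus_S2}), combined with a proof by contradiction: if some tube disconnects, pull back to an \'etale neighbourhood $U$, note that the open locus $W\subset U$ over codimension-$\leq1$ strata is connected by $S_2$, conclude via the codimension-$\leq1$ result that $\on{sp}^{-1}(W)$ lies in a single clopen piece, and deduce that one of the direct summands of $\calZ_\mu|_U$ has support of codimension $\geq2$, contradicting perversity. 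Moreover your assertion that the ``remaining combinatorics is uniform in the residue characteristic and uses no Frobenius splitting'' is too optimistic: the proof of $S_2$ translates, via the Hochster--Huneke graph criterion, to an equicharacteristic Schubert variety and invokes Cohen--Macaulayness there, which is exactly where \Cref{thm_zhu_coh} (and classical Frobenius-split-type results for Schubert varieties in positive characteristic) enter the argument. Finally, your resolution idea is not the route of the paper and does not bypass the gap: connectedness of the fibres of a convolution resolution over deep strata is essentially equivalent to what one wants to prove, the special fibre of a convolution local model need not be smooth, and for non-minuscule $\mu$ a smooth resolution in your sense need not exist.
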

	
	As an immediate corollary, we get the geometric part of the Scholze--Weinstein conjecture in full generality, removing certain exceptions found in \cite[Theorem 7.23]{AGLR22} for $p\leq 3$.
	\begin{corollary}\label{cor_CM_LM_sch}
		If $F=k\rpot{t}$ or $\mu$ is minuscule, the underlying scheme $\calM_{\calG,\mu}^{\on{sch}}$ is normal with reduced special fiber. If $p>2$ or $\Phi_G$ is reduced, $\calM_{\calG,\mu}^{\on{sch}}$ is Cohen--Macaulay with Frobenius split special fiber.
	\end{corollary}
	
	\begin{proof}The first sentence follows from \Cref{thm_normal_LM} and \Cref{lem_unibranch_reduced_sp_fib}. In the last sentence, we must exclude the case $p=2$ and $\Phi_G$ non-reduced as in \cite[Assumption 1.9]{AGLR22}. Our goal following \cite[Conjecture 7.25]{AGLR22} is to identify the special fiber of $\calM_{\calG,\mu}^{\on{sch}}$ with the canonical deperfection $\calA_{\calG,\mu}^{\mathrm{sch}}$ of the $\mu$-admissible locus, which follows from \cite[Theorem 4.1, Corollary 5.9]{FHLR22} in equicharacteristic and \cite[Theorem 3.16]{AGLR22} for minuscule $\mu$. 
	\end{proof}
	\begin{remark}
		Another direct corollary of \Cref{thm_normal_LM} is that moduli of $p$-adic shtukas in the sense of \cite[Definition 2.27]{Gle21} or the more restrictive \cite[Definition 3.2.1]{PR21} are also unibranch. 
	\end{remark}
	
	Let us explain the strategy behind \Cref{thm_normal_LM}. The main idea is that the stalks of the first non-trivial cohomology sheaf of the nearby cycles $R\Psi(\on{IC}_\mu)$ detects the number of connected components of the tubular neighborhoods. In this way, a disconnection of a tubular neighborhood produces a disconnection \'etale locally. This however involves a comparison between the analytic nearby cycles defined in \cite{Sch17} and the formal nearby cycles defined in \cite{Gle22}, see \Cref{thmnearbycycles}. In general, such a result only holds if the analytic nearby cycles are already algebraic, see \Cref{defialgebraicity}. Fortunately, algebraicity of $R\Psi(\on{IC}_\mu)$ was proved in \cite{AGLR22}. We expect this theorem to find broader applications in the study of integral models of local Shimura varieties and moduli of $p$-adic shtukas.
	
	If $\calG$ is Iwahori (enough for our purposes by \Cref{lem_iwahori_conn_fibers} and \cite[Lemma 5.26]{Gle22}), one uses the theory of Wakimoto filtrations in mixed characteristic developed in \cite{ALWY22} to bound the dimension of stalks of $R\Psi(\on{IC}_\mu)$ along orbits in codimension at most $1$ and to prove directly that $\calM_{\calG,\mu}$ is unibranch away from a subset of codimension $2$. To prove unibranchness in deeper strata, one uses the perversity of $R\Psi(\on{IC}_\mu)$ due to \cite{ALWY22} and a combinatorial argument. To deal with codimension $2$ subsets in the special fiber, we observe that $\calA_{\calG,\mu}$ is perfectly $S_2$ (i.e. the perfection of a scheme that satisfies Serre's condition $S_2$).\footnote{Under the conditions of \Cref{cor_CM_LM_sch}, we know that $\calA_{\calG,\mu}^{\mathrm{sch}}$ is Cohen--Macaulay, compare with \cite{HR22}. If $F$ is $p$-adic and $\mu$ is not minuscule, it is reasonable to still expect $\calA^{\mathrm{sch}}_{\calG,\mu}$ to be Cohen--Macaulay.} The $S_2$ property can be neatly expressed combinatorially due to \cite{HH94b}, so it reduces to positive characteristic.
	Morally, the $S_2$ property implies that the normalization map has to be an isomorphism. Unfortunately, that map is not available for kimberlites, so we argue more carefully. We exploit the $S_2$ property and unibranchness on codimension $1$ strata to prove that an \'etale-formal local disconnection of the generic fiber forces a large open subset to specialize to a small stratum contradicting the perversity of $R\Psi(\on{IC}_\mu)$.  
	
	In positive characteristic, we reprove normality for tame groups using $\bbG_{m,k}$-actions, following techniques of Le--Levin--Le Hung--Morra \cite{LLHLM20}, who study the unibranch property for the more singular crystalline local models. The strategy for proving that the local model is unibranch in this case was therefore known to the authors of \cite{LLHLM20}, but they did not seem to know of \Cref{lem_unibranch_reduced_sp_fib}.
	
	\subsection{Acknowledgements} We thank Johannes Anschütz, Ana Caraiani, K\k{e}stutis \v{C}esnavi\v{c}ius, Ulrich Görtz, Thomas Haines, Xuhua He, Daniel Le, Arthur-César Le Bras, Bao Viet Le Hung, Brandon Levin, Dong Gyu Lim, Stefano Morra, Timo Richarz, Mafalda Santos, Peter Scholze, Sug Woo Shin, Zhiyou Wu, Jize Yu, Xinwen Zhu for helpful discussions related to the project. We also owe special thanks to the following people: A.-C. Le Bras for encouraging us to finally read \cite{LLHLM20}; Tom Haines for drawing our attention to \cite[Proposition 8.7]{Hai04}, and supplying an enhancement to the non-minuscule case, see \Cref{rem_irred_x_explicit}; and Peter Scholze for detecting a serious gap in one of our drafts.
	
	This paper was written during stays at Imperial College, Orsay, Max-Planck-Institut für Mathematik, Universität Bonn, and we are thankful for the hospitality of these institutions. The project has received funding (I.G.~via Peter Scholze) from the Leibniz-Preis, (J.L.~via Ana Caraiani) from the European Research Council under the European Union’s Horizon 2020 research and innovation program (grant agreement nº 804176), and (J.L.) from the Max-Planck-Institut für Mathematik.
	
	\section{New proof of Zhu's theorem}
	
	In this section, we establish a particular case of \Cref{thm_normal_LM}, originally due to \cite{Zhu14}, via global methods specific to positive characteristic. Let $k$ be algebraically closed of characteristic $p$ and $G$ a connected reductive $\bbG_{m,k}$-group that splits over the finite étale cover $\bbG_{m,k} \to \bbG_{m,k}$ given by raising to the $e$-th power, with $(e,p)=1$. Let $\calG$ be the $\bbA^1_k$-model of $G$ built out of a parahoric $k\pot{t}$-model of $G_{k\rpot{t}}$ and $G$ via Beauville--Laszlo descent. We regard the Beilinson--Drinfeld Grassmannian $\Gr_{\calG}$ as being an ind-(perfect scheme) defined over the perfection $\bbA^{1,\mathrm{pf}}_k$ of the degree $e$ cover $\bbA^1_k \to \bbA^1_k$ that splits $G$ away from the origin, the local model $\calM_{\calG,\mu}$ given as the closure of $\Gr_{G,\mu}$ inside $\Gr_{\calG}$, and finally $\calM^{\on{sch}}_{\calG,\mu}$ as the canonical weakly normal deperfection of $\calM_{\calG,\mu}$ finitely presented over $\bbA^1_k$. Apologies to the reader are in order for deviating from the notation in \Cref{defn_LM}, which referred to the associated v-sheaves over a complete local ring, but it is not hard to reconcile both perspectives.
	
	We aim to reprove \cite[Theorem 3]{Zhu14}. Thanks to \Cref{lem_unibranch_reduced_sp_fib}, this can proceed along the lines of \cite[Section 3]{LLHLM20}.

	\begin{theorem}[\cite{Zhu14}]\label{thm_zhu_coh}
		The flat projective scheme $\calM_{\calG,\mu}^{\on{sch}}$ is normal with Frobenius split, reduced fiber over $0$.
	\end{theorem}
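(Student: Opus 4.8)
The plan is to reduce the whole statement to the geometric unibranchness of $\calM^{\on{sch}}_{\calG,\mu}$ and to prove the latter by torus-equivariant local analysis, in the spirit of \cite[Section~3]{LLHLM20}. First I would record the reduction. Assume $\calM^{\on{sch}}_{\calG,\mu}$ is geometrically unibranch. It is weakly normal by construction and irreducible, being the closure of the irreducible $\Gr_{G,\mu}$, so, as in the proof of \Cref{lem_unibranch_reduced_sp_fib}, geometric unibranchness forces its (finite, birational, universally homeomorphic) normalization to be an isomorphism, whence $\calM^{\on{sch}}_{\calG,\mu}$ is normal. Using flatness over $\bar\bbA^1_k$ and the $R_0$-property of the fiber over $0$ (\cite[Cor.~2.14]{Ric16}, \cite[(7.44)]{AGLR22}), that fiber is then reduced, hence coincides as a closed subscheme of $\Fl_\calG$ with the reduced union $\bigcup_{w\in\Adm(\mu)}\overline{\Fl}_w$ of Schubert varieties --- its underlying space being $\calA_{\calG,\mu}$ by the equicharacteristic analogue of \cite[Thm.~6.16]{AGLR22} --- which is Frobenius split, being a union of Schubert varieties compatibly split in $\Fl_\calG$ by the classical theory in positive characteristic (\cite[Thms.~4.1, 5.3]{FHLR22}). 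Together with flatness and projectivity, which hold by construction, this is the full content of \Cref{thm_zhu_coh}.

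To prove geometric unibranchness, consider the commuting actions on $\calM^{\on{sch}}_{\calG,\mu}$ of the maximal torus $T\subseteq G$, of $\Gm^{\mathrm{rot}}$ by loop rotation, and of $\Gm^{\mathrm{dil}}$ through dilations of $\bar\bbA^1_k$; all of these preserve $\calM^{\on{sch}}_{\calG,\mu}$. The non-unibranch locus $Z\subseteq\calM^{\on{sch}}_{\calG,\mu}$ is closed by excellence and stable under this big torus. Over $\bar\bbA^1_k\setminus\{0\}$, where $\calG\cong G$, the local model is a deperfection of a relative affine Schubert variety, hence normal by \cite{FHLR22}, so $Z$ is contained in the fiber over $0$; in particular $Z$ is a proper torus-scheme, so if $Z\neq\emptyset$ it contains a torus-fixed point. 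These fixed points form a finite set, all of the form $t^w$ with $w\in\Adm(\mu)$, so it suffices to prove $\calM^{\on{sch}}_{\calG,\mu}$ is geometrically unibranch at each $x=t^w$.

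Fix $x=t^w$. I would produce a $(T\times\Gm^{\mathrm{rot}}\times\Gm^{\mathrm{dil}})$-stable affine chart $\calU_x$ of $\calM^{\on{sch}}_{\calG,\mu}$ containing $x$, cut out inside an appropriate big cell of the Beilinson--Drinfeld affine flag variety $\Gr_\calG$, together with a cocharacter of the big torus under which the induced action on $\calU_x$ is contracting with $x$ as its only fixed point --- the dilation collapsing the $\bar\bbA^1_k$-direction, a regular component of the torus collapsing the unipotent directions onto $x$. By the contraction principle for such graded affine $k$-schemes (compare \cite{LLHLM20} and \cite{Gle22}), $\calU_x$ is geometrically unibranch at $x$ as soon as $\calU_x\setminus\{x\}$ is connected. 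But $\calU_x$ is a nonempty open of the irreducible scheme $\calM^{\on{sch}}_{\calG,\mu}$, which maps dominantly to $\bar\bbA^1_k$; hence $\calU_x$ is irreducible and its locus over $\bar\bbA^1_k\setminus\{0\}$ is a nonempty, hence dense, irreducible open not containing $x$. Therefore $\calU_x\setminus\{x\}$ contains a dense irreducible open, so it is irreducible and a fortiori connected, which yields geometric unibranchness at $x$. (One could alternatively reduce to $\calG$ of Iwahori level from the start, but the argument is uniform in the parahoric.)

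The soft topology in the last two paragraphs --- closedness and torus-stability of $Z$, irreducibility of $\calU_x\setminus\{x\}$ --- is routine; the real work lies in the two inputs it rests on. First, the construction of the contracting affine charts $\calU_x$ and the verification that the loop-rotation, torus and dilation actions combine into a genuine contracting action with $x$ as unique fixed point; concretely this amounts to a torus-equivariant analysis of the completed local ring of $\calM^{\on{sch}}_{\calG,\mu}$ at $t^w$ via the structure theory of the Beilinson--Drinfeld affine flag variety of the tame parahoric $\calG$ over $\bar\bbA^1_k$, which is where one transposes the methods of \cite{LLHLM20}. Second, the contraction principle identifying geometric unibranchness at a cone point with connectedness of the punctured chart, which is of the same nature as the tubular-neighborhood computations of \cite{Gle22}. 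I expect both to go through without essential new difficulties.
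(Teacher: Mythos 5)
Your overall strategy — reduce to geometric unibranchness, then deduce it from a contracting torus action on an affine chart of $\calM^{\on{sch}}_{\calG,\mu}$ around each fixed point — is the same as the paper's, which follows \cite{LLHLM20}. There are two issues worth flagging.

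First, the ``contraction principle'' as you state it is false: connectedness of the punctured chart $\calU_x\setminus\{x\}$ does \emph{not} imply unibranchness at $x$, even in the presence of a contracting $\bbG_m$-action with $x$ the unique fixed point. Take $\calU_x=V(yz)\subset\bbA^3_k$ with $\bbG_m$ acting by scaling: the punctured cone is connected (the two planes meet along the whole $x$-axis), yet the local ring at the origin has two minimal primes. The correct criterion is the one in \Cref{lem_LLHLM_unibranch}: \emph{irreducibility} of the chart, which forces the normalization to be irreducible, and then the Bia\l{}ynicki-Birula retraction shows the fixed fiber of the normalization is connected, hence a point. You do establish irreducibility of $\calU_x$ (from irreducibility of $\calM^{\on{sch}}_{\calG,\mu}$), so the substance of your argument survives — but you should go directly from irreducibility to unibranchness via \Cref{lem_LLHLM_unibranch} rather than through the false intermediate criterion.

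Second, and more seriously, the construction of the contracting affine chart $\calU_x$ in the Beilinson--Drinfeld Grassmannian $\Gr_\calG$, and the verification that some coweight of $S\times\bbG_m^{\mathrm{rot}}$ contracts it to $x$, is exactly the technical content of \Cref{lemma_chi_w_existence} and \Cref{lemma_mult_action_global_grass}, which you defer as ``expected to go through without essential new difficulties.'' The globalization in \Cref{lemma_mult_action_global_grass} is the genuinely delicate step: the paper gets around the absence of a readily available big cell in $\Gr_\calG$ by a finiteness argument with reduced words (every sufficiently large Iwahori--Weyl element lies above $w$, so a large enough Schubert truncation has all components of its special fiber through $w$), combined with \cite{Ric19a,HR19} to get openness of the attractor. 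Without an explicit replacement for this, your proof has a gap precisely at its core. Separately, your detour through the closed ``non-unibranch locus $Z$'' and its torus-stability is unnecessary (the paper instead uses $L^+\calG$-equivariance after treating the $S\times\bbG_m$-fixed points), and the appeal to excellence needs care since the base $\bar\bbA^1_k$ is not Noetherian. Finally, your derivation of Frobenius splitness directly from the identification of the reduced special fiber with the deperfected admissible locus and \cite{FHLR22} is slightly more direct than the paper's route through the flat closure and \cite{PR08}, and is fine.
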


	A special feature of tame groups in equicharacteristic is that $\Gr_{\calG}$ carries the so-called rotation $\bbG_{m,k}^{\on{pf}}$-action which lifts the $e$-th power of the natural one on the base $\bbA^{1,\mathrm{pf}}_{k}$, see \cite[Section 5]{Zhu14}. 
	Also, we can regard a maximal $F$-split torus $S\subset G$ as defined over $k$ up to unique isomorphism, and we get a natural action of $S^{\on{pf}}$ on $\Gr_{\calG}$ linear over $\bbA^{1,\mathrm{pf}}_{k}$. For any coweight $\chi$ of $S \times \bbG_{m,k}$, the induced $\bbG_{m,k}^{\on{pf}}$-action on $\Gr_{\calG}$ is Zariski locally linearizable in the sense of \cite{Ric19a}. This is seen by reduction to $\GL_n$, where we reason via lattices as in \cite[Lemma 3.3]{HR21}.  Now, the attractor $\Gr_{\calG}^+$ exists by \cite[Theorem 1.8]{Ric19a} and \cite[Theorem 2.1]{HR21}, and is representable by a disjoint union of locally closed sub-ind-schemes. By compactifying $\Gr_{\calG}$ to $\bbP^{1,\mathrm{pf}}_k$ (simply extend $\calG$ further to a parahoric $\bbP^1_k$-group scheme, see \cite[Définition 4.2.8]{Lou19}), we see by \cite[Lemma 1.11]{Ric19a} that the attractor $\Gr_{\calG}^+$ maps surjectively to $\Gr_{\calG}$ if $\chi$ is contracting on $\bbA^{1,\mathrm{pf}}_{k}$. We denote by $\Fl_{\calG}$ the fiber of $\Gr_{\calG}$ over $0$.

	\begin{lemma}\label{lemma_chi_w_existence}
		For every $S$-fixed point $w \in \Fl_{\calG}$, there exists $\chi_w \colon \bbG_{m,k} \to S \times \bbG_{m,k}$ such that $w \in \Fl_{\calG}^0$ is isolated and the connected component of $\Fl_{\calG}^+$ containing $w$ is open in $\Fl_{\calG}$.
	\end{lemma}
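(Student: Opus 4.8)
The plan is to reduce to the base point $e \in \Fl_\calG$ and then to take for $\chi_w$ a sufficiently scaled ``affine anti-dominant regular'' coweight, chosen so that the attractor of $w$ becomes the big open cell of $\Fl_\calG$. Since $G$ is quasi-split over $k\rpot{t}$, the centralizer $Z_G(S)$ is a maximal torus, so the $S$-fixed locus of $\Fl_\calG$ is discrete and forms a single orbit under the Iwahori--Weyl group $\tilde W$, acting through lifts $n_w \in LG(k)$; write $w = n_w \cdot e$. Conjugation by $n_w$ carries a coweight $\chi_e$ of $S \times \bbG_{m,k}$ to another such coweight $\chi_w$, with unchanged rotation component: one uses that $n_w$ normalizes $S$ and that the rotation torus acts trivially on the constant torus $S \subset LG$, so that only the $S$-component is affected, and that only by a shear coming from the translation part of $n_w$. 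The induced automorphism $n_w$ of $\Fl_\calG$ intertwines the $\chi_e$- and $\chi_w$-actions and sends $e$ to $w$, hence carries $\Fl_\calG^{\chi_e}$ to $\Fl_\calG^{\chi_w}$ and the $\chi_e$-attractor of $e$ onto the $\chi_w$-attractor of $w$. It therefore suffices to produce a $\chi_e$ for which $e$ is isolated in $\Fl_\calG^{\chi_e}$ and its attractor component is open.

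For $w = e$ I would take $\chi_e = (\chi_0, -N)$, with $\chi_0 \in X_*(S)$ regular anti-dominant and $N$ a positive integer exceeding every $|\langle \alpha, \chi_0\rangle|$, $\alpha$ ranging over the (relative) roots. A short check in the relative affine root system then gives two things: (i) $\langle \beta, \chi_e\rangle \neq 0$ for every affine root $\beta$; and (ii) $\langle \beta, \chi_e\rangle > 0$ for every affine root $\beta$ occurring in the Lie algebra of the opposite Iwahori $I^-$, i.e.\ for $\beta = \alpha + n\delta$ with $n < 0$, or $n = 0$ and $\alpha$ negative. From (i) we obtain $\Fl_\calG^{\chi_e} = \Fl_\calG^{S \times \bbG_{m,k}} = \Fl_\calG^S$, which is discrete, so $e$ is isolated in $\Fl_\calG^{\chi_e}$.

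For the attractor, recall that the big cell $\mathring{\Fl}_\calG := I^- \cdot e$ is open in $\Fl_\calG$ and, being a free $I^-$-orbit, is $(S \times \bbG_{m,k})$-equivariantly isomorphic to a product $\prod_\beta U_\beta \cong \bbA^\infty$ indexed by the affine roots $\beta$ of $I^-$, with the torus acting linearly of weight $\beta$ on the $\beta$-th factor, so that $\chi_e$ acts there with weight $\langle\beta,\chi_e\rangle$. By (ii) the coweight $\chi_e$ acts on $\mathring{\Fl}_\calG$ with all weights strictly positive, and $\mathring{\Fl}_\calG$ is $\chi_e$-stable since $\chi_e$ normalizes $I^-$. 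Hence every point of $\mathring{\Fl}_\calG$ flows to $e$ as $t \to 0$, so $\mathring{\Fl}_\calG \subseteq \Fl_\calG^+$; being connected and containing $e$, it lies in the attractor $\Fl_{\calG,e}^+ \subseteq \Fl_\calG^+$ of the fixed point $e$. Conversely, if $x \in \Fl_{\calG,e}^+$ then $\chi_e(t)\cdot x$ lands in the open set $\mathring{\Fl}_\calG$ for $t$ near $0$, and $\chi_e$-stability of $\mathring{\Fl}_\calG$ forces $x \in \mathring{\Fl}_\calG$. Thus $\Fl_{\calG,e}^+ = \mathring{\Fl}_\calG$; being open in $\Fl_\calG$ it is clopen in $\Fl_\calG^+$, and being connected it is precisely the connected component of $\Fl_\calG^+$ through $e$. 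Combined with the reduction above, this proves the lemma.

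The part I expect to require the most care is the loop-group input, especially for a general parahoric: that $I^- \cdot e$ is open in the twisted, perfect ind-scheme $\Fl_\calG$ with the asserted $\bbG_{m,k}^{\on{pf}}$-linear structure, together with the bookkeeping of relative affine roots guaranteeing that $\chi_e$ really makes $e$ attracting with no accidental fixed points. The representability and local-closedness of the attractor are already granted in the preceding discussion, and the quasi-splitness of $G$ over $k\rpot{t}$ (which makes $\Fl_\calG^S$ discrete) is standard.
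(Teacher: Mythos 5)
Your proposal reaches the right conclusion but by a different route than the paper, and while the overall shape of the argument is sound, there are two points where it is thinner than you acknowledge.

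The paper simply invokes the Kac--Moody structure on $S \times \bbG_{m,k}$ from \cite[Définition 4.2.1, Lemme 4.2.2]{Lou19}: it takes $\chi_w$ directly in the $w$-conjugate of the anti-dominant facet of the type corresponding to $\calG$ (so $\chi_w$ need not be regular when $\calG$ is not Iwahori), and then $w \cdot L^{--}\calG \cdot e$ is identified with the attractor component by \cite[Corollaire 4.2.11]{Lou19}. You instead reduce to $w = e$ by conjugation, take $\chi_e$ regular anti-dominant with an overwhelming rotation component, and re-derive the big-cell computation by hand. Both choices of $\chi$ work, and the regular one is in some ways conceptually simpler since it makes the entire fixed locus discrete rather than just isolating $w$; but it requires you to reprove exactly the openness and linearization statements that \cite[Corollaire 4.2.11]{Lou19} packages, which you flag at the end as "the part requiring the most care."

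Two concrete gaps: \textbf{(a)} In the conjugation step you assert that $\chi_w := n_w \chi_e n_w^{-1}$ is again a cocharacter of $S \times \bbG_{m,k}$. This is fine when $n_w$ is a constant Weyl representative or a translation by an element of $X_*(S)$; but for ramified $G$ the translation part of $n_w$ lands in $X_*(T)_I$, which only embeds in $X_*(S)_{\bbQ}$, so $\chi_w$ is a priori a rational cocharacter. The fix is easy (take the rotation component $N$ divisible by the ramification degree $e$), but as written the claim that "only the $S$-component is affected, and that only by a shear" silently assumes integrality. The paper avoids this by phrasing everything in terms of the abstract $\tilde W$-action on the apartment, which is manifestly well-defined. \textbf{(b)} You write the big cell as $I^- \cdot e$ (opposite \emph{Iwahori} orbit) rather than $L^{--}\calG \cdot e$ (opposite parahoric orbit). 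These agree as subsets, but that equality $L^{--}\calI \cdot e = L^{--}\calG \cdot e$ in $\Fl_\calG$ is a Birkhoff-type decomposition and is exactly the kind of statement you are leaning on \cite{Lou19} for; in a self-contained version you would need to supply it. Neither point is a fatal flaw, but the honest version of your proof would end up re-deriving a chunk of \cite[Section 4.2]{Lou19}, which is what the paper instead cites in one line.
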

	
	\begin{proof}
		Recall that $S\times \bbG_{m,k}$ supports a Kac--Moody root system, see \cite[Définition 4.2.1, Lemme 4.2.2]{Lou19}. Let $\chi\colon \bbG_{m,k} \to S \times \bbG_{m,k}$ be a coweight lying in the $w$-conjugate of the anti-dominant facet of type corresponding to $\calG$. Then, the connected component of $\Fl_{\calG}^+$ containing $w$ equals the open left translate $w\cdot L^{--}\calG \cdot e \subset \Fl_{\calG}$ of the big cell, see \cite[Corollaire 4.2.11]{Lou19}.
	\end{proof}
	
	Notice that $\chi_w$ is anti-dominant for the Kac--Moody root system, meaning it acts on the variable $t_1$ defining the flag variety of $\Fl_{\calG}$ by negative powers, whereas we expect $\chi_w$ to contract the affine line $\bbA^{1,\mathrm{pf}}_k$ that serves as base of the local model. The change of sign occuring here is explained by the fact that the definition of $\Gr_{\calG}$ involves an auxiliary formal variable $t_2$, so that $r^{-1}t_1-t_2$ and $t_1-rt_2$ define the same Cartier divisor. 
	
	The next task is to globalize the open set of the previous lemma. In the case of the Iwahori model of ${\rm GL}_n$, the desired open neighborhood is constructed explicitly in \cite[Lemma 3.2.7]{LLHLM20}. Instead, we provide an abstract argument.

	\begin{lemma}\label{lemma_mult_action_global_grass}
		Let $\chi_w$ be as in \Cref{lemma_chi_w_existence}. Then, $w \in \Gr_{\calG}^0$ is isolated and the connected component of $\Gr_{\calG}^+$ containing $w$ is open in $\Gr_{\calG}$.
	\end{lemma}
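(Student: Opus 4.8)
The plan is to reduce everything to the special-fibre statement of \Cref{lemma_chi_w_existence}, exploiting that $\chi_w$ contracts the base $\bar{\bbA}^1_k$ onto $0$. Write $\Gr_\calG^0,\Gr_\calG^+$ for the fixed points and attractor of the $\bbG_{m,k}^{\on{pf}}$-action via $\chi_w$, and $\Gr_\calG^{+,Z}$ for the locally closed attractor stratum over a connected component $Z$ of $\Gr_\calG^0$; these exist and stratify $\Gr_\calG^+$ by \cite[Theorem 1.8]{Ric19a} and \cite[Theorem 2.1]{HR19}. First I would note that, since $\chi_w$ is contracting on $\bar{\bbA}^1_k$ with $0$ its fixed point, $\Gr_\calG^0$ is supported over $0$; as the formation of fixed points commutes with base change this gives $\Gr_\calG^0=\Fl_\calG^0$. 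Hence $w$ is isolated in $\Gr_\calG^0$ by \Cref{lemma_chi_w_existence}, so $\{w\}$ is a connected component of $\Gr_\calG^0$ and $\Gr_\calG^{+,w}=\Gr_\calG^{+,\{w\}}$ is a locally closed sub-ind-scheme of $\Gr_\calG$ — this settles the first claim and reduces us to proving this stratum is open.

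For the same reason the attractor commutes with the closed, $\chi_w$-equivariant base change $\Fl_\calG\hookrightarrow\Gr_\calG$, so $\Gr_\calG^{+,Z}\cap\Fl_\calG=\Fl_\calG^{+,Z}$ for every $Z$. Then I would fix the connected component $Y$ of $\Gr_\calG$ containing $w$: its connected components, indexed by $\pi_1(G)$ and built up from the components of the affine Grassmannian of $G$, are irreducible, so $Y$ is irreducible and clopen in $\Gr_\calG$, it contains $\Gr_\calG^{+,w}$, and $Y_0:=Y\cap\Fl_\calG$ is the corresponding, hence irreducible, connected component of $\Fl_\calG$. By \Cref{lemma_chi_w_existence}, $\Gr_\calG^{+,w}\cap\Fl_\calG=\Fl_\calG^{+,w}$ is the big cell $w\cdot L^{--}\calG\cdot e$, which is irreducible and open in $\Fl_\calG$, hence dense open in $Y_0$.

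Next I would produce a dense open attractor stratum inside $Y$. As $\chi_w$ is contracting on $\bar{\bbA}^1_k$, \cite[Lemma 1.11]{Ric19a} shows $\Gr_\calG^+\to\Gr_\calG$ is surjective, so the strata $\Gr_\calG^{+,Z}\cap Y$ cover $Y$. Exhausting $Y$ by irreducible closed subschemes of finite type — for instance the local models $\calM_{\calG,\mu_i}$ for a cofinal family $(\mu_i)$ in the coweight chamber attached to the class of $w$ — each member meets only finitely many strata, so exactly one of them, over some $Z_0$, is dense in it and therefore open in it, being locally closed in an irreducible scheme; two such dense strata in different members would be disjoint opens in an irreducible scheme, so $Z_0$ is independent of the member, and $\Gr_\calG^{+,Z_0}\cap Y$ is dense and open in $Y$.

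Finally I would identify $Z_0$ with $\{w\}$. The stratum $\Gr_\calG^{+,Z_0}$ contains its fixed locus $Z_0$, which lies in $\Fl_\calG\cap Y=Y_0$ by the first step; hence $\Gr_\calG^{+,Z_0}\cap Y_0$ is a nonempty open, thus dense, subset of the irreducible $Y_0$. Since $\Fl_\calG^{+,w}$ is also dense open in $Y_0$, the two meet, so $\Fl_\calG^{+,Z_0}\cap\Fl_\calG^{+,w}\neq\emptyset$; as attractor strata over distinct components of $\Fl_\calG^0$ are disjoint, $Z_0=\{w\}$. Thus $\Gr_\calG^{+,w}=\Gr_\calG^{+,Z_0}\cap Y$ is open in $Y$, hence in $\Gr_\calG$. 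I expect the main obstacle to be precisely this last comparison: exhibiting the generic attractor stratum over $\bar{\bbA}^1_k$ and recognising it as the one specialising to the big cell. The crucial inputs are that an attractor stratum always contains its own fixed locus — which here lies over $0$, so the comparison may be run on $Y_0$ — together with the irreducibility of the relevant connected components of both $\Gr_\calG$ and $\Fl_\calG$; one has to take care that the bounded pieces $\calM_{\calG,\mu_i}$, whose special fibres are typically reducible, enter only through $Y$ itself and never through $Y_0$.
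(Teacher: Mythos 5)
Your high-level plan — produce the open attractor stratum over the "generic" fixed-point component $Z_0$ of $\Gr_\calG$ and then identify $Z_0$ with $\{w\}$ via the special fiber — is close in spirit to the paper's. The observations that $\Gr_\calG^0=\Fl_\calG^0$ (so the first claim follows directly from \Cref{lemma_chi_w_existence}) and that attractors commute with the closed equivariant base change $\Fl_\calG\hookrightarrow\Gr_\calG$ are correct and clean.

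However, there is a genuine gap in the passage "two such dense strata in different members would be disjoint opens in an irreducible scheme, so $Z_0$ is independent of the member." For nested $\chi_w$-stable irreducible closed pieces $X_1\subset X_2$, the dense open stratum $U_2\subset X_2$ is \emph{not} open in $X_1$ (which is closed in $X_2$), and $U_2\cap X_1$ may be a proper subset of $X_1$, possibly empty; there is no contradiction if the generic point of $X_1$ attracts to a different fixed-point component than that of $X_2$. So you have not shown that $\Gr_\calG^{+,Z_0}\cap Y$ is open, and you cannot even define a single $Z_0$. With your choice of exhaustion by the $\calM_{\calG,\mu_i}$ the problem is concrete: the special fiber $\calA_{\calG,\mu_i}$ has irreducible components $\Fl_{\calI,t_{\bar\nu}}$ for $\bar\nu\in\Lambda(\mu_i)$, and a given $S$-fixed $w\in\calA_{\calG,\mu_i}$ need not lie on all of them, so the restriction of the big cell to $(X_j)_0$ is a non-dense open and your final comparison (meeting the big cell) does not go through piece by piece.

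This is exactly the subtlety the paper's proof addresses and that your argument is missing: one must pass to a sub-exhaustion by finite-type $\chi_w$-stable pieces $X$ all of whose special-fiber components contain $w$. This is possible by the finiteness-with-reduced-words argument invoking \cite[Theorem 2.5]{HLR18}. Once that is arranged, every generic point of $X_0$ specializes to $w$, hence lies in the big cell $w\cdot L^{--}\calG\cdot e$, so the nonempty open $U_0$ (where $U$ is the open attracting stratum through the generic point of $X$) meets the big cell, forcing $U$ to be the stratum contracting to $w$. Your proposal correctly flags the generic-stratum comparison as the obstacle, but without the component-containment normalization of $X_0$ the density argument on $Y_0$ cannot be transported down to the individual finite-type members of the exhaustion where openness actually has to be checked.
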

	
	\begin{proof}
		Choose a presentation of a connected component of $\Gr_{\calG}$ containing $w$ by an increasing union of $\bbG_{m,k}^{\on{pf}}$-stable perfect varieties $X$. A simple finiteness argument with reduced words, see also \cite[Theorem 2.5]{HLR18}, shows that every sufficiently large element of the Iwahori--Weyl group is bigger than $w$. Thus, we may and do assume that every irreducible component of the fiber $X_0$ over $0$ already contains the point $w$. Let $U$ be the connected component of $X^+$ containing the generic point of $X$. Since $U \subset X$ is locally closed and $X$ is irreducible, it must be an open subset. Also, its fiber $U_0$ over $0$ is non-empty, as it contains the fixed points $U^0$. As every generic point of $X_0$ specializes to $w$, we see that $U_0$ intersects $w\cdot L^{--}\calG \cdot e$ non-trivially. In particular, $U$ is contracting to $w$.
	\end{proof}
	
	We also need the following helpful criterion to detect the unibranch property.
	
	\begin{lemma}[\cite{LLHLM20}] \label{lem_LLHLM_unibranch}
		Let $X$ be a perfect $k$-variety with monoid $\bbA_{k}^{1,\on{pf}}$-action, such that $X^0=\{x\}$. Then $X$ is unibranch at $x$.
	\end{lemma}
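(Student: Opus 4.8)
The plan is to reduce the statement to a connectedness assertion about the tubular neighborhood (or its algebraic analogue, the punctured formal completion) at $x$, and to use the $\bbA^{1,\on{pf}}_k$-action to contract everything to $x$. Concretely, being unibranch at $x$ means that the strict henselization (equivalently, for perfect $k$-schemes, the completion) of $X$ at $x$ has irreducible spectrum, or what amounts to the same, that $\Spec \calO_{X,x}^h \setminus \{x\}$ is connected. So the first step is to translate "unibranch at $x$" into "the punctured strictly local scheme at $x$ is connected", using that $x$ is a $k$-point and $k$ is algebraically closed (so no issue of residue field extensions or non-trivial branches appearing only over an extension).

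Next, I would exploit the monoid action. The $\bbA^{1,\on{pf}}_k$-action gives, for each point $y$ of $X$, a map $\bbA^{1,\on{pf}}_k \to X$ with $1 \mapsto y$ and $0 \mapsto \lim_{s\to 0} s\cdot y$, which by hypothesis $X^0 = \{x\}$ must be $x$ (provided the limit exists; since $X$ is a perfect variety with monoid action, the action extends over $0$, which is exactly the content of having a $\bbA^{1,\on{pf}}_k$-action rather than a mere $\bbG_{m,k}^{\on{pf}}$-action). Thus every point of $X$ "flows to" $x$. In particular the attractor-type map $\bbA^{1,\on{pf}}_k \times X \to X$ restricts over $\{0\} \times X$ to the constant map to $x$. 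The key consequence is that any open (or locally closed) $\bbA^{1,\on{pf}}_k$-stable subset $V \subseteq X$ containing $x$ must be all of $X$: indeed $V$ contains $s \cdot y$ for all $s$ near $0$ and all $y$, and stability plus the contraction force $y \in V$. This shows $X$ is the only $\bbA^{1,\on{pf}}_k$-stable open neighborhood of $x$, hence $X$ is connected; but more is needed — one needs connectedness of the punctured local scheme.

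For that, I would argue as follows. Suppose $\Spec \calO_{X,x}^h \setminus \{x\} = W_1 \sqcup W_2$ were a non-trivial disconnection into two opens. Pulling back along the action and using that the $\bbA^{1,\on{pf}}_k$-action preserves the closed point $x$ and hence induces an action on $\Spec \calO_{X,x}^h$ (here one uses that the action is Zariski-locally linearizable, so the local ring at $x$ carries a compatible grading/comonoid structure), the disconnection would have to be $\bbG_{m,k}^{\on{pf}}$-stable after possibly swapping the two pieces along the $\bbG_m$-orbits — because each $\bbG_m$-orbit is connected and irreducible, so lands entirely in $W_1$ or $W_2$. Then, extending over $0$: each orbit closure $\overline{\bbG_m \cdot y}$ meets the closed point $x$, so its punctured version is connected and lies in a single $W_i$; letting $y$ vary and using irreducibility of $X$ (a perfect variety, hence the reduced $X$ is irreducible if we are at a point of a single component — and the unibranch question is local, so we may assume $X$ irreducible by restricting to a component through $x$, noting all components pass through $x$ by the contraction), all orbits land in the same $W_i$, contradicting $W_1, W_2 \neq \emptyset$. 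The main obstacle, and the step requiring care, is the passage from the global $\bbA^{1,\on{pf}}_k$-action to an honest grading on the strictly henselian (or complete) local ring $\calO_{X,x}$ and the verification that the disconnection of the punctured spectrum is forced to be homogeneous; this is where one invokes the Zariski-local linearizability of the action and, if needed, a Białynicki-Birula-style argument in the perfect setting as in \cite{Ric19a, HR19}. Alternatively, and perhaps more cleanly, one can cite the relevant statement directly from \cite{LLHLM20} as indicated, since the argument there for crystalline local models of $\GL_n$ is purely formal in the monoid action and goes through verbatim.
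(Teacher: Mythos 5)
Your opening reduction—that ``unibranch at $x$'' is \emph{equivalent} to connectedness of the punctured spectrum of the strict henselization—is false once $\dim X \geq 2$, and this is fatal to the proposed strategy. The forward implication holds (irreducible implies punctured spectrum irreducible, hence connected), but the converse fails: take $A = k[[x,y,z]]/(xy)$, which has two minimal primes (two branches), yet $\Spec A \setminus \{\frakm\}$ is connected because the two components meet along the height-one prime $(x,y)$. This is a general phenomenon (Hartshorne's connectedness theorem for $S_2$ local rings of dimension $\geq 2$), and the local models to which the lemma is applied certainly have dimension $\geq 2$. So even a correct execution of the second half of your plan would only prove connectedness of the punctured tube, not the unibranch property.

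The paper's actual proof, following \cite{LLHLM20}, uses an idea you never introduce: pass to the normalization $\pi\colon Y \to X$. The $\bbA^{1,\on{pf}}_k$-action extends to $Y$ by functoriality of normalization, and $Y^0 = \pi^{-1}(x)$ because $\pi^{-1}(x)$ is a finite $\bbG_m^{\on{pf}}$-stable set, hence pointwise fixed. The contraction map $Y \to Y^0$, $y \mapsto a(0,y)$, then exhibits $Y^0$ as the image of the connected scheme $Y$ (irreducible since $X$ is a variety and normalization preserves irreducibility), so $Y^0 = \pi^{-1}(x)$ is connected, i.e.\ a single point. Since $k$ is algebraically closed, this says $X$ is unibranch at $x$. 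The reformulation via the normalization is precisely what replaces the incorrect punctured-spectrum criterion, and it makes the Bia\l{}ynicki-Birula argument—which you do correctly identify as the right tool—land cleanly. Your secondary difficulty, of making sense of $\bbG_m$-orbits inside $\Spec \calO^{sh}_{X,x}$ (which is not naturally a $\bbG_m$-scheme, since the action does not fix a Zariski neighborhood of $x$), is also circumvented by working globally on $Y$.
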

	
	\begin{proof}
		We explain the idea of \cite[Lemma 3.4.8]{LLHLM20} for the reader's convenience. Note that the $\bbA_{k}^{1,\on{pf}}$-action extends to the normalization $Y$. The closed subspace $Y^0$ is the fiber over $x$. So now the Bia\l{}ynicki-Birula map $Y \to Y^0$ shows that the right side is connected, as $Y$ is irreducible.
	\end{proof}

	\begin{proof}[Proof of \Cref{thm_zhu_coh}]
		Thanks to \Cref{lemma_mult_action_global_grass}, we can produce for any given $S^{\on{pf}}\times \bbG_{m,k}^{\on{pf}}$-stable point $w \in \calM_{\calG,\mu}(k)$, a $\chi_w$-stable open neighborhood $\calN_w$ of $w$ in $\calM_{\calG,\mu}$ such that $\calN_w=\calN_w^+$ and $\calN_w^0=w$. Now, $\calN_w$ is irreducible, so \Cref{lem_LLHLM_unibranch} shows that $\calM_{\calG,\mu}$ is unibranch at $w$. By $L^+\calG$-equivariance, this holds at any closed point of the special fiber of $\calM_{\calG,\mu}$, hence it must be unibranch. 		
		By \Cref{lem_unibranch_reduced_sp_fib}, we conclude that the special fiber of $\calM_{\calG,\mu}^{\on{sch}}$ is reduced.
		We remind the reader that, even though we only introduce nearby cycles in \Cref{section_nearby_cycles}, they are already implicit here, since \Cref{lem_unibranch_reduced_sp_fib} relies on the calculation of the special fiber from \cite[Theorem 6.16]{AGLR22}.
		
		Now, we will not require the global rotation action, and need to work with the scheme-theoretic $\calM_{\calG,\mu}^{\mathrm{sch}}$, so we base change our entire situation to the complete local ring $k\pot{t}$ of $\bbA^1_k$. In order to see that the special fiber is Frobenius split, we must consider the flat closure $\calM_{\calG,\mu}^{\on{fl}}$ inside the ind-scheme $\Gr_{\calG}^{\on{sch}}$, i.e.~before taking perfections. In general, there is a universal homeomorphism $\calM_{\calG,\mu}^{\on{sch}} \to \calM_{\calG,\mu}^{\on{fl}}$, but it is typically only an isomorphism if $p \nmid \pi_1(G_{\mathrm{der}})$. This last condition is necessary and sufficient to avoid certain pathological non-normal Schubert varieties discovered in \cite[Theorem 2.5]{HLR18} that force $\calM_{\calG,\mu}^{\on{fl}}$ to not be normal. However, notice that $\calM_{\calG,\mu}^{\mathrm{sch}}$ does not vary under central extensions by \cite[Proposition 4.16]{AGLR22}, so we may and do assume until the end of the proof that $G_{\mathrm{der}}$ is simply connected, by passing to a z-extension.
		
		If $G_{\mathrm{der}}=G_{\mathrm{sc}}$, we claim that the natural morphism $\calM_{\calG,\mu}^{\on{sch}} \to  \calM_{\calG,\mu}^{\on{fl}}$ is an isomorphism. Since it is a finite universal homeomorphism which is an isomorphism on generic fibers by \cite[Theorem 8]{Fal03}, we may pass to special fibers and check that it becomes a closed immersion by Nakayama's lemma. Indeed, if we have a finite injection $R \to S$ of local rings and we know that $R/I \to S/IS$ is surjective for some ideal $I$ contained in the maximal ideal of $R$, then $R \to S$ is an isomorphism by Nakayama applied to $S/R$. But we know by \cite[Theorem 8.4, Proposition 9.7]{PR08} that Schubert varieties in the scheme-theoretic flag variety are compatibly Frobenius split (thanks to the equality $G_{\mathrm{der}}=G_{\mathrm{sc}}$), hence their unions are Frobenius split and thus weakly normal. In particular, the deperfected admissible locus $X:=\calA_{\calG,\mu}^{\on{sch}}$ identifies with the reduction of the special fiber $Z$ of $\calM_{\calG,\mu}^{\on{fl}}$. On the other hand, $X$ is the weak normalization of the special fiber $Y$ of $\calM_{\calG,\mu}^{\on{sch}}$ which is reduced. We get a sequence of maps \begin{equation}
		X \to Y \to Z
		\end{equation} between separated schemes whose composition is a closed immersion. Hence, the map $X \to Y$ is a closed immersion and a universal homeomorphism of reduced schemes, thus an isomorphism. We conclude that the map on special fibers $Y\to Z$ must be a closed immersion. 
	\end{proof}

	\begin{remark}
		There are two ways in which the results in \cite{Zhu14} seem to differ from \Cref{thm_zhu_coh}:
		\begin{enumerate}
			\item The original \cite[Theorem 3]{Zhu14} refers to the local model $\calM_{\calG,\mu}^{\on{fl}}$ defined via flat closure under the assumption $p \nmid \pi_1(G_{\mathrm{der}})$. During our proof of \Cref{thm_zhu_coh}, we explained this difference and how it relates to \cite[Theorem 2.5]{HLR18}.
			\item The finer \cite[Theorem 6.10]{Zhu14} asserts that the local model is compatibly Frobenius split with its special fiber. We stress that our methods do not yield this stronger claim, as we work over the perfection until we apply the normality result of \cite{PR08} in the special fiber, which is considerably weaker.
		\end{enumerate}
		
	\end{remark}
	
	\section{Combinatorics of admissible sets}
	
	In contrast with the introduction, we no longer assume that $F$ is a local field, but rather a complete discretely valued field with algebraically closed residue field $k$ of positive characteristic. Let $G$ be a connected reductive $F$-group and $\calI$ a Iwahori $O$-model of $G$.

	Fix a quadruple $(G,S,T,B)$ where $S$ is a maximal $F$-split torus whose apartment contains the alcove fixed by $\calI$, $T$ is the maximal torus given as $Z_G(S)$, and $B$ is some $F$-Borel containing $S$. We denote $N=N_G(S)$ and let $\tilde{W}=N(F)/\calT(O)$ be the Iwahori--Weyl group of $G$. It is an extension of $W_0=N(F)/T(F)$ by $T(F)/\calT(O)$. 
	We consider the Kottwitz map $T(F)/\calT(O) \to X_*(T)_I$ to the group of inertia coinvariants of $T$-coweights, inducing a bijection with inverse $\bar \nu  \mapsto t_{\bar \nu}$. Note that, according to the sign conventions of \cite{BT72,BT84}, $t_{\bar \nu}$ acts on the apartment of $S$ by $-\bar\nu$.

	Recall that the Witt flag variety $\Fl_\calI$ of \cite{Zhu17} is an ind-(perfect scheme) by \cite{BS17} stratified in $L^+\calI$-orbits indexed by $\tilde{W}$. The reduction of the local model $\calM_{\calI,\mu}$ embeds in $\Fl_\calI$ and it was shown in \cite[Theorem 6.16]{AGLR22} that it coincides with the $\mu$-admissible locus $\calA_{\calI,\mu}$. This perfect $L^+\calI$-stable subscheme is defined via the $\mu$-admissible set ${\rm Adm}(\mu)$ of Kottwitz--Rapoport \cite{KR00}: the lower poset generated by $t_{\bar \nu}$ with $\bar \nu$ running over the $\hat{T}^I$-weights $\Omega(\mu)$ of the highest weight $\hat{G}$-representation $V_\mu$. Its maximal elements form a $W_0$-orbit \cite[Theorem C]{Hai18} and we denote by $\Lambda(\mu) \subset X_*(T)_I$ its image under the Kottwitz map. 
	
	\begin{lemma}[\cite{Hai04}]\label{prop_irred_comp_codim_1}
		Suppose $x \in {\rm Adm}(\mu)$ has colength 1. Then, there are at most two distinct $\bar\nu_i \in \La(\mu)$ such that $x \leq t_{\bar\nu_i}$ for $i = 1, 2$.
	\end{lemma}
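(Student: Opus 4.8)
The plan is to reduce the statement to the elementary fact that a line meets a sphere in at most two points, the sphere being the common ``radius sphere'' of the single $W_0$-orbit $\La(\mu)$.

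Write $d := \ell(t_{\bar\nu})$ for $\bar\nu \in \La(\mu)$; this is well defined because $\La(\mu)$ is one $W_0$-orbit and $\ell(t_\bullet)$ is $W_0$-invariant, and it equals the dimension of $\calA_{\calI,\mu}$ (the $L^+\calI$-orbit of $t_{\bar\nu}$ being a cell of dimension $\ell(t_{\bar\nu})$), so ``$x$ has codimension $1$'' means $\ell(x)=d-1$. First I would observe that if $x\le t_{\bar\nu}$ for $\bar\nu\in\La(\mu)$ then $x<t_{\bar\nu}$ and $\ell(t_{\bar\nu})=\ell(x)+1$, so $x\lessdot t_{\bar\nu}$ is a covering relation, whence $t_{\bar\nu}=x\,r_{\bar\nu}$ for a reflection $r_{\bar\nu}\in\tilde W$. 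Projecting to $W_0=\tilde W/X_*(T)_I$ kills $t_{\bar\nu}$, so the image of $r_{\bar\nu}$ in $W_0$ is $\bar x^{-1}$; as affine reflections have nontrivial (reflection) linear part, $\bar x$ must itself be a reflection, say $\bar x=s_\al$ for a relative root $\al$, and then every $r_{\bar\nu}$ is an affine reflection $s_{\al,m_{\bar\nu}}$ in the $\al$-parallel class of hyperplanes. The key point is that $\al$ depends only on $x$, not on $\bar\nu$.

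Next I would pin down the $\bar\nu$'s. Given $\bar\nu_1,\bar\nu_2\in\La(\mu)$ with $x\le t_{\bar\nu_i}$, writing $t_{\bar\nu_i}=x\,s_{\al,m_i}$ gives
\[
t_{\bar\nu_1-\bar\nu_2}=t_{\bar\nu_1}t_{\bar\nu_2}^{-1}=x\,\bigl(s_{\al,m_1}s_{\al,m_2}\bigr)\,x^{-1}=x\,t_{(m_1-m_2)\al^\vee}\,x^{-1}=t_{s_\al((m_1-m_2)\al^\vee)}=t_{(m_2-m_1)\al^\vee},
\]
so $\bar\nu_1-\bar\nu_2\in\Z\al^\vee$. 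Thus all the $\bar\nu\in\La(\mu)$ with $x\le t_{\bar\nu}$ lie on a single affine line of direction $\al^\vee$ inside $X_*(T)_I\otimes\bbR$, and they remain pairwise distinct there since $\al^\vee$ is non-torsion. Finally, choose a $W_0$-invariant positive definite inner product on $X_*(T)_I\otimes\bbR$ (average an arbitrary one over the finite group $W_0$): all points of the orbit $\La(\mu)$ then have the same norm, hence lie on one sphere. The $\bar\nu$'s in question therefore lie on a line and on a sphere, so there are at most two of them, which is the assertion.

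I do not expect a serious obstacle here: the whole argument is this reduction, and the geometric step ``three collinear points on a sphere is impossible'' is immediate once it is in place. The only matters needing care are the bookkeeping inside $\tilde W$ (the length-zero subgroup $\Omega$ and possible torsion in $X_*(T)_I$) and the precise covering-relation description of the Bruhat order; note also that the argument is uniform in $\mu$, in particular it does not require $\mu$ minuscule.
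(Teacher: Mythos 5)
Your proof is correct and follows essentially the same route as the paper's (which itself is largely a pointer to the last paragraph of the cited result of Haines): codimension~$1$ forces a covering relation $t_{\bar\nu} = x\,r_{\bar\nu}$ with $r_{\bar\nu}$ an affine reflection whose linear part is the fixed reflection $\bar x = s_\alpha$, hence the $\bar\nu$'s lie on a line of direction $\alpha^\vee$, and the $W_0$-orbit $\Lambda(\mu)$ lies on a sphere, so a line can meet it in at most two points. You have merely unpacked the details (the projection to $W_0$, the explicit computation showing $\bar\nu_1-\bar\nu_2\in\Z\alpha^\vee$, the torsion caveat in $X_*(T)_I$) that the paper delegates to the reference.
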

	\begin{proof}
		We proceed as in the last paragraph of \cite[Proposition 8.7]{Hai04}. There is an affine reflection $s_i$, such that $x=s_it_{\bar\nu_i}$ due to the colength hypothesis. Mapping $x$ to the group $W_0$ of euclidean transformations, we see that the fixed hyperplanes of the $s_i$ must all be parallel, so the $\bar{ \nu}_i$ lie in a $\bbR$-line. But they all have the same length by maximality, and a $\bbR$-line cannot cross a $\bbR$-sphere more than twice.
	\end{proof}
	
	The next remark was explained to us by Haines. Although it will not be needed, we leave it for the interested reader.
	
	\begin{remark}[Haines] \label{rem_irred_x_explicit}\cite[Proposition 8.7]{Hai04} can be adapted to describe the set ${\rm Irr}(x)\subset \La(\mu)$ whose translations lie above $x$ with colength $1$.  Write $x = t_{\bar\nu} s_\beta$ for some $\bar\nu \in \Omega(\mu)$ and some positive root $\beta $ of the échelonnage root system $\Sigma$, see \cite[1.4.1]{BT72}.
		\begin{enumerate}
			\item[(a)] If $x < s_\beta x$, then ${\rm Irr}(x) = \{ \bar\nu, s_\beta(\bar\nu)\}$.
			\item[(b)] If $s_\beta x < x$, then ${\rm Irr}(x) = \{ \bar\nu + \beta^\vee, s_\beta(\bar\nu + \beta^\vee)\}$. If $\mu$ is minuscule with respect to $\Sigma$, this does not occur.
		\end{enumerate}
	\end{remark}

	Another important tool for us will be the $S_2$ property of Serre for $\mu$-admissible loci. 	
	We say that a perfect $k$-scheme $X$ perfectly of finite presentation is perfectly $S_2$ if it is the perfection of an $S_2$ finite type scheme. 
	
	\begin{lemma}\label{lem_S2ification}
		If $X$ is equidimensional, there is up to isomorphism a unique perfectly finite birational morphism $X^{S_2} \to X$ such that $X^{S_2}$ is perfectly $S_2$ and identifies with the right side away from codimension $2$. In particular, $X$ is perfectly $S_2$ if and only if some (equiv. every) weakly normal finite type deperfection $X_0$ is $S_2$.
	\end{lemma}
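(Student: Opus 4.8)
The plan is to reduce everything to the unique weakly normal finite-type deperfection $X_0$ of $X$, to carry out there the classical $S_2$-ification, and to re-perfect; the one genuinely delicate point will be that the abstract hypothesis ``perfectly $S_2$'' must be shown to force $X_0$ itself to be $S_2$. Since perfection is a homeomorphism on underlying spaces respecting specializations, hence preserves codimension, and since a morphism is perfectly finite precisely when it is the perfection of a finite one, all topological and relative-finiteness assertions may be checked after deperfecting. So fix the weakly normal finite-type deperfection $X_0$ of $X$; it is unique up to unique isomorphism, reduced (hence $S_1$), equidimensional (as $X$ is), and excellent and catenary, being of finite type over $k$. Because $(S_2)$ holds automatically at a point $x$ with $\dim\calO_{X_0,x}\le 1$ (by $S_1$), the $S_2$-locus $U_0\subseteq X_0$, which is open by excellence, has complement of codimension $\ge 2$.

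For existence, let $\nu\colon X_0^\nu\to X_0$ be the normalization (finite, since $X_0$ is Nagata) and $j_0\colon U_0\hookrightarrow X_0$ the inclusion. A section of $\calO_{U_0}$ lies in $\mathrm{Tot}(\calO_{X_0})$ because $U_0$ contains all generic and height-one points, and its pullback to $\nu^{-1}(U_0)$ — whose complement in the normal scheme $X_0^\nu$ has codimension $\ge 2$ — extends by normal Hartogs; hence $\calA_0:=j_{0*}\calO_{U_0}$ is a coherent sheaf of reduced $\calO_{X_0}$-algebras caught between $\calO_{X_0}$ and $\nu_*\calO_{X_0^\nu}$ inside $\mathrm{Tot}(\calO_{X_0})$. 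Then $X_0^{S_2}:=\underline{\Spec}_{X_0}\calA_0$, with its projection $\pi_0$, is the classical $S_2$-ification of $X_0$: the map $\pi_0$ is finite, birational, and an isomorphism over $U_0$, and $X_0^{S_2}$ is $S_2$ (see \cite{HH94b}). Set $X^{S_2}:=(X_0^{S_2})^{\on{pf}}$ and $\pi:=\pi_0^{\on{pf}}$; this is a perfectly finite birational morphism from a perfectly $S_2$ scheme which identifies with $X$ over the open $U_0^{\on{pf}}$ with codimension-$\ge2$ complement.

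For uniqueness I first record a Hartogs lemma: if $Y$ is equidimensional and perfectly $S_2$ and $W\subseteq Y$ is open with $\mathrm{codim}(Y\setminus W)\ge2$, then $\calO_Y\xrightarrow{\sim}j_*\calO_W$. Indeed, deperfect $Y$ to an equidimensional $S_2$ finite-type scheme $Y_0$ (which exists since $Y$ is perfectly $S_2$) and $W$ to an open $W_0\subseteq Y_0$ with codimension-$\ge2$ complement; by the standard characterization of $(S_2)$ via Hartogs across codimension-$\ge2$ closed subsets, $\calO_{Y_0}\xrightarrow{\sim}j_{0*}\calO_{W_0}$; since $W_0\hookrightarrow Y_0$ is quasi-compact and quasi-separated, pushforward commutes with the filtered colimit along Frobenius computing $\calO_Y$ and $\calO_W$, and the claim follows. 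Now let $g\colon Y\to X$ be any perfectly finite birational morphism with $Y$ equidimensional and perfectly $S_2$ that is an isomorphism over some open with codimension-$\ge2$ complement. Intersecting the isomorphism loci of $g$ and $\pi$ yields an open $W\subseteq X$ with codimension-$\ge2$ complement over which both restrict to isomorphisms; since $g$ and $\pi$ are finite and preserve codimension, applying the Hartogs lemma to $Y$ and to $X^{S_2}$ and pushing forward along $g$, resp.\ $\pi$, gives $g_*\calO_Y\cong (j^X)_*\calO_W\cong\pi_*\calO_{X^{S_2}}$ as $\calO_X$-algebras, whence $Y\cong X^{S_2}$ over $X$, both sides being the relative spectrum of their pushforward algebra.

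Finally, the ``in particular''. If $X_0$ is $S_2$ then $X=X_0^{\on{pf}}$ is perfectly $S_2$ by definition. Conversely, if $X$ is perfectly $S_2$, then $\id_X$ is a perfectly finite birational morphism from a perfectly $S_2$ scheme which is vacuously an isomorphism away from codimension $2$, so the uniqueness just proved forces $\pi=\pi_0^{\on{pf}}$ to be an isomorphism. In the finite-birational extension of reduced rings $\calO_{X_0}\subseteq\calO_{X_0^{S_2}}\subseteq\mathrm{Tot}(\calO_{X_0})$, reducedness makes $\calO_{X_0^{S_2}}\hookrightarrow\calO_{X_0^{S_2}}^{\on{pf}}=\calO_{X_0}^{\on{pf}}$ injective, so any local section $b$ of $\calO_{X_0^{S_2}}$ satisfies $b^{p^n}\in\calO_{X_0}$ for some $n$; but $\calO_{X_0}$ is $p$-root closed in $\mathrm{Tot}(\calO_{X_0})$ (if $(b')^p\in\calO_{X_0}$ then $\underline{\Spec}_{X_0}\calO_{X_0}[b']\to X_0$ is a finite birational universal homeomorphism with reduced source, its fibres being cut out by divisors of $T^p-(b')^p$, hence an isomorphism by weak normality of $X_0$ in the sense of \Cref{defn_LM}), so iterating gives $b\in\calO_{X_0}$, i.e. $\pi_0$ is an isomorphism and $X_0=X_0^{S_2}$ is $S_2$. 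The step I expect to be the main obstacle is exactly this last comparison — passing from ``$X$ is the perfection of \emph{some} $S_2$ finite-type scheme'' to ``the \emph{distinguished} weakly normal deperfection is $S_2$'' — which is what obliges us to prove the uniqueness of the $S_2$-ification (not just existence) via the Frobenius-colimit Hartogs lemma, whose proof hinges on the quasi-compactness needed for pushforward to commute with filtered colimits, together with the $p$-root-closedness of weakly normal rings.
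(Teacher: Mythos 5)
Your proof is correct and follows essentially the same strategy as the paper's — deperfect to $X_0$, run the classical $S_2$-ification there, re-perfect, and for the ``in particular'' exploit weak normality of the canonical deperfection. The technical packaging differs in three small ways. First, where the paper cites \cite[Lemma 2.11, Corollary 2.14]{Ces21} for the $S_2$-ification as the relative normalization of $U_0\to X_0$, you reconstruct it by hand as $\underline{\Spec}_{X_0}j_{0*}\calO_{U_0}$ and verify coherence by comparing with the normalization; these produce the same sheaf of algebras. Second, the paper argues well-definedness across deperfections by observing that for a finite universal homeomorphism $X_1\to X_0$ the local sections of $\calO_{U_1}$ are iterated $p$-th roots of those of $\calO_{U_0}$ and passing to integral closures, whereas you establish genuine uniqueness of $(X^{S_2}\to X)$ via a Hartogs lemma for perfectly $S_2$ schemes, proved by commuting $j_*$ with the Frobenius filtered colimit; your route is a bit longer but gives cleanly the ``unique up to isomorphism'' statement as literally asserted in the lemma, which the paper's remark only addresses implicitly. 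Third, for the ``in particular,'' the paper directly observes that $X_0^{S_2}\to X_0$ is a finite birational universal homeomorphism with reduced source and quotes the definition of weak normality, while you unwind the same fact step by step into a $p$-root-closedness argument (each extension $\calO_{X_0}\subset\calO_{X_0}[b']$ being a finite birational universal homeomorphism killed by weak normality); this is the identical mechanism, just made explicit. In short: no gap, same approach, slightly more verbose but also slightly more complete on the uniqueness point.
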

	
	\begin{proof}
		Take a finite type reduced deperfection $X_0$ of $X$, an $S_2$ open subset $U_0 \subset X_0$ with complement of codimension $2$ and consider the $S_2$-ification $X_0^{S_2} \to X_0$ given as the normalization of $U_0 \to X_0$, see \cite[Lemma 2.11, Corollary 2.14]{Ces21}. Passing to the perfection, we get the desired morphism with the stated property. Indeed, given a finite universal homeomorphism $X_1 \to X_0$ of reduced schemes such that the preimage $U_1 \to U_0$ is also $S_2$, the local sections of $\calO_{U_1}$ are iterated $p$-th roots of those of $\calO_{U_0}$, so the same holds for the integral closures. For the last claim, observe that $X$ being perfectly $S_2$ implies $X_0^{S_2}\to X_0$ is a finite universal homeomorphism and birational, hence an isomorphism by weak normality of $X_0$.
	\end{proof}
	
	In the next result, we are going to consider the following subset \begin{equation}\mathrm{Codim}_{\leq 1}(x)=\{y \mid y \text{ has colength} \leq 1 \text{ in } {\rm Adm}(\mu)\text{ and }  y \geq x\}
	\end{equation}
	of the $\mu$-admissible set $\mathrm{Adm}(\mu)$. Recall that this is endowed with the Bruhat order, which in turn defines the so-called Bruhat graph by specifying edges between the vertices.
	
	\begin{proposition}\label{prop_S2_criterion_admissible}
		The following are equivalent: 
		\begin{enumerate}
			\item The $\mu$-admissible locus $\calA_{\calI,\mu}$ is perfectly $S_2$.
			\item For any $x \in \mathrm{Adm}(\mu)$, the (undirected) Bruhat graph of $\mathrm{Codim}_{\leq 1}(x)$ is connected.
		\end{enumerate}
	\end{proposition}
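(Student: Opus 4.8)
The plan is to identify the $S_2$ condition on $\calA_{\calI,\mu}$ with a purely combinatorial condition via the Hochster--Huneke formula for local cohomology of affine semigroup / Stanley--Reisner-type situations, here in the guise of \cite{HH94b}. Recall that $\calA_{\calI,\mu}$ is the perfection of a finite type scheme stratified by Schubert cells $\Fl_{\calI}^y$ for $y \in \mathrm{Adm}(\mu)$, each smooth of codimension equal to the codimension of $y$ in the poset $\mathrm{Adm}(\mu)$; after passing to a weakly normal deperfection, $\calA_{\calI,\mu}^{\on{sch}}$ is perfectly $S_2$ if and only if that deperfection is $S_2$ by \Cref{lem_S2ification}. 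The $S_2$ locus is open and $L^+\calI$-stable, so the $S_2$ property fails (if at all) along a whole Schubert cell; thus it suffices to test the condition at the generic point of $\Fl_{\calI}^x$ for each $x\in\mathrm{Adm}(\mu)$, i.e. to check that the punctured local ring at $x$ is connected in codimension $1$ in the sense that makes Serre's $S_2$ equivalent to a Hartshorne-type connectivity statement.

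Concretely, I would first reduce to a local statement: by the theory of $T$-fixed points and the Białynicki-Birula-type retraction (as used in \Cref{lemma_mult_action_global_grass}), an étale or formal neighborhood of the generic point of $\Fl_{\calI}^x$ inside $\calA_{\calI,\mu}$ decomposes, up to the smooth factor $\Fl_{\calI}^x$ itself, into a ``transverse slice'' whose strata are indexed exactly by $\{y\in\mathrm{Adm}(\mu): x\le y\}$ with the induced poset order, and whose codimension-$1$ strata correspond to $y$ with $x\le y$ of codimension $1$. The second key step is then the standard fact (Hochster--Huneke, cf. \cite{HH94b}) that a connected, equidimensional excellent local ring of dimension $\ge 2$ is $S_2$ at a point precisely when the punctured spectrum is ``connected in codimension $1$'' — that is, one cannot disconnect it by removing closed subsets of codimension $\ge 2$; and for a space stratified by affine cells this connectivity is computed by the graph whose vertices are the codimension-$\le 1$ strata above $x$ and whose edges record incidences. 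Since $\mathrm{Adm}(\mu)$ is a subinterval of the Bruhat order and the Schubert cells form a CW-type stratification, that incidence graph is precisely the (undirected) Bruhat graph of $\mathrm{Codim}_{\le 1}(x)$ appearing in condition (2): $x$ itself is the unique minimum, and an element $y$ of codimension $1$ is connected through $x$ to every other such $y'$ exactly when the graph on $\{y:\,x\le y,\ \operatorname{codim}\le 1\}$ is connected.

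Carrying this out, one direction is: if some $\mathrm{Codim}_{\le 1}(x)$ has disconnected Bruhat graph, split it into two nonempty clopen pieces, take the union of the closures of the corresponding Schubert cells on each side; these two closed subsets cover a neighborhood of $x$, meet along a set of codimension $\ge 2$, and each contains $x$ — this disconnects the punctured local ring in codimension $1$, so $S_2$ fails. Conversely, if every such graph is connected, one checks the Hartshorne--Grothendieck connectedness criterion stratum by stratum, descending from the top of $\mathrm{Adm}(\mu)$, to conclude $\calA_{\calI,\mu}$ is connected in codimension $1$ locally everywhere, hence $S_2$ by Hochster--Huneke. The main obstacle I anticipate is justifying rigorously that the étale-local structure of $\calA_{\calI,\mu}$ near the generic point of $\Fl_{\calI}^x$ is governed, as far as codimension-$\le 1$ incidences and connectivity are concerned, by the combinatorial interval $\{y\ge x\}\subset\mathrm{Adm}(\mu)$ alone — i.e. that no ``hidden'' irreducible components or non-transverse intersections intervene; this should follow from the fact that the Schubert stratification is a stratification by affine spaces together with the known equidimensionality of $\calA_{\calI,\mu}$ and normality of the relevant Schubert varieties in codimension $\le 1$ (via \Cref{prop_irred_comp_codim_1}, which ensures each codimension-$1$ stratum lies below at most two maximal translations, keeping the local picture controlled).
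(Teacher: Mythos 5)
Your proposal heads in the right general direction — reduce to a combinatorial criterion via the Schubert stratification and Hochster--Huneke — and this is indeed the shape of the paper's argument. However, the precise criterion you invoke is incorrect, and a couple of essential ingredients from the paper's proof are missing.

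The central claim you rely on, namely that ``a connected, equidimensional excellent local ring of dimension $\ge 2$ is $S_2$ precisely when the punctured spectrum is connected in codimension $1$,'' is false. Any irreducible non-$S_2$ local ring (for instance the local ring at the vertex of the cone over a smooth projective variety that is not arithmetically Cohen--Macaulay, such as a rational quartic curve in $\bbP^3$) has trivially connected punctured spectrum in codimension $1$, yet fails $S_2$. Hartshorne's connectedness theorem gives you one implication ($S_2$ implies connected in codimension $1$), but you cannot reverse it in general. What \cite{HH94b} actually provides (Theorem 3.6) is the statement the paper uses: the $S_2$-ification $\calA_{\calI,\mu}^{\on{sch},S_2} \to \calA_{\calI,\mu}^{\on{sch}}$ has a singleton fiber over a closed point in the $x$-stratum if and only if the Hochster--Huneke graph $\on{HH}(x)$ is connected. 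This is a statement about the $S_2$-ification being local, not directly about $S_2$, and to pass from ``all fibers of the $S_2$-ification are singletons'' to ``the $S_2$-ification is an isomorphism'' one must use that $\calA_{\calI,\mu}^{\on{sch}}$ is the \emph{weakly normal} canonical deperfection: a finite birational universal homeomorphism onto a weakly normal scheme is an isomorphism (this is exactly why \Cref{lem_S2ification} reduces to the weakly normal deperfection). In the counterexample above the $S_2$-ification is in fact bijective and a universal homeomorphism, but the cone is not weakly normal, so no contradiction arises. Your argument, which does not invoke weak normality anywhere, would ``prove'' that cone is $S_2$.

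Two further issues. First, \cite{HH94b} works with (completions of) local rings, whereas the combinatorics of $\mathrm{Adm}(\mu)$ describes the non-complete local picture; the paper bridges this gap by noting that the irreducible components of $\calA_{\calI,\mu}^{\on{sch}}$ are unibranch (\cite[Proposition 3.7]{AGLR22}), so the Hochster--Huneke graph is unchanged under completion, and that $S_2$-ifications commute with completion (\cite[Proposition 3.8]{HH94b}). Your proposal does not address this transfer. Second, your description of the graph is a bit confused: $x$ itself generally does not belong to $\mathrm{Codim}_{\le 1}(x)$ (the set consists of elements of absolute codimension $\le 1$ in $\mathrm{Adm}(\mu)$ lying above $x$), so the phrase ``$x$ is the unique minimum'' and being ``connected through $x$'' does not apply; if $x$ were a vertex the graph would always be connected and the criterion would be vacuous. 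The vertices are the maximal elements $t_{\bar\nu}$ and the codimension-$1$ elements above $x$, and the whole point is that this graph, \emph{without} $x$, can disconnect. Relatedly, the paper does not need any Bia\l{}ynicki-Birula slice argument; one simply compares $\on{HH}(x)$ with the Bruhat graph of $\mathrm{Codim}_{\le 1}(x)$, using \Cref{prop_irred_comp_codim_1} to control how many irreducible components a codimension-$1$ stratum can meet.
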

	
	\begin{proof}
		Let $\calA_{\calI,\mu}^{\on{sch}}$ be the canonical deperfection in the sense of \cite[Definition 3.14]{AGLR22}, which is weakly normal by construction. We know by \Cref{lem_S2ification} that this scheme is $S_2$ if and only if $\calA_{\calI,\mu}$ is perfectly $S_2$. Consider the Hochster--Huneke graph $\on{HH}(x)$ of the local ring of $\calA_{\calI,\mu}^{\on{sch}}$ at some closed point in the $x$-stratum, see \cite[Definition 3.4]{HH94b}. Recall that the vertices of $\on{HH}(x)$ are enumerated by the irreducible components of the local ring, and the edges connecting two of those by prime divisors contained in their intersection. Hence, $\on{HH}(x)$ and $\on{Codim}_{\leq 1}(x)$ have the same number of connected components. Now, as the irreducible components of $\calA_{\calI,\mu}^{\on{sch}}$ are unibranch by \cite[Proposition 3.7]{AGLR22}, the Hochster--Huneke graph does not change under completion in our situation, so we apply \cite[Theorem 3.6]{HH94b}, which states that the fibers of closed points in the $x$-stratum of the $S_2$-ification of $\calA_{\calI,\mu}^{\on{sch}}$ are singletons exactly when $\on{HH}(x)$ is connected. Here, we also use that $S_2$-ifications commute with completion, see \cite[Proposition 3.8]{HH94b}.
		
		At this point, we have concluded that if $\calA_{\calI,\mu}$ is perfectly $S_2$, then the graphs $\on{Codim}_{\leq 1}(x)$ are connected for all $x$. Conversely, if the graphs are all connected, then the $S_2$-ification of $\calA_{\calI,\mu}$ is bijective at $k$-valued points. Since our situation does not change when replacing $k$ by a larger algebraically closed field, the $S_2$-ification is a universally bijective perfectly finite map, and hence an isomorphism.
	\end{proof}

	It would be interesting to find a purely combinatorial proof of the $S_2$ property of admissible sets. Instead, we apply the previous criterion twice to reduce to positive characteristic geometry.

	\begin{corollary}\label{cor_adm_locus_S2}
		The $\mu$-admissible locus $\calA_{\calI,\mu}$ is perfectly $S_2$.
	\end{corollary}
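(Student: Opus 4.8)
The plan is to deduce this from \Cref{prop_S2_criterion_admissible} by transferring the problem to equal characteristic. First I would apply \Cref{prop_S2_criterion_admissible} to reduce the claim to the assertion that, for every $x \in \mathrm{Adm}(\mu)$, the Bruhat graph of $\mathrm{Codim}_{\leq 1}(x)$ is connected. The key observation is that the input to this combinatorial assertion --- the Iwahori--Weyl group $\tilde W$ with its Bruhat order, the échelonnage affine root system, the finite Weyl group $W_0$, the coweight lattice $X_*(T)_I$ as a $W_0$-module, and the orbit $\Lambda(\mu)$ generating $\mathrm{Adm}(\mu)$ --- depends only on these combinatorial data and the fixed residue field $k$, not on the particular local field $F$ realizing them. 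Hence one can pick an equal characteristic local field $F' = k\rpot{t}$, a reductive $F'$-group $G'$, an Iwahori $\calI'$ and a cocharacter $\mu'$ realizing the same datum, so that the identification $\tilde W \cong \tilde W'$ carries $\mathrm{Adm}(\mu)$ onto $\mathrm{Adm}(\mu')$; when the datum is of tame type one may take $G'$ tamely ramified (cf.~the comparison of admissible loci in \cite{AGLR22}).

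Since $\mathrm{Codim}_{\leq 1}(x)$ and its Bruhat graph only see this datum, it now suffices to prove the corresponding statement for $(G',\calI',\mu')$, and by a second application of \Cref{prop_S2_criterion_admissible}, this time over $F'$, that statement is equivalent to $\calA_{\calI',\mu'}$ being perfectly $S_2$ --- this is the two-fold use of the criterion. For the equal characteristic triple I would then argue geometrically: $\calA_{\calI',\mu'}$ is the perfection of the special fiber of the scheme-theoretic local model $\calM^{\on{sch}}_{\calI',\mu'}$ over $k\pot{t}$, which by \Cref{thm_zhu_coh} together with the Cohen--Macaulayness of equal characteristic local models (see \cite{Zhu14,HR19}) is Cohen--Macaulay and flat over $k\pot{t}$. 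Therefore the uniformizer is a regular element, the special fiber is again Cohen--Macaulay, in particular $S_2$, and $\calA_{\calI',\mu'}$, being the perfection of an $S_2$ finite type scheme, is perfectly $S_2$ (cf.~\Cref{lem_S2ification}).

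The step I expect to be the main obstacle is the realization in the first paragraph: one must check that the combinatorial datum --- possibly with a non-reduced échelonnage root system and a non-trivial inertia action on $X_*(T)$ --- genuinely comes from a reductive group over $k\rpot{t}$ to which \Cref{thm_zhu_coh}, or a known extension of Zhu's normality and Cohen--Macaulayness results to wildly ramified groups in equal characteristic, applies. For data of tame type this is automatic by taking $G'$ tame; the remaining cases, such as wildly ramified groups, require either a tame model of the same admissible combinatorics or the corresponding equal characteristic geometry in the wild setting. Granting this, the corollary follows.
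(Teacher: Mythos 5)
Your overall strategy — invoke \Cref{prop_S2_criterion_admissible} to turn the question into a purely combinatorial one about the Bruhat graph of $\mathrm{Codim}_{\leq 1}(x)$, realize that combinatorics over $F'=k\rpot{t}$, apply the criterion a second time, and conclude by Cohen--Macaulayness via \Cref{thm_zhu_coh} — is the paper's strategy. But you have correctly identified, and then left open, the step on which the whole argument turns: \emph{why} can every admissible-set datum that arises (including ones with non-reduced \'echelonnage system $\Sigma$ or nontrivial wild inertia action on $X_*(T)$) be realized by a group over $k\rpot{t}$ to which \Cref{thm_zhu_coh} applies? Your suggestion to take a tame $G'$ with the ``same'' datum works when the original group is tame, but leaves the wild case as a genuine hole, and \Cref{thm_zhu_coh} is only proved for tame $G'$ in equal characteristic, so one cannot simply look for wild $G'$ either.

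The paper closes this gap by being less ambitious about what needs to be matched. It first reduces to $G$ adjoint, and then observes that the only data entering the combinatorial criterion are: the Iwahori--Weyl group $\tilde W$ \emph{as an abstract group of affine transformations of $\Sigma$} together with the Bruhat order, and the $W_0$-orbit $\Lambda(\mu)$ of translations generating $\mathrm{Adm}(\mu)$ (using \cite[Theorem C]{Hai18}). Crucially, one does \emph{not} need a $G'$ reproducing the same absolute root datum, inertia action, or even the same (possibly non-reduced) \'echelonnage system — only the same abstract extended affine Coxeter group with the same translation orbit. By the Bruhat--Tits classification of affine root systems and affine Coxeter groups \cite[1.3.17, 1.4.6]{BT72}, any such structure is already realized by a \emph{split} group $G'$ over $k\rpot{t}$. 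Since split groups are trivially tame, \Cref{thm_zhu_coh} and \cite[Theorem A.3]{HMS14} then give Cohen--Macaulayness of $\calA^{\on{sch}}_{\calI',\mu'}$. So the fix for the obstacle you flagged is precisely: pass to adjoint $G$, forget everything but the Coxeter-theoretic skeleton, and use the fact that this skeleton is always split-realizable. Without this observation your argument does not go through in the wildly ramified cases.
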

	\begin{proof}
		We may and do assume that $G$ is adjoint. By \Cref{prop_S2_criterion_admissible}, it suffices to prove an analogous combinatorial property for $\on{Adm}(\mu)$. Observe that $\tilde{W}$ embeds into the group of affine transformations of the échelonnage root system $\Sigma$, its quotient $W_0$ identifies with vector transformations of $\Sigma$, and $\on{Adm}(\mu)$ is the lower poset generated by the $W_0$-orbit of some translation element, thanks to \cite[Theorem C]{Hai18}. But this can be realized as $\on{Adm}(\mu')$ for a split $k\rpot{t}$-group $G'$ by the classification of affine Coxeter groups, see \cite[1.3.17, 1.4.6]{BT72}. Applying again \Cref{prop_S2_criterion_admissible}, we need to show that, under the previous restrictions, the local rings of $\calA_{\calI',\mu'}$ are connected away from codimension $2$. But this is a consequence of \Cref{thm_zhu_coh} and Grothendieck's connectedness theorem, see \cite[Exposé XIII, Théorème 2.1]{GR05}.
	\end{proof}
	\section{Comparison of nearby cycles}\label{section_nearby_cycles}
	In this section, we compare the analytic nearby cycles of \cite{Sch17} to the formal ones of \cite[Remark 4.29]{Gle22} under an algebraicity assumption. In the classical setting of formal schemes, our results are known by the work of Huber \cite{Hub96}, so the scheme-theoretically inclined reader can safely skip this section.
	
	Fix a rank $1$ valuation ring $O$ and a prekimberlite $X$ over $\Spd O$ in the sense of \cite[Definition 4.15]{Gle22}. Let $j:Y=X^\mathrm{an}\to X$,  $i:Z^\diamond\to X$ be the natural inclusions, where $Z=X^{\on{red}}$. Given an $\ell$-torsion coefficient ring $\Lambda$ with $\ell \neq p$, the first author defines in \cite{Gle22} a naive nearby cycles functor \begin{equation}R\Psi':D_\mathrm{\acute{e}t}(Y,\Lambda)\to D_\mathrm{\acute{e}t}(Z,\Lambda).\end{equation} This arises as the (left-completion of the) derived pushforward of a morphism of sites $\Psi' \colon Y_\mathrm{\acute{e}t}\to Z_\mathrm{\acute{e}t}$ induced by canonical liftings of \'{e}tale neighborhoods \cite[Theorem 4.27]{Gle22}. On the other hand, we have the nearby cycles functor of Scholze's theory $R\Psi:D_\mathrm{\acute{e}t}(Y,\Lambda)\to D_\mathrm{\acute{e}t}(Z^\diamond,\Lambda)$ given by $R\Psi=i^*Rj_*$. 
	
	In order to compare them, we can consider the fully faithful functor (\Cref{fullyfaithfulnesssmalldia}):
	\begin{equation}
	D_\mathrm{\acute{e}t}(Z,\Lambda) \xrightarrow{c_Z^*} D_\mathrm{\acute{e}t}(Z^\diamondsuit,\Lambda) \xrightarrow{t_Z^*} D_\mathrm{\acute{e}t}(Z^\diamond,\Lambda)
	\end{equation}
	where $c_{Z}^*$ is the functor from \cite[\S 27]{Sch17} and $t_Z:Z^\diamond\to Z^\diamondsuit$ is the natural inclusion. We refer the reader to \cite[Definition 2.10]{AGLR22} for the construction of the functors $Z \mapsto Z^\diamond$ and $Z\mapsto Z^\diamondsuit$. 
	By construction, $R\Psi'$ lands in the full subcategory of $D_\mathrm{\acute{e}t}(Z^\diamond,\Lambda)$ just described, while in general $R\Psi$ might not. In this section, we prove that under some conditions if $R\Psi(A)\in D_\mathrm{\acute{e}t}(Z,\Lambda)$ then $R\Psi(A)=t_Z^*c_Z^*R\Psi'(A)$. We do this in a series of lemmas. 
	\begin{lemma}
		\label{oneglobalsectiontorulethemall}
		Let $U$ be a perfect scheme separated over $\bbF_p$. When $U$ is affine denote by $t_{U^\dagger}\colon U^\dagger\to U^\diamondsuit$ the inclusion of the closure of $U^\diamond$ in $U^\diamondsuit$.
		For every $B\in D_\mathrm{\acute{e}t}(U,\Lambda)$ we have canonical identifications $R\Gamma(U^\diamond, t_U^*c_U^*B)=R\Gamma(U^\diamondsuit, c_U^*B)=R\Gamma(U,B)$. Moreover, if $U$ is affine $R\Gamma(U^\dagger, t_{U^\dagger}^*c_U^*B)=R\Gamma(U^\diamond, t_U^*c_U^*B)$.
	\end{lemma}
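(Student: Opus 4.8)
The plan is to reduce the first sentence to the full faithfulness statement \Cref{fullyfaithfulnesssmalldia} together with the scheme-to-v-sheaf comparison of étale cohomology, and to handle the closure $U^\dagger$ by an overconvergence argument in the style of Huber. First I would record that $R\Gamma(U^\diamondsuit, c_U^* B) = R\Gamma(U, B)$ is the comparison of \cite[\S 27]{Sch17}: the morphism of sites $U^\diamondsuit \to U$ underlying $c_U^*$ induces an isomorphism on (torsion) étale cohomology (and $c_U^*$ is fully faithful). For $R\Gamma(U^\diamond, t_U^* c_U^* B) = R\Gamma(U, B)$ I would invoke \Cref{fullyfaithfulnesssmalldia}: since $t_U^* c_U^*$ is fully faithful and $t_U^* c_U^* \Lambda = \Lambda$ (pullbacks of constant sheaves are constant), applying full faithfulness to $R\Hom_U(\Lambda, B)$ gives the canonical isomorphism $R\Gamma(U, B) = R\Hom_U(\Lambda, B) \cong R\Hom_{U^\diamond}(\Lambda, t_U^* c_U^* B) = R\Gamma(U^\diamond, t_U^* c_U^* B)$. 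As all maps in sight are the canonical adjunction maps for the tower $U^\diamond \xrightarrow{t_U} U^\diamondsuit \to U$, these identifications are compatible, and in particular restriction along $t_U$ is an isomorphism on objects of the form $c_U^* B$; this settles the first sentence.

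For the affine case I would study the closed sub-v-sheaf $U^\dagger = \overline{U^\diamond} \subset U^\diamondsuit$. When $U = \Spec R$ with $R$ a perfect $\bbF_p$-algebra, $U^\diamond$ is quasi-compact, hence so is $U^\dagger$, and $\iota \colon U^\diamond \hookrightarrow U^\dagger$ is a v-dense open immersion whose complement consists of points specializing uniquely from $U^\diamond$ (this is where affineness enters). Since $t_{U^\dagger}^* c_U^* B$ is pulled back from the scheme $U$, it is overconvergent, i.e.\ insensitive to specialization; hence the unit $t_{U^\dagger}^* c_U^* B \to R\iota_* \iota^*(t_{U^\dagger}^* c_U^* B)$ is an isomorphism — this is the mechanism behind Huber's comparison \cite{Hub96}, the link of a boundary point retracting onto its generization in $U^\diamond$. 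Applying $R\Gamma(U^\dagger, -)$ and using $\iota^* t_{U^\dagger}^* = t_U^*$ then yields $R\Gamma(U^\dagger, t_{U^\dagger}^* c_U^* B) = R\Gamma(U^\diamond, t_U^* c_U^* B)$.

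I expect the last step to be the main obstacle: one must describe the closure $U^\dagger$ of $U^\diamond$ inside $U^\diamondsuit$ precisely enough to see that its boundary is built from uniquely specializing points, and then verify that the étale cohomology of an overconvergent sheaf is unaffected by adjoining that boundary. The identifications of the first sentence, by contrast, are essentially formal once \Cref{fullyfaithfulnesssmalldia} and \cite[\S 27]{Sch17} are granted.
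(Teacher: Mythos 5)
Your reduction of the first equality to \Cref{fullyfaithfulnesssmalldia} is circular in the logic of the paper: the proof of \Cref{fullyfaithfulnesssmalldia} itself begins by applying \Cref{oneglobalsectiontorulethemall} to reduce full faithfulness to the statement $R\Gamma(Q,B)=R\Gamma(V^\diamond,B)$ for small \'etale $V\to U$. So you cannot use the proposition to prove the lemma; the dependency goes the other way. The identity $R\Gamma(U^\diamondsuit, c_U^*B)=R\Gamma(U,B)$ is indeed \cite[Proposition 27.2]{Sch17}, but the genuinely new content here is $R\Gamma(U^\diamond, t_U^*c_U^*B)=R\Gamma(U^\diamondsuit, c_U^*B)$, i.e.\ that restriction from $U^\diamondsuit$ to the smaller diamond $U^\diamond$ does not change cohomology of sheaves pulled back from $U$. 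This is not a formal consequence of full faithfulness of $c_U^*$; it requires an actual computation. The paper establishes it by running Scholze's own reduction steps from the proof of \cite[Proposition 27.2]{Sch17}: descent to $U=\Spec(\bbA^I)$, invariance under change of base field (\cite[Theorem 19.5]{Sch17}), and the comparison of $R\Gamma(\bbA^I_C,-)$ with $R\Gamma(\bbB^I_C,-)$ that appears inside that proof. You would need to reproduce this (or some other direct argument) rather than appeal to a result proved later from the lemma.

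Your treatment of the affine addendum, on the other hand, is essentially the paper's: overconvergence of $c_U^*B$ forces the unit $t_{U^\dagger}^*c_U^*B\to Ri_*i^*t_{U^\dagger}^*c_U^*B$ for $i\colon U^\diamond\hookrightarrow U^\dagger$ to be an isomorphism, whence the equality of global sections. The paper packages this via a Postnikov tower, quasicompact base change, and \cite[Proposition 17.6]{Sch17}, but the mechanism is the one you describe.
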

	\begin{proof}
		The second equality is \cite[Proposition 27.2]{Sch17}. The first equality is done following the same reduction steps in the proof loc. cit. reducing to the case where $U=\Spec(\bbA^I)$ for a set $I$. This case is settled during the proof of \cite[Proposition 27.2]{Sch17} by appealing to the invariance under change of base field \cite[Theorem 19.5]{Sch17} and comparing cohomology on affine space $\bbA^I_C$ and the unit ball $\bbB^I_C$. 
		
		If $i:U^\diamond\to U^\dagger$ is the natural inclusion, then a Postnikov tower argument, quasicompact basechange, \cite[Proposition 17.6]{Sch17}, and that $c_U^*B$ is overconvergent imply that $Ri_*t_{U^\diamond}^*c_U^*B=t_{U^\dagger}^*c_U^*B$. This proves the second claim. 
	\end{proof}
	\begin{proposition}
		\label{fullyfaithfulnesssmalldia}
		Let $U$ be a perfect scheme separated over $\bbF_p$, the functor $t_U^*c_U^*$ is fully faithful. 
	\end{proposition}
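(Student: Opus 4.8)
The plan is to deduce full faithfulness from the cohomological comparison of \Cref{oneglobalsectiontorulethemall} by a purely formal argument. Write $\pi\colon U^\diamond\to U$ for the composite $U^\diamond \xrightarrow{t_U} U^\diamondsuit \xrightarrow{c_U} U$, so that the functor in question is simply $\pi^* = t_U^* c_U^*$, with right adjoint $R\pi_* = R c_{U*}R t_{U*}$. For any such adjunction, $\pi^*$ is fully faithful if and only if the unit $\id \to R\pi_*\pi^*$ is an equivalence of endofunctors of $D_\mathrm{\acute{e}t}(U,\Lambda)$. So it suffices to show that for every $B \in D_\mathrm{\acute{e}t}(U,\Lambda)$ the unit map $B \to R\pi_*\pi^* B$ is an isomorphism.

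This is a map of étale sheaves on $U$, so it is enough to check that it becomes an isomorphism after applying $R\Gamma(U',-)$ for $U'$ ranging over the affine schemes étale over $U$, since these form a basis of the étale site (and compute stalks through strictly henselian local rings, where there is no higher étale cohomology with torsion coefficients). Each such $U'$ is again a perfect scheme separated over $\bbF_p$. Now $R\Gamma(U', R\pi_*\pi^*B) = R\Gamma((U')^\diamond,\pi^* B|_{(U')^\diamond})$ because $R\pi_*$ is compatible with étale localization of the base, and $\pi^* B|_{(U')^\diamond} = t_{U'}^* c_{U'}^*(B|_{U'})$ by naturality of Scholze's functor $c^*$ from \cite{Sch17} and of the inclusion $t$ along the étale map $U'\to U$. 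Applying \Cref{oneglobalsectiontorulethemall} to $U'$ and the object $B|_{U'}$ identifies this with $R\Gamma(U', B|_{U'}) = R\Gamma(U', B)$, and unwinding the construction one sees that the composite identification is precisely the map induced by the unit. Hence the unit is an isomorphism and $\pi^* = t_U^* c_U^*$ is fully faithful.

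The real content is entirely contained in \Cref{oneglobalsectiontorulethemall}; the remaining work is bookkeeping. The step I expect to require the most care is the verification of the two naturality statements just invoked — that both Scholze's comparison $c^*$ and the inclusion $t_U\colon U^\diamond\to U^\diamondsuit$ are compatible with étale base change, so that the square relating $(U')^\diamond\to U^\diamond$ and $U'\to U$ is respected — together with confirming that the ``test on a basis'' reduction is legitimate in the (a priori unbounded, left-complete) derived categories of \cite{Gle22,Sch17}.
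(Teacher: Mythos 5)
Your opening reduction is sound and matches the paper: full faithfulness of $t_U^*c_U^*$ is equivalent to the unit $\id\to Rc_{U,*}Rt_{U,*}t_U^*c_U^*$ being an equivalence, and this can be tested on sections over affine \'etale neighborhoods $U'\to U$. But the pivotal identification
\begin{equation*}
R\Gamma\bigl(U',\,Rc_{U,*}Rt_{U,*}t_U^*c_U^*B\bigr)\;\stackrel{?}{=}\;R\Gamma\bigl((U')^\diamond,\,t_{U'}^*c_{U'}^*(B|_{U'})\bigr),
\end{equation*}
which you attribute to ``$R\pi_*$ being compatible with \'etale localization of the base,'' is not a naturality statement at all; it is the actual content of the proposition. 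Unwinding, the left side equals $R\Gamma(Q,\,c_U^*B|_Q)$ with $Q=U^\diamond\times_{U^\diamondsuit}(U')^\diamondsuit$, because the square for $c$ is cartesian ([Sch17, Prop.~27.1]) while the square for $t$ is \emph{not}: one has $(U')^\diamond\subsetneq Q$ in general. Concretely (and the paper makes this explicit after passing to a Noetherian model $S\to T$ of $U'\to U$ and base-changing along $U^\diamond\to T^\diamondsuit$), $Q$ corresponds to $S^\diamondsuit$ while $(U')^\diamond$ corresponds to $S^\diamond$, and these are genuinely different spatial v-sheaves. The ``excess'' $Q\setminus\overline{(U')^\diamond}$ consists of points whose integral structure escapes $U'$ even though the analytic locus lands in $U'$; there is no formal base-change isomorphism here.

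The bulk of the paper's proof is exactly the verification that this excess carries no cohomology for sheaves $B$ of the form $c_U^*A$: one first shows $R\Gamma(V^\diamond,B)=R\Gamma(\overline V,B)$ (overconvergence plus the Postnikov argument of \Cref{oneglobalsectiontorulethemall}), then reduces to the vanishing $R\Gamma(Q,j_!B)=0$ for the open complement of $\overline V$, and finally--after the identifications with $S^\diamond\subset S^\dagger\subset S^\diamondsuit$ and a further reduction to $S=\bbA^N_{\overline{\bbF}_p}$--invokes [FS21, Theorem~IV.5.3], using that $S^\diamondsuit\setminus S^\dagger$ is partially proper over $\Spd\overline{\bbF}_p$. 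None of this is absorbed by the naturality of $c^*$ or of $t$, and indeed the step you single out at the end as ``requiring the most care'' is exactly where the proof cannot proceed by bookkeeping alone: the proposed base-change compatibility for $t$ is false as stated, and its cohomological shadow (for sheaves pulled back from $U$) is what the proposition is really about. You would need to supply the $Q$-versus-$(U')^\diamond$ comparison by an argument of the kind the paper gives before the proof is complete.
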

	\begin{proof}
		We may assume $A\in D^+_\mathrm{\acute{e}t}(U,\Lambda)$, since full faithfulness extends formally to left-completions. 
		By descent, we may also assume $U$ is defined over $\Spec(\overline{\bbF}_p)$.
		For $A\in D_\mathrm{\acute{e}t}(U,\Lambda)$ we verify $A\to Rc_{U,*}Rt_{U,*}t_U^*c_U^*A$ is an isomorphism by checking this on sections. Let $B=c_U^*A$, we also denote by $B$ the evident pullback to different loci. 
		By \Cref{oneglobalsectiontorulethemall} one reduces to proving $R\Gamma(Q,B)=R\Gamma(V^\diamond,B)$ for sufficiently small \'etale neighborhoods $V\to U$, for $Q=U^\diamond\times_{U^\diamondsuit} V^\diamondsuit$. 
		Let $W\subseteq U$ be an open subset and let $V_W=V\times_U W$, observe that $W^\diamond\times_{U^\diamond} Q=W^\diamond\times_{W^\diamondsuit} V^\diamondsuit_W$. 
		Applying descent to an open cover $\coprod W\to U$ and shrinking $V$ we may assume that $U$ and $V$ are affine. 
		Let $\overline{V}$ be the closure of $V^\diamond$ in $Q$, arguing as in \Cref{oneglobalsectiontorulethemall} we have $R\Gamma(V^\diamond,B)=R\Gamma(\overline{V},B)$ so it suffices to see that $R\Gamma(Q,j_!B)=0$ for the open immersion $j:Q\setminus \overline{V}\to Q$.
		By Noetherian approximation $V\to U$ arises as the base change of an \'etale map $S\to T$ with $S$ and $T$ the spectrum of finite type $\overline{\bbF}_p$-algebras. 
		All spaces above come from basechange by the map $\pi:U^\diamond\to T^\diamondsuit$ which is qcqs. 
		Indeed, $Q$ corresponds to $S^\diamondsuit$, $V^\diamond$ corresponds to $S^\diamond$ and $\overline{V}$ corresponds to $S^\dagger$. 
		To see the last identification recall that $U^\dagger\times_{T^\dagger} S^\dagger=V^\dagger$ as it is evident from the moduli description \cite[Proposition 2.22]{Gle22}, and that $V^\diamond$ is always dense in $V^\dagger$ \cite[Proposition 2.24]{Gle22}.
		By \cite[Proposition 17.6]{Sch17}, it suffices to prove $R\Gamma(S^\diamondsuit, Rs_!s^*R\pi_*B)=0$ where $s:S^\diamondsuit\setminus S^\dagger\to S^\diamondsuit$ is the natural inclusion. 
		We claim $R\Gamma(S^\diamondsuit, Rs_!K)=0$ for any $K$.
		Finding a closed immersion $S\to \bbA_{\overline{\bbF}_p}^N$, it suffices to prove this if $S=\bbA_{\overline{\bbF}_p}^N$. 
		This follows from \cite[Theorem IV.5.3]{FS21}, since $S^\diamondsuit \setminus S^\dagger$ is a spatial diamond partially proper over $\Spd(\overline{\bbF}_p)$. Indeed, it is the pointed formal completion of the divisor at infinity in $\bbP_{\overline{\bbF}_p}^N$.
	\end{proof}
	\begin{lemma}
		\label{vanishingofpartiallycompact}
		Let $A\in D_\mathrm{\acute{e}t}(Y,\Lambda)$. Then $R\Gamma(X,Rj_!A)=0$, or equivalently $R\Gamma(Y,A)=R\Gamma(Z^\diamond,R\Psi(A))$.
	\end{lemma}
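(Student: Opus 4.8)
The two formulations are equivalent by the exact triangle $Rj_!A \to Rj_*A \to i_*i^*Rj_*A$ applied to $R\Gamma(X,-)$: taking global sections over $X$ and using that $R\Gamma(X,Rj_*A)=R\Gamma(Y,A)$ and $R\Gamma(X,i_*i^*Rj_*A)=R\Gamma(Z^\diamond,R\Psi(A))=R\Gamma(Z,R\Psi(A))$ (the last step will use \Cref{oneglobalsectiontorulethemall} once we know $R\Psi(A)$ is pulled back from $Z$, but in fact we only need $R\Gamma(Z^\diamond,-)$ here, so no algebraicity hypothesis is required). Thus the content is the vanishing $R\Gamma(X,Rj_!A)=0$.

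The plan is to deduce this from the analogous vanishing on the diamond $X^\dia$ together with the structure of the specialization inclusions. Recall that $X$ is a prekimberlite over $\Spd O$ with $O$ of rank $1$; the open analytic locus $Y=X^{\on{an}}$ is the preimage of the analytic locus of $\Spd O$, and $Y^\dia=Y$ sits inside $X^\dia$ as a partially proper open whose closed complement is $Z^\dia$ (for $Z=X^{\on{red}}$). First I would reduce to the case where $X$ is affinoid (a v-cover by affinoids, plus descent for $Rj_!$, noting $Rj_!$ commutes with the relevant base changes along étale maps and $j_!$ is computed stalkwise). For $X$ affinoid, $Y$ is partially proper over the punctured disk $\Spd O \setminus \{0\}$-type base, and the key point is that $Rj_!A$ has no global cohomology because $Y$ is \emph{partially proper} over the base while $X$ is a formal-type object whose only non-analytic point is the closed one: concretely, $R\Gamma(X,Rj_!A)$ computes compactly-supported-towards-$Z$ cohomology of $Y$, and partial properness of $Y\to \Spd O^{\on{an}}$ forces this to vanish exactly as in \Cref{fullyfaithfulnesssmalldia}, where the analogous statement $R\Gamma(S^\diamondsuit, Rs_!K)=0$ was obtained from \cite[Theorem IV.5.3]{FS21} for the pointed formal completion of a divisor.

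So the strategy is: (i) reduce to $X$ affinoid by v-descent; (ii) identify $R\Gamma(X,Rj_!A)$ with $R\Gamma$ of an extension-by-zero along the open inclusion of a partially proper diamond into its (pointed) completion — this is where the prekimberlite axioms of \cite[Definition 4.15]{Gle22} enter, giving that $X^\dia$ is a spatial diamond and $Y \hookrightarrow X^\dia$ has closed complement $Z^\dia$ with $Y$ partially proper over the analytic base; (iii) invoke the vanishing of $R\Gamma$ of a $!$-extension from a partially proper spatial diamond, via \cite[Theorem IV.5.3]{FS21}, exactly as in the last paragraph of the proof of \Cref{fullyfaithfulnesssmalldia}. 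The main obstacle I anticipate is step (ii): making precise that the complement $Z^\dia$ of $Y$ in $X^\dia$ is closed and that $Y$ is partially proper over the punctured base (not over all of $\Spd O$), so that the Fargues--Scholze vanishing applies; this requires carefully unwinding the specialization maps and the definition of the analytic locus of a prekimberlite, and checking spatiality is preserved under the reduction to the affinoid case.
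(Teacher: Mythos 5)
Your high-level strategy—reduce to affinoid by v-descent and then cite the Fargues--Scholze vanishing \cite[Theorem IV.5.3]{FS21}—is the same as the paper's, and you correctly identified that the \emph{content} is the vanishing $R\Gamma(X,Rj_!A)=0$. But the obstacle you flagged in step (ii) is a genuine gap, and your proposed route around it does not close it. You want to prove that $Y$ is partially proper (over the punctured base, or over a geometric point) for a \emph{general} affinoid $X=\Spd(R^+)$. This is not obvious and is not how the paper argues. Instead, after reducing to $X=\Spd(R^+)$, the paper observes that a choice of pseudouniformizer produces a \emph{qcqs} map $\Spd(R^+)\to\Spd(W(k)\pot t)$. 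Pushing forward along this qcqs map (via \cite[Proposition 17.6]{Sch17}) reduces the vanishing of $R\Gamma(-,Rj_!A)$ to the single model case $X=\Spd(W(k)\pot t)$. In that concrete case the analytic locus is an explicit pointed formal completion, and \cite[Theorem IV.5.3]{FS21} (or rather \cite[Remark V.4.3]{FS21}) applies directly. The model-case reduction is exactly the piece of the argument your proposal is missing: it converts the partial-properness question for arbitrary $R^+$ into one for $W(k)\pot t$, where it can be checked by hand.

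Two smaller points. First, before any descent you should reduce to $A\in D^+_{\mathrm{\acute et}}(Y,\Lambda)$ via a Postnikov limit argument, using that $j_!$ commutes with canonical truncations; without this, $D_{\mathrm{\acute et}}$ being merely left-complete makes the hyperdescent step delicate. Second, your descent step alludes to ``étale maps'' and ``$j_!$ computed stalkwise,'' but the paper's reduction to affinoid uses a v-hypercover $X_\bullet\to X$ by disjoint unions of $\Spd(R^+_j)$ with $\Spa(R_j,R_j^+)$ strictly totally disconnected, combined with v-hyperdescent \cite[Proposition 17.3]{Sch17} and \emph{proper} base change (the covering maps are not étale, and global sections are not computed stalkwise), so the justification you give would not carry through as written.

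Finally, a caution on the first paragraph: $R\Gamma(Z,R\Psi(A))$ as written in the statement is (by context and by \Cref{3nearbycycles}) to be read through the pushforward $R(f\circ i)_*$ down to $Z_{\mathrm{\acute et}}$, not as a literal $R\Gamma$ on $Z$ of a sheaf on $Z^\diamond$; the equivalence of the two formulations is then the exact triangle argument you describe. This is a notational subtlety rather than a mathematical one, but it is worth being explicit about.
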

	\begin{proof}
		Since $j_!$ commutes with canonical truncations a Postnikov limit argument allows us to assume $A\in D^+_\mathrm{\acute{e}t}(Y,\Lambda)$. Since $X$ is a specializing v-sheaf there is a hypercover of $X_\bullet\to X$ where each $X_i$ is of the form $\coprod_{j\in I_i}\Spd(R^+_j)$ for $I_j$ a set and the $\Spa(R_j,R_j^+)$ are strictly totally disconnected perfectoid spaces. By v-hyperdescent \cite[Proposition 17.3]{Sch17} and proper basechange we may assume $X=\Spd(R^+)$. At this point we may cite \cite[Remark V.4.3]{FS21}. Let us explain a detail. A choice of pseudouniformizer defines a qcqs map $\Spd(R^+)\to \Spd(W(k)\pot{t})$ and \cite[Proposition 17.6]{Sch17} reduces the computation to the case $X=\Spd(W(k)\pot{t})$ which follows from \cite[Theorem IV.5.3]{FS21}.
	\end{proof}
	We also have a map of sites $f \colon X_v\to Z_\mathrm{\acute{e}t}$ induced again by canonical liftings of \'{e}tale neighborhoods. Restricted to $Z^\diamond_v$, $f\circ i_v$ sends an \'{e}tale map $U\to Z$ to $U^\diamond\to Z^\diamond$. In the case of $Y_v$, $f\circ j_v$ factors as the composition of $\nu_Y \colon Y_v\rightarrow Y_\mathrm{\acute{e}t}$ with $\Psi'\colon Y_{\mathrm{\acute{e}t}} \to Z_{\mathrm{\acute{e}t}}$. We get a functor $Rf_{v,*}:D(X_v,\Lambda)\to D_\mathrm{\acute{e}t}(Z,\Lambda)$ that factors through $D_\mathrm{\acute{e}t}(X,\Lambda)$, since its right adjoint factors through the inclusion $D_\mathrm{\acute{e}t}(X,\Lambda)\subseteq D(X_v,\Lambda)$. We denote by $Rf_*$ the induced map on this latter category. 
	\begin{lemma}
		\label{3nearbycycles}
		Let $A\in D_\mathrm{\acute{e}t}(Y,\Lambda)$.
		Then $R(f\circ i)_*R\Psi(A)=R\Psi'A$.
	\end{lemma}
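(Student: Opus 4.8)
The plan is to unwind both sides through the explicit descriptions of the sites and the morphisms defining $R\Psi$, $R\Psi'$, $f$ and $i$, and to check the claimed identity after applying $R\Gamma(-,?)$ on a cofinal system of \'etale neighborhoods of $Z$, which suffices because all the functors in play are computed as derived pushforwards along morphisms of sites and both targets live in $D_\mathrm{\acute{e}t}(Z,\Lambda)$. As usual, a Postnikov limit argument (using that $j_!$, $i^*$ and the various pushforwards commute with canonical truncations) reduces us to $A \in D^+_\mathrm{\acute{e}t}(Y,\Lambda)$.

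First I would spell out the right-hand side: by definition $R\Psi' A = R\Psi'_* A$ where $\Psi' \colon Y_\mathrm{\acute{e}t} \to Z_\mathrm{\acute{e}t}$ sends an \'etale neighborhood $V \to Z$ to its canonical lift $\widetilde{V}^{\mathrm{an}} \to Y$, so that for $V \to Z$ \'etale one has $R\Gamma(V, R\Psi' A) = R\Gamma(\widetilde{V}^{\mathrm{an}}, A)$, where $\widetilde{V}$ is the canonical lift of $V$ to a prekimberlite \'etale over $X$. Next I would compute the left-hand side: the functor $f \colon X_v \to Z_\mathrm{\acute{e}t}$ sends $V \to Z$ to $\widetilde{V}^\diamond \to X$ (viewed in $X_v$), and $i \colon Z^\diamond \to X$ is the inclusion, so $R(f\circ i)_* R\Psi(A)$ evaluated on $V \to Z$ computes $R\Gamma$ of $R\Psi(A) = i^* Rj_* A$ over the v-sheaf $\widetilde{V}^\diamond \times_X$-pullback, i.e. over the reduced \'etale $Z$-neighborhood $V$ realized inside $X$. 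The key point is then the identity $R\Gamma(V, R(f\circ i)_* R\Psi(A)) = R\Gamma(\widetilde{V}^{\mathrm{an}}, A)$, which after the base-change to $\widetilde{V}$ (an \'etale prekimberlite over $X$, with analytic locus $\widetilde{V}^{\mathrm{an}}$ and reduced locus $V$) is exactly the assertion that $R\Gamma(V, i_V^* Rj_{V,*} A|_{\widetilde V^{\mathrm{an}}}) = R\Gamma(\widetilde V^{\mathrm{an}}, A|_{\widetilde V^{\mathrm{an}}})$. This last equality is precisely the statement of \Cref{vanishingofpartiallycompact} applied to the prekimberlite $\widetilde V$ over $\Spd O$, namely that $R\Gamma(Y',A') = R\Gamma(Z', R\Psi(A'))$ for $Y' = (\widetilde V)^{\mathrm{an}}$, $Z' = (\widetilde V)^{\mathrm{red}}$.

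I expect the main obstacle to be the careful bookkeeping of the morphisms of sites: one must check that the assignment $V \mapsto \widetilde{V}$ genuinely identifies $f \circ i$ on \'etale neighborhoods with the composite $Z^\diamond_v \to X_v \to Z_\mathrm{\acute{e}t}$ described in the paragraph preceding the lemma, that it is compatible with localization on $Z$ (so that $R\Gamma$ over a neighborhood of a point of $Z$ is computed by a cofinal system of such lifts), and that the base change along $\widetilde V \to X$ commutes with $i^* Rj_*$ — this is a proper/quasicompact base change issue of the kind already handled in \Cref{vanishingofpartiallycompact}, using that $X$ is a specializing v-sheaf and reducing by v-hyperdescent to $X = \Spd(R^+)$ for a strictly totally disconnected base. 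Once these identifications are in place, the equality of the two sides on each $V$ is a formal consequence of \Cref{vanishingofpartiallycompact}, and by fully faithfulness of the \'etale realization on $Z$ (which is implicit in working with $D_\mathrm{\acute{e}t}(Z,\Lambda)$) the sheaf-level identity $R(f\circ i)_* R\Psi(A) = R\Psi' A$ follows.
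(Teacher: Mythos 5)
Your proposal is correct and takes essentially the same approach as the paper. The paper organizes the argument globally: it applies $Rf_*$ to the exact triangle $Rj_!A \to Rj_*A \to i_*R\Psi(A)$, identifies $Rf_*Rj_*$ with $R\Psi'$ formally (via the factorization $f_v\circ j_v = \Psi'\circ\nu_Y$ and $\nu_{Y,*}\nu_Y^*=\mathrm{id}$ from \cite[Prop.\ 14.10, 14.11]{Sch17}), and then shows $Rf_*Rj_!A=0$ by evaluating on étale neighborhoods $U$ of $Z$ and applying \Cref{vanishingofpartiallycompact} to $\widehat X_U$; your version collapses these into a single section-by-section comparison, which amounts to the same thing once one notes that $R\Gamma(V,R\Psi'A)=R\Gamma(\widetilde V^{\mathrm{an}},A)$ is precisely the site-theoretic definition of $R\Psi'$ on $D^+$ and that \Cref{vanishingofpartiallycompact} supplies the matching identity for the left-hand side. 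The only place your write-up is slightly less economical than the paper's is that by not isolating the $Rj_!$ term you end up invoking base change for $i^*Rj_*$ along the étale map $\widetilde V\to X$ rather than only for the extension-by-zero functor $Rj_!$, which is the more elementary of the two; but you flag this issue, and it is handled by the same observation the paper uses (that passing to étale formal neighborhoods preserves the prekimberlite property, so \Cref{vanishingofpartiallycompact} applies locally).
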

	\begin{proof}
		We have an exact triangle $Rj_!
		A\to Rj_*A\to i_*R\Psi(A)$ in $D_\mathrm{\acute{e}t}(X,\Lambda)$, to which we apply $Rf_*$. 
		Now, by \cite[Proposition 14.10, Proposition 14.11]{Sch17} $Rf_* Rj_*=R(f_v\circ j_v)_* \nu_{Y}^* =R\Psi' \nu_{Y,*} \nu_{Y}^*=R\Psi'$. 
		It suffices to prove $Rf_*j_!A=0$. It suffices to prove $R\Gamma(U,Rf_*j_!A)=0$ for all $U\in Z_\mathrm{\acute{e}t}$. 
		We compute directly $R\Gamma(U,Rf_*j_!A)=R\Gamma(\widehat{X}_U,j_!A)=0$. The last equality follows from \Cref{vanishingofpartiallycompact}, and the fact that passing to \'etale formal neighborhoods preserves being a prekimberlite. 
	\end{proof}
	
	\begin{lemma}
		\label{nearbycyclesofperfect}
		We have canonical identifications $R(f\circ i)_*t_Z^*c_{Z}^*A=Rc_{Z,*}c_{Z}^*A=A$.
	\end{lemma}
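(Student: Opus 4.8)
The asserted chain reads $R(f\circ i)_*t_Z^*c_Z^*A=Rc_{Z,*}c_Z^*A=A$ for $A\in D_\et(Z,\Lambda)$; the plan is to show that both outer objects are canonically $A$, by checking the identities sectionwise over affine \'etale neighborhoods $U\to Z$ and then sheafifying. Since $Z=X^{\on{red}}$ is a perfect scheme over $\bbF_p$, every such $U$ is perfect and separated over $\bbF_p$, so \Cref{oneglobalsectiontorulethemall} applies to it, and the functors $t_U^*c_U^*$ and $c_U^*$ are fully faithful by \Cref{fullyfaithfulnesssmalldia} and \cite[Proposition 27.2]{Sch17} respectively; this last point is what will make the identifications canonical rather than merely abstract.

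First I would record the base-change compatibilities along an \'etale $U\to Z$: $(c_Z^*A)|_{U^\diamondsuit}\cong c_U^*(A|_U)$ and, restricting further along $t$, $(t_Z^*c_Z^*A)|_{U^\diamond}\cong t_U^*c_U^*(A|_U)$; moreover, by construction the morphism of sites $f\circ i$ carries $U\to Z$ to $U^\diamond\to Z^\diamond$, while $c_Z$ carries it to $U^\diamondsuit\to Z^\diamondsuit$. Hence $R\Gamma\bigl(U,R(f\circ i)_*t_Z^*c_Z^*A\bigr)=R\Gamma\bigl(U^\diamond,t_U^*c_U^*(A|_U)\bigr)$ and $R\Gamma\bigl(U,Rc_{Z,*}c_Z^*A\bigr)=R\Gamma\bigl(U^\diamondsuit,c_U^*(A|_U)\bigr)$. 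By the two equalities of \Cref{oneglobalsectiontorulethemall} applied to $B=A|_U$, both of these are identified with $R\Gamma(U,A|_U)$, compatibly with the evident adjunction and restriction maps issuing from $A$.

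It then remains to sheafify. Since the affine \'etale $U\to Z$ form a basis of $Z_\et$ and $R\Gamma(U,-)$ commutes with the filtered colimits computing stalks over qcqs objects (a Postnikov argument handling the possibly unbounded case, as elsewhere in this section), the natural maps $A\to R(f\circ i)_*t_Z^*c_Z^*A$ and $A\to Rc_{Z,*}c_Z^*A$ are isomorphisms on stalks, hence isomorphisms. The hard part will be the bookkeeping: one must check that the sectionwise identifications furnished by \Cref{oneglobalsectiontorulethemall} are natural in $U$ and coincide with the two adjunction units, so that they glue into the asserted canonical isomorphisms; this is precisely where full faithfulness of $t_U^*c_U^*$ and of $c_U^*$ pins the maps down uniquely. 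All the analytic input has already been extracted in \Cref{oneglobalsectiontorulethemall}, so no further geometry is needed.
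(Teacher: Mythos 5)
Your argument follows the same route as the paper: both establish the identifications by computing sections over \'etale neighborhoods $U\to Z$, invoking \Cref{oneglobalsectiontorulethemall} (after commuting $c^*$ with \'etale pullback via \cite[Proposition 27.1]{Sch17}) to identify $R\Gamma(U^\diamond,t_U^*c_U^*(A|_U))$ and $R\Gamma(U^\diamondsuit,c_U^*(A|_U))$ with $R\Gamma(U,A|_U)$, and citing \cite[Proposition 27.2]{Sch17} for the second equality. The only cosmetic difference is that you restrict to affine $U$ and stress the bookkeeping about naturality of the sectionwise identifications, whereas the paper runs the chain of equalities for arbitrary \'etale $h\colon U\to Z$ and leaves that naturality implicit; both are correct.
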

	\begin{proof}
		The second one is \cite[Proposition 27.2]{Sch17}. Let $h\colon U\to Z$ any \'etale neighborhood and let $t_U:U^\diamond\to U^\diamondsuit$ denote the natural map. We compute as follows:
		\begin{equation}
		\begin{split}
		R\Gamma(U,h^*Rc_{Z,*}c_{Z}^*A) & =  R\Gamma(U^\diamondsuit,h^{\diamondsuit,*}c_{Z}^*A) \\ 
		& =  R\Gamma(U^\diamondsuit,c_U^*h^*A) \\  
		& =  R\Gamma(U^\diamond,t_U^*h^{\diamondsuit,*}c_{Z}^*A) \\ 
		& =  R\Gamma(U^\diamond,h^{\diamond,*}t_Z^*c_{Z}^*A) \\ 
		& =  R\Gamma(U,h^*R(f\circ i)_*t^*c_{Z}^*A),			\end{split}\end{equation}
		where we have applied \cite[Proposition 27.1]{Sch17} twice to commute $c^*$ and $h^*$, and also \Cref{oneglobalsectiontorulethemall} in the middle equality.\end{proof}

	From now on assume $Y$ is a spatial diamond that has finite cohomological dimension as in \cite[Proposition 20.10]{Sch17} so that $D(Y_\mathrm{\acute{e}t},\Lambda)=D_\mathrm{\acute{e}t}(Y,\Lambda)$. We also assume that $Z$ is perfectly of finite type over the residue field of $O$ so that $D(Z_\mathrm{\acute{e}t},\Lambda)=D_\mathrm{\acute{e}t}(Z,\Lambda)$. With these hypothesis $R\Psi':D_\mathrm{\acute{e}t}(Y,\Lambda)\to D_\mathrm{\acute{e}t}(Z,\Lambda)$ is defined site theoretically (without having to left-complete) and taking stalks is well-behaved.
	\begin{definition}
		\label{defialgebraicity}
		Identify $D_\mathrm{\acute{e}t}(Z,\Lambda)$ with its essential image in $D_\mathrm{\acute{e}t}(Z^\diamond,\Lambda)$ under $t_Z^*c_{Z}^*$. We say $K\in D_\mathrm{\acute{e}t}(Z^\diamond,\Lambda)$ is algebraic if $K\in D_\mathrm{\acute{e}t}(Z,\Lambda)$.
	\end{definition}
	
	Recall that for a closed subset $S \subset U$ and an étale map $U\to Z$, we denote by $\widehat{X}_{/S}$ the formal neighborhood at $S$ in the sense of \cite[Definition 4.18]{Gle22} and by $X^\circledcirc_{/S}:=\widehat{X}_{/S}\times_X X^{\mathrm{an}}$ the tubular neighborhood in the sense of \cite[Definition 4.38]{Gle22}. (The references given assume $U=Z$, but it extends to étale neighborhoods by \cite[Theorem 4.27]{Gle22}.)
	
	\begin{theorem}
		\label{thmnearbycycles}
		\label{computationofstalks}
		Let the notation be as above and $A\in D_\mathrm{\acute{e}t}(Y)$. The following hold:
		\begin{enumerate}
			\item If $\overline{x}\in Z$ is a closed point, then $(R\Psi A)_{\overline{x}}=R\Gamma(X^\circledcirc_{/\overline{x}},A)$.
			\item $(R\Psi' A)_{\overline{x}}=\varinjlim_{\overline{x}\in U}R\Gamma(X^\circledcirc_{/U},A)$, where $U$ runs over all étale neighborhoods of $Z$ at $\overline{x}$.
			\item We have that $R\Psi A=R\Psi' A$ and $R\Gamma(X^\circledcirc_{/\overline{x}},A)=\varinjlim_{\overline{x}\in U}R\Gamma(X^\circledcirc_{/U},A)$ hold when $R\Psi A$ is algebraic.
		\end{enumerate}
	\end{theorem}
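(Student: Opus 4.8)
The plan is to establish the three parts in order; (2) is an unwinding of the definition of $R\Psi'$, (3) is a formal consequence of the lemmas already proved, and the content lies in (1). Throughout, view $\overline x$ as a geometric point of $Z$ with separably closed residue field $\kappa$, so that $\overline x^\diamond=\Spd\kappa$ and all stalks are computed there. Write $\iota\colon\widehat X_{/\overline x}\to X$ for the canonical map from the formal neighborhood, $\widehat j\colon X^\circledcirc_{/\overline x}\hookrightarrow\widehat X_{/\overline x}$ for the inclusion of its analytic locus — which is the tube $X^\circledcirc_{/\overline x}$ by definition — and $\iota_Y\colon X^\circledcirc_{/\overline x}\to Y$ for the induced map; the reduction of $\widehat X_{/\overline x}$ is $\overline x^\diamond$, and the reduction inclusion $\overline x^\diamond\hookrightarrow\widehat X_{/\overline x}$ composes with $\iota$ to give $\overline x^\diamond\hookrightarrow Z^\diamond\xrightarrow{i}X$.

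For (1), since $\overline x^\diamond$ is cohomologically trivial the stalk is computed by pullback, and the factorization above gives $(R\Psi A)_{\overline x}=(\overline x^\diamond\to X)^*Rj_*A=(\overline x^\diamond\to\widehat X_{/\overline x})^*\iota^*Rj_*A$. The key step is the base change isomorphism $\iota^*Rj_*A\simeq R\widehat j_*\iota_Y^*A$ along the Cartesian square formed by $j,\widehat j,\iota,\iota_Y$. This holds because $\iota$ is the cofiltered limit of the open immersions $V\hookrightarrow X$ over open $V\ni\overline x$ — along each of which $Rj_*$ commutes with restriction tautologically — and $\varprojlim_V(V\cap Y)=X^\circledcirc_{/\overline x}$, so one passes to the limit by continuity of the étale topos; here I would invoke \cite{Gle22} for the identification of $\widehat X_{/\overline x}$ and $X^\circledcirc_{/\overline x}$ with these limits. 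Granting it, $(R\Psi A)_{\overline x}=(\overline x^\diamond\to\widehat X_{/\overline x})^*R\widehat j_*\iota_Y^*A=R\Psi_{\widehat X_{/\overline x}}(\iota_Y^*A)$, the nearby cycles of $A$ relative to the prekimberlite $\widehat X_{/\overline x}$ (formal neighborhoods of prekimberlites being prekimberlites).

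Now apply \Cref{vanishingofpartiallycompact} with $\widehat X_{/\overline x}$ in place of $X$: it yields $R\Gamma(X^\circledcirc_{/\overline x},A)=R\Gamma(\overline x^\diamond,R\Psi_{\widehat X_{/\overline x}}(\iota_Y^*A))=R\Psi_{\widehat X_{/\overline x}}(\iota_Y^*A)$, the last equality because $\Spd\kappa$ has trivial étale cohomology with $\Lambda$-coefficients; combined with the previous paragraph this is (1). For (2), $R\Psi'$ is by construction the derived pushforward along the morphism of sites $\Psi'\colon Y_{\et}\to Z_{\et}$ of \cite[Theorem 4.27]{Gle22}: the lift of an open $U\subseteq Z$ is the open subspace of $X$ consisting of the points specializing into $U$, whose intersection with $Y$ is $X^\circledcirc_{/U}$, so $(R\Psi'A)(U)=R\Gamma(X^\circledcirc_{/U},A)$ (the left-completion causing no trouble under the standing finiteness hypotheses). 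As $Z$ is perfectly of finite type and $\kappa$ is separably closed, the open neighborhoods of $\overline x$ are cofinal among its étale neighborhoods, so taking the stalk gives $(R\Psi'A)_{\overline x}=\varinjlim_{\overline x\in U}R\Gamma(X^\circledcirc_{/U},A)$.

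For (3), assume $R\Psi A$ is algebraic, say $R\Psi A=t_Z^*c_Z^*K$ with $K\in D_{\et}(Z,\Lambda)$. Applying $R(f\circ i)_*$, \Cref{3nearbycycles} gives $R(f\circ i)_*R\Psi A=R\Psi'A$ while \Cref{nearbycyclesofperfect} gives $R(f\circ i)_*\bigl(t_Z^*c_Z^*K\bigr)=K$, hence $K=R\Psi'A$ and $R\Psi A=t_Z^*c_Z^*R\Psi'A$; by \Cref{fullyfaithfulnesssmalldia} this is precisely the claimed identity $R\Psi A=R\Psi'A$. Restricting to $\overline x^\diamond$ — where $c_Z^*$ and $t_Z^*$ preserve stalks, by \cite[Proposition 27.1]{Sch17} and $D_{\et}(\Spec\kappa,\Lambda)=D(\Lambda)=D_{\et}(\Spd\kappa,\Lambda)$ — we get $(R\Psi A)_{\overline x}=(R\Psi'A)_{\overline x}$, and comparing with (1) and (2) yields $R\Gamma(X^\circledcirc_{/\overline x},A)=\varinjlim_{\overline x\in U}R\Gamma(X^\circledcirc_{/U},A)$. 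The one real obstacle is the base change isomorphism in (1), i.e.\ commuting $Rj_*$ past the formal-completion map $\iota$; parts (2) and (3) merely rearrange the already-established \Cref{vanishingofpartiallycompact}, \Cref{3nearbycycles}, \Cref{nearbycyclesofperfect} and \Cref{fullyfaithfulnesssmalldia} together with the construction of $R\Psi'$.
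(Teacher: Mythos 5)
Your parts (2) and (3) match the paper's route: (3) is the same chain of \Cref{3nearbycycles}, \Cref{nearbycyclesofperfect} and \Cref{fullyfaithfulnesssmalldia}, and (2) reads off $R\Psi'$ from its construction as a pushforward along the site map $Y_\et\to Z_\et$, although the formula $(R\Psi'A)(U)=R\Gamma(X^\circledcirc_{/U},A)$ holds directly for all \'etale neighborhoods $U$ — you do not need a cofinality claim for Zariski opens, which is what the colimit in the statement actually ranges over. The genuine divergence is in (1), where you treat the commutation of $Rj_*$ with restriction along $\iota\colon\widehat X_{/\overline x}\to X$ as a non-trivial base change to be proved by a cofiltered-limit argument, and you explicitly flag it as ``the one real obstacle.'' The paper sidesteps this entirely: by \cite[Definition 4.18]{Gle22} the formal neighborhood $\widehat X_{/\overline x}\to X$ is an \emph{open immersion} of v-sheaves (this is specific to the kimberlite formalism and is unlike classical formal completion), so the base change is just smooth (indeed open) base change, after which \Cref{vanishingofpartiallycompact} applied to $\widehat X_{/\overline x}$ finishes (1). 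Your limit argument, as written, leaves a real gap — you never verify the required presentation of $\widehat X_{/\overline x}$ as a cofiltered limit of opens compatibly with the \'etale topoi, and it is not how \cite{Gle22} sets things up — but the open-immersion observation makes the step you were worried about tautological, so the overall strategy survives once you replace your paragraph on base change with that one-line remark.
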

	\begin{proof}
		The third claim follows directly from \Cref{nearbycyclesofperfect} and \Cref{3nearbycycles}, and from the second and first claim.  
		Let $\iota_{\overline{x}}$ denote the inclusion of $\overline{x}$. Notice that $\iota_{\overline{x}}$ factors through the open immersion $\widehat{X}_{/\overline{x}}\to X$. 
		By smooth base change and \Cref{vanishingofpartiallycompact} applied to $\widehat{X}_{/\overline{x}}\to X$ we get $(R\Psi A)_{\overline{x}}=R\Gamma(X^\circledcirc_{/\overline{x}},A)$, proving the first claim. 
		Finally, by \cite[Proposition 27.1.(ii)]{Sch17} $\iota^{\diamondsuit,*}_{\overline{x}}c_Z^*(R\Psi' A)=c_{\overline{x}}^*(R\Psi' A_{\overline{x}})$ and the latter is by definition computed by $\varinjlim_{\overline{x}\in U}R\Gamma(X^\circledcirc_{/U},A)$, since it is site theoretic. 
	\end{proof}
	
	\section{Proof of unibranchness}

	In this section, we prove \Cref{thm_normal_LM}.
	During this section, we set $\La=\bbF_\ell$ and consider $\calM_{\calG,\mu}$ already after base change to $\Spd O_C$, where $C$ is a complete algebraic closure of $F$ with ring of integers $O_C$. The reader who is only interested in the scheme-theoretic local models can imagine this takes place in the realm of formal and rigid-analytic geometry.
	
	The object $\calZ_\mu=R\Psi(\on{IC}_\mu)$ allows us to read off the set of connected components of tubes.
	
	\begin{lemma}\label{lem_conn_comps_cohom}
		There is an equality
		$\#\pi_0(\calM^\circledcirc_{\calG,\mu/x})=\on{dim}_{\bbF_\ell}\calH_{x}^{-\langle 2\rho, \mu \rangle }\calZ_\mu$. 
	\end{lemma}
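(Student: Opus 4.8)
The plan is to combine the first part of \Cref{thmnearbycycles} with the known algebraicity and perversity of $\calZ_\mu$ to reduce the computation of $\#\pi_0(\calM^\circledcirc_{\calG,\mu/x})$ to a single stalk cohomology group in the lowest possible degree. First I would recall that by \cite[Theorem 6.16]{AGLR22} the nearby cycles $\calZ_\mu = R\Psi(\on{IC}_\mu)$ are algebraic in the sense of \Cref{defialgebraicity}, so that \Cref{thmnearbycycles}(1) applies and gives
\begin{equation}
R\Gamma(\calM^\circledcirc_{\calG,\mu/x},\on{IC}_\mu) = (\calZ_\mu)_{\overline x}.
\end{equation}
The left-hand side computes the cohomology of the tubular neighborhood with $\bbF_\ell$-coefficients (up to the normalization built into $\on{IC}_\mu$, which on the open dense smooth locus is a shift by $\langle 2\rho,\mu\rangle$ of the constant sheaf). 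Thus $H^0$ of the tube, whose $\bbF_\ell$-dimension is $\#\pi_0$, is identified with the stalk cohomology in the single degree $-\langle 2\rho,\mu\rangle$.

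The key steps in order: (i) identify $\on{IC}_\mu$ on the tube with $\bbF_\ell[\langle 2\rho,\mu\rangle]$ up to a shift, using that $\Gr_{G,\mu}$ is smooth of dimension $\langle 2\rho,\mu\rangle$ and that the tubular neighborhood $\calM^\circledcirc_{\calG,\mu/x}$ has generic fiber living inside $\Gr_{G,\mu}$; (ii) since the tube is a (partially proper) spatial diamond of the kind to which the connectedness-detection of \cite{Gle22} applies, conclude $H^0(\calM^\circledcirc_{\calG,\mu/x},\bbF_\ell) = \bbF_\ell^{\#\pi_0}$; (iii) assemble: $\calH^{-\langle 2\rho,\mu\rangle}_{\overline x}\calZ_\mu = H^0(\calM^\circledcirc_{\calG,\mu/x},\bbF_\ell)$, hence the stated dimension equality; (iv) remark that lower cohomological degrees of the stalk vanish because $R\Psi(\on{IC}_\mu)$ is perverse (by \cite{ALWY22}) and $\on{IC}_\mu$ on the open orbit is concentrated in degree $-\langle 2\rho,\mu\rangle$, which is also what pins down that $-\langle 2\rho,\mu\rangle$ is the lowest non-vanishing degree, so $\calH^{-\langle 2\rho,\mu\rangle}_{\overline x}\calZ_\mu$ is exactly the degree-zero cohomology of the tube rather than a subquotient.

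The main obstacle I expect is step (i)–(ii): one must be careful that the tubular neighborhood's generic fiber is genuinely the analytic locus of the formal scheme $\calM_{\calG,\mu}$ and that $\on{IC}_\mu$ restricted there really is just a shifted constant sheaf — i.e. that the tube does not meet the non-smooth locus of $\Gr_{G,\mu}$, which holds because $\Gr_{G,\mu}$ itself is smooth, but one should phrase this via \cite[Definition 4.38]{Gle22} and the fact that the analytic locus $Y=X^{\on{an}}$ of $X=\calM_{\calG,\mu}$ is the Beilinson--Drinfeld Grassmannian's generic fiber, namely $\Gr_{G,\mu}$. Granting that, the dimension count is immediate since $H^0$ of a space with $\bbF_\ell$-coefficients has dimension equal to the number of connected components, and everything else has been reduced to the cited results \Cref{thmnearbycycles}, \cite[Theorem 6.16]{AGLR22}, and the perversity statement of \cite{ALWY22}.
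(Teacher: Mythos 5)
Your overall plan matches the paper's (use \Cref{thmnearbycycles}(1) to identify $(\calZ_\mu)_{\overline x}$ with tube cohomology, then read off $\#\pi_0$ as $\dim H^0$), but step (i) contains a genuine error: you assert that $\Gr_{G,\mu}$ is smooth and that $\on{IC}_\mu$ restricts to the shifted constant sheaf $\bbF_\ell[\langle 2\rho,\mu\rangle]$ on it. This holds only when $\mu$ is minuscule. In general $\Gr_{G,\mu}$ is the closure of the open orbit $\Gr_{G,\mu}^\circ$ — a Schubert variety — which is singular, and $\on{IC}_\mu$ is genuinely a non-constant perverse sheaf. The lemma as stated must hold for non-minuscule $\mu$ too (otherwise \Cref{thm_normal_LM} would not cover that case), so you cannot work only on the smooth locus.

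The fix is precisely what the paper does: you do not need $\on{IC}_\mu$ itself to be constant, only its bottom cohomology sheaf $\calH^{-\langle 2\rho,\mu\rangle}(\on{IC}_\mu)$. Writing $j_\mu\colon\Gr_{G,\mu}^\circ\to\Gr_G$ for the open orbit inclusion, perverse left $t$-exactness of $Rj_{\mu*}$ gives $\calH^{-\langle 2\rho,\mu\rangle}(\on{IC}_\mu)=\calH^0(Rj_{\mu*}\bbF_\ell)$, and unibranchness (normality) of Schubert varieties makes this the constant sheaf $\bbF_\ell$ on all of $\Gr_{G,\mu}$, singular points included. Since $\on{IC}_\mu$ lives in degrees $\geq -\langle 2\rho,\mu\rangle$, the truncation triangle then identifies $\calH^{-\langle 2\rho,\mu\rangle}_{\overline x}\calZ_\mu$ with $i_x^*R^0 j_*\bbF_\ell$, i.e.\ with $H^0$ of the tube, without ever needing the full complex to collapse to a constant sheaf. (A smaller point: you invoke algebraicity of $\calZ_\mu$ to justify applying \Cref{thmnearbycycles}(1), but part (1) of that theorem needs no algebraicity hypothesis — only part (3) does.)
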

	
	\begin{proof}
		Let $j_\mu \colon \Gr_{G,\mu}^\circ \to \Gr_G$ denote the orbit inclusion in the generic fiber. It follows from perverse left t-exactness of $Rj_{\mu*}$ and Schubert varieties being unibranch that \begin{equation}\calH^{-\langle 2\rho, \mu \rangle }(\on{IC}_\mu)=\calH^0 (Rj_{\mu*}\bbF_\ell)=\bbF_\ell\end{equation} is a constant sheaf on the generic fiber. (Strictly speaking, the $B_\dR^+$-affine Grassmannian is not an ind-scheme, but facts such as these can be reduced via \cite[Corollary VI.6.7]{FS21} to the Witt affine Grassmannian of the split form of $G$, which is an ind-perfect scheme.) This implies that we may replace the right side of the claimed equality by $\on{dim}_{\bbF_\ell}i_{x}^*R^0j_*\bbF_\ell$, with $j \colon \Gr_{G,\mu} \to \calM_{\calG,\mu}$ the generic fiber inclusion and $i_x\colon \Spd k \to \calM_{\calG,\mu}$ the inclusion of the point $x$. But the latter equals $\#\pi_0(\calM^\circledcirc_{\calG,\mu/x})$ by \Cref{computationofstalks}.
	\end{proof}

	Thanks to \cite[Lemma 5.26]{Gle22}, we can reduce the proof of \Cref{thm_normal_LM} to the case when $\calG$ is Iwahori, provided we verify the following.
	
	\begin{lemma}\label{lem_iwahori_conn_fibers}
		If $\calI\to \calG$ is a Iwahori dilation, then the geometric fibers of $\pi \colon \calA_{\calI,\mu}\to \calA_{\calG,\mu}$ are connected.
	\end{lemma}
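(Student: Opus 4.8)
The plan is to reduce to a single fibre over each Schubert stratum of $\calA_{\calG,\mu}$ and recognise it as a Borel-stable closed subvariety of a flag variety. Concretely, $\pi$ is the perfection of a proper, surjective morphism of $k$-schemes, obtained by restricting the projection $q\colon \Fl_\calI\to\Fl_\calG$ of Witt flag varieties; this $q$ is an fppf-locally trivial fibration whose fibres are the perfection of the flag variety $M/B_\calI$, where $M$ is the reductive quotient of the special fibre $\calG_k$ and $B_\calI\subseteq M$ is the Borel obtained as the reductive quotient of $\calI_k$ (here we use that $\calI$ is Iwahori, so that $\calI_k$ has torus reductive quotient). Since geometric connectedness of fibres is insensitive to perfection, and since $q$ and $\pi$ are $L^+\calG$-equivariant, it suffices to treat the fibre of $\pi$ over the base point $\dot x$ of each $L^+\calG$-orbit in $\calA_{\calG,\mu}$, with $x\in\tilde{W}$ the minimal-length representative of the associated $W_\calG$-double coset in $\on{Adm}(\mu)$.

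The heart of the argument is the claim that $\calA_{\calI,\mu}\cap q^{-1}(\dot x)$, a closed subvariety of the flag variety $q^{-1}(\dot x)\cong (M/B_\calI)^{\on{pf}}$, is stable under $B_\calI$ — in fact under the standard parabolic $P_x\subseteq M$ that is the image of $\on{Stab}_{L^+\calG}(\dot x)$. To see this, recall that $\calA_{\calI,\mu}$ is the union of the Iwahori orbits $\Fl_{\calI,v}$ as $v$ ranges over the lower set $\on{Adm}(\mu)\subseteq\tilde{W}$; the orbits meeting $q^{-1}(\dot x)$ are exactly those with $v$ in the $W_\calG$-double coset of $x$, and for such $v$ the intersection $\Fl_{\calI,v}\cap q^{-1}(\dot x)$ is a Bruhat cell in $M/B_\calI$, with closure relations among these cells refining the Bruhat order on $\tilde{W}$. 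The downward-closedness of $\on{Adm}(\mu)$ then forces $\calA_{\calI,\mu}\cap q^{-1}(\dot x)$ to be a union of such Bruhat cells closed under specialisation, i.e. $P_x$-stable, hence a fortiori $B_\calI$-stable.

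Granting this, the conclusion is immediate: a nonempty closed $B_\calI$-stable subvariety $Z\subseteq M/B_\calI$ is connected, because it contains the $B_\calI$-fixed point $eB_\calI$ and coincides with the union of the finitely many Schubert varieties $\overline{B_\calI w B_\calI/B_\calI}$ it contains, each of which passes through $eB_\calI$. Applying this to $Z=\calA_{\calI,\mu}\cap q^{-1}(\dot x)$ finishes the proof. The main obstacle is the combinatorial bookkeeping in the middle step: identifying the Iwahori orbits lying over a fixed point of $\Fl_\calG$ with Bruhat cells of $M/B_\calI$ and verifying that their closure order is the restriction of the Bruhat order on $\tilde{W}$, which is precisely what converts the lower-set property of $\on{Adm}(\mu)$ into parabolic stability; one should be careful here with the choice of reduced representatives of $W_\calG$-double cosets. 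As a fallback, one could instead run a Stein-factorisation argument: show $\pi_*\calO_{\calA_{\calI,\mu}}=\calO_{\calA_{\calG,\mu}}$ — for instance from compatible Frobenius splittings of the two admissible loci, or by combining connectedness of fibres over points of codimension $\le 1$ (where \Cref{prop_irred_comp_codim_1} gives the needed control) with the perfectly $S_2$ property of $\calA_{\calG,\mu}$ as in \Cref{cor_adm_locus_S2} — after which $\on{Spec}_{\calA_{\calG,\mu}}\pi_*\calO\to\calA_{\calG,\mu}$ is finite, surjective and an isomorphism on structure sheaves, hence an isomorphism, so that $\pi$ has connected fibres.
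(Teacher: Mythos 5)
Your core idea is the same as the paper's: reduce via equivariance to the fibre over a suitable fixed point, recognize $q^{-1}$ of that point as (the perfection of) $M/B_\calI$, and conclude connectedness because the intersection with $\calA_{\calI,\mu}$ is a union of pieces all passing through a common point. The paper gets that last step via Demazure resolutions (each $\pi^{-1}(w_\calG)\cap \Fl_{\calI,v}$ is connected and contains $w$ by minimality of $w$ in its coset), whereas you argue via $B_\calI$-stability of the fibre inside $M/B_\calI$. Both are workable.

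There is, however, one genuine misstep in your reduction. You invoke $L^+\calG$-equivariance of $\pi$ to reduce to one fibre per $L^+\calG$-orbit (per $W_\calG$-double coset). But $\pi$ is only $L^+\calI$-equivariant: $\calA_{\calI,\mu}$ is the union of Iwahori--Schubert varieties indexed by $\on{Adm}(\mu)$, and $\on{Adm}(\mu)$ is not in general a union of $W_\calG$-double cosets, so $\calA_{\calI,\mu}\neq q^{-1}(\calA_{\calG,\mu})$ and translating by $g\in L^+\calG\setminus L^+\calI$ does not preserve $\calA_{\calI,\mu}$. Consequently two points of $\calA_{\calG,\mu}$ in the same $L^+\calG$-orbit but different $L^+\calI$-orbits can have non-isomorphic $\pi$-fibres, and your reduction misses some of them. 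The fix is exactly what the paper does: use $L^+\calI$-equivariance, which reduces to the image $w_\calG$ of a $T$-fixed point $w$ in $\calA_{\calI,\mu}$, indexed by cosets $wW_\calG$ (not double cosets), with $w$ taken minimal in $wW_\calG$. The rest of your argument then goes through essentially unchanged: $q^{-1}(w_\calG)\cong(M/B_\calI)^{\on{pf}}$, the cells meeting it correspond to $\{u\in W_\calG : wu\in\on{Adm}(\mu)\}$, minimality of $w$ gives length additivity $\ell(wu)=\ell(w)+\ell(u)$ and hence compatibility of the two Bruhat orders, so downward-closedness of $\on{Adm}(\mu)$ yields downward-closedness of this set of $u$, and every $B_\calI$-Schubert variety in $M/B_\calI$ contains the base point (which is $w$, lying in $\on{Adm}(\mu)$ by minimality). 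Do note that identifying the $L^+\calI$-orbits on $q^{-1}(w_\calG)$ with the $B_\calI$-orbits on $M/B_\calI$ (i.e.\ that $w^{-1}L^+\calI w\cap L^+\calG$ has reductive image exactly $B_\calI$ when $w$ is minimal) is the bit of bookkeeping you flagged and does need to be spelled out, but it is standard Bruhat--Tits theory.

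Your fallback via Stein factorization and the $S_2$ property of $\calA_{\calG,\mu}$ combined with the codimension-$\le 1$ control from \Cref{prop_irred_comp_codim_1} is a genuinely different route and also plausible; it is philosophically close to how the paper handles the deeper strata in the proof of \Cref{thm_normal_LM} itself. But it is heavier machinery than what is needed here, and the paper uses this lemma precisely to \emph{reduce} to the Iwahori case before bringing in $S_2$ and nearby cycles, so routing the Iwahori reduction through those same tools would be somewhat circular in spirit even if not literally so.
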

	
	\begin{proof}
		By $\calI$-equivariance, we are reduced to considering the fiber over the image $w_\calG$ of a $L^+\calT$-fixed point $w \in \calA_{\calI,\mu}$. We may and do assume that $w$ is minimal in its right $W_\calG$-coset. Using Demazure resolutions, one sees that the intersection of $\pi^{-1}(w_\calG)$ with any Schubert variety is connected. As all of those subschemes must contain $w$ by minimality, the fiber itself must be connected. 
	\end{proof}

	From now on, we work with a Iwahori model $\calI$. To calculate in codimension $1$, we are going to apply the Wakimoto filtration of the Gaitsgory central functor studied in \cite{Gai01,AB09} in equicharacteristic and in \cite{AGLR22,ALWY22} in mixed characteristic.
	
	\begin{theorem}[\cite{ALWY22}]\label{theorem_perverse_wakimoto_filtration}
		The functor $R\Psi$ is perverse t-exact. Moreover, the perverse sheaf $R\Psi(\on{Sat}(V))$ admits a filtration with subquotients isomorphic to $\calJ_{\bar{\nu}}\otimes V(w_0\bar{\nu})$.
	\end{theorem}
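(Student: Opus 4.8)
The plan is to follow Gaitsgory's equicharacteristic construction of the central functor \cite{Gai01} together with the Wakimoto analysis of Arkhipov--Bezrukavnikov \cite{AB09}, importing into mixed characteristic the algebraicity of $R\Psi(\Sat(V))$ proved in \cite{AGLR22} and the comparison of nearby cycles of \Cref{thmnearbycycles}. The first reduction I would make is to pass, via \cite[Corollary VI.6.7]{FS21}, from the $B_{\dR}^{+}$-affine flag variety (which is not literally an ind-scheme) to the Witt-vector affine flag variety of the split form of $G$, an ind-(perfect scheme) on which the perverse $t$-structure on $L^{+}\calI$-equivariant sheaves is available exactly as in the equicharacteristic theory, and on which the geometric arguments below transpose with only notational changes.

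For perverse $t$-exactness I would invoke the classical fact that nearby cycles over a trait, normalized as $R\Psi = \psi[-1]$, are perverse $t$-exact, provable by the usual combination of affine Artin vanishing with compatibility with Verdier duality $\bbD$; applied to $\Sat(V)$, which is perverse by geometric Satake, this gives perversity of $R\Psi(\Sat(V))$, in particular of $\calZ_\mu$. Already here the v-sheaf formalism is needed in an essential way: the duality compatibility, automatic in the scheme setting, rests on the comparison of \Cref{thmnearbycycles} -- applicable because $R\Psi(\Sat(V))$ is algebraic in the sense of \Cref{defialgebraicity} -- together with the duality theory of \cite{FS21}.

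For the Wakimoto filtration I would begin from the centrality of $\calZ(V) := R\Psi(\Sat(V))$: the functorial convolution isomorphisms $\calZ(V)\star\calF\cong\calF\star\calZ(V)$ for $\calF$ an $L^{+}\calI$-equivariant perverse sheaf on $\Fl$, subject to the hexagon axioms, obtained from the factorization structure of the Beilinson--Drinfeld Grassmannian as in \cite{Gai01}. The geometric input is a regular cocharacter $\theta\colon\Gm\to T$ and the attractor decompositions it induces on the whole degenerating family: on the generic fibre the attractor strata of $\Gr$ are the Mirkovi\'c--Vilonen cells, whose hyperbolic localization computes, through geometric Satake, the weight spaces $V(\bar\nu)$ concentrated in a single degree; on the special fibre the corresponding strata of $\Fl$ are precisely those defining the Wakimoto sheaves $\calJ_{\bar\nu}$ (the $j_{!}$-extensions of shifted constant sheaves for $\bar\nu$ dominant, the $j_{*}$-extensions for $\bar\nu$ antidominant, and $\calJ_{\bar\nu_1}\star\calJ_{-\bar\nu_2}$ in general, monoidal in $\bar\nu$). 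The crux is that $R\Psi$ intertwines these two hyperbolic-localization pictures; granting this, I would order the distinct weights $\bar\nu_1,\dots,\bar\nu_r$ of $V$ so that $\theta(\bar\nu_1)\le\cdots\le\theta(\bar\nu_r)$, note that $\bar\nu_1$ is extreme, and extract from $\calZ(V)\star\calJ_{-\bar\nu_1}$, via the costalk at the base alcove, the multiplicity space $V(w_0\bar\nu_1)$ (the $w_0$ being bookkeeping between Mirkovi\'c--Vilonen cells and semi-infinite orbits of $\Fl$). This produces a map of perverse sheaves $\calJ_{\bar\nu_1}\otimes V(w_0\bar\nu_1)\hookrightarrow\calZ(V)$ -- both sides perverse since Iwahori orbits are affine spaces -- whose cokernel is again, by perverse $t$-exactness and centrality, the central sheaf attached to $V$ with the $\bar\nu_1$-weight line removed; iterating $r$ times yields the filtration with subquotients $\calJ_{\bar\nu}\otimes V(w_0\bar\nu)$.

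The step I expect to be the main obstacle is the compatibility of $R\Psi$ with hyperbolic localization (and, secondarily, with $\bbD$): in the scheme setting these follow from Braden's theorem and the six-functor formalism, but for v-sheaves they require carefully combining the reduction to the Witt-vector flag variety, the algebraicity of $R\Psi(\Sat(V))$, and \Cref{thmnearbycycles} even to formulate precisely; this is the technical heart of \cite{ALWY22}, whose theorem we are citing.
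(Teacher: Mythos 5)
The paper does not prove this theorem: it cites it as a black box from \cite{ALWY22} and, in the paragraph following the statement, records only that the proof there relies on the Wakimoto sheaves, the centrality of $R\Psi(\on{Sat}(V))$ from \cite[Proposition 6.17]{AGLR22}, and the constant-term computation of \cite[Equation (6.32)]{AGLR22}. Your sketch agrees with that description in outline---reduction to the Witt flag variety via \cite[Corollary VI.6.7]{FS21}, centrality, a hyperbolic-localization/constant-term argument peeling off Wakimoto subquotients---and you correctly flag the compatibility of $R\Psi$ with hyperbolic localization as the technical crux being imported from the cited reference. Two corrections, though. First, perverse $t$-exactness of $R\Psi$ cannot be ``invoked as a classical fact'' via affine Artin vanishing and duality: over the $B_{\dR}^+$-affine Grassmannian there is no off-the-shelf Gabber-type theorem, and the duality comparison through \Cref{thmnearbycycles} presupposes algebraicity, which \cite{AGLR22} proves only for $R\Psi(\on{Sat}(V))$, not for arbitrary perverse sheaves; this $t$-exactness is itself one of the new results of \cite{ALWY22}, not a corollary of a pre-existing theorem. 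Second, your Wakimoto convention is flipped relative to the paper's: here $\calJ_{\bar\nu}=\nabla_{\bar\nu}=Rj_{\bar\nu,*}\bbF_\ell[\ell(t_{\bar\nu})]$ for $B$-\emph{dominant} $\bar\nu$, with the standard $j_!$-extension reserved for antidominant $\bar\nu$, whereas you assign $j_!$ to the dominant case. This choice is used essentially in \Cref{prop_unibranch_codim_1}, where $\bar\nu_1\in X_*(T)_I^-$ is taken antidominant precisely so that $\calJ_{\bar\nu_1}$ is standard and contributes nothing to the boundary stalk. Finally, a minor slip: the ordering ``$\theta(\bar\nu_1)\le\cdots\le\theta(\bar\nu_r)$'' does not parse, since $\theta$ is a cocharacter of $T$ while $\bar\nu_i\in X_*(T)_I$; one should pair the $\bar\nu_i$ against a regular dominant \emph{character} of $T$.
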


		The Wakimoto sheaves $\calJ_{\bar{\nu}}$ depend crucially on the choice of a Borel subgroup $B \subset G$ and we exploit this degree of freedom in \Cref{prop_unibranch_codim_1} by conveniently choosing $B$. For the images $\bar{\nu}$ of $B$-dominant coweights, $\calJ_{\bar{\nu}}$ are defined as the costandard object $\nabla_{\bar{\nu}}=Rj_{\bar{\nu},*}\bbF_\ell[\ell(t_{\bar{\nu}})]$, which admits the standard object $\Delta_{\bar{\nu}}=j_{\bar{\nu},!}\bbF_\ell[\ell(t_{\bar{\nu}})]$ as a left and right inverse for convolution. Here $j_{\bar{\nu}}: \Fl_{\calI,t_{\bar{\nu}}}^\circ \to \Fl_{\calI}$ is the natural orbit inclusion, which is affine, so the derived functors are perverse t-exact by \cite[Corollaire 5.1.3]{BBDG18}. The definition extends by convolution and linearity to the other $\bar{\nu}$. Just as in \cite[Theorem 5]{AB09}, it turns out that $\calJ_{\bar{\nu}}$ is a perverse sheaf concentrated in $\Fl_{\calI,t_{\bar{\nu}}}$ and restricts to $\bbF_\ell[\ell(t_{\bar{\nu}})]$ on $\Fl_{\calI,t_{\bar{\nu}}}^\circ$.		
		The proof of \Cref{theorem_perverse_wakimoto_filtration} in \cite{ALWY22} relies not only on the $\calJ_{\bar{\nu}}$, but also on $R\Psi(\on{Sat}(V))$ being central, see \cite[Proposition 6.17]{AGLR22}, and the computation of its constant terms, see \cite[Equation (6.32)]{AGLR22}.

	\begin{lemma}\label{lemma_cohomology_wakimoto}
		For any $x\in\Fl_\calI(k)$, the $\bbF_\ell$-vector space $\calH_x^{-\ell(t_{\bar{\nu}})}\calJ_{\bar{\nu}}$ is either zero or one-dimensional.
	\end{lemma}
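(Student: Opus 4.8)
The plan is to establish the stronger assertion that $\calH^{-\ell(t_{\bar\nu})}\calJ_{\bar\nu}$ is a subsheaf of the (extension by zero of the) constant sheaf $\bbF_\ell$ on the Schubert variety $\Fl_{\calI,t_{\bar\nu}}$; taking stalks then gives the claimed bound. Write $d=\ell(t_{\bar\nu})$, let $Y=\Fl_{\calI,t_{\bar\nu}}$, an irreducible perfect projective $k$-scheme of dimension $d$, let $U=\Fl^\circ_{\calI,t_{\bar\nu}}$ be its dense open orbit, $j\colon U\hookrightarrow\Fl_\calI$ the inclusion, and $Z=Y\setminus U$, so $\dim Z\le d-1$. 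The only properties of $\calJ_{\bar\nu}$ I would use are those recorded after \Cref{theorem_perverse_wakimoto_filtration}: $\calJ_{\bar\nu}$ is perverse, supported on $Y$, and $\calJ_{\bar\nu}|_U=\bbF_\ell[d]$; in particular $\calH^{-d}(\calJ_{\bar\nu})|_U=\bbF_\ell$ and, by perversity, $\calJ_{\bar\nu}$ lies in cohomological degrees $\ge -d$. If $d=0$ then $Y$ is a reduced point and the statement is trivial, so assume $d\ge 1$.

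The heart of the argument is to show that $\calH^{-d}(\calJ_{\bar\nu})$ has no nonzero subsheaf supported on $Z$. Suppose $\calF'\subseteq\calH^{-d}(\calJ_{\bar\nu})$ were such a subsheaf. Choose a locally closed $W_0\subseteq Z$, of dimension $m\le d-1$, on which $\calF'$ restricts to a nonzero local system $\calG$. By adjunction there is a morphism $j_{W_0,!}\calG\to\calH^{-d}(\calJ_{\bar\nu})$, which is injective, being injective over $W_0$ while its source vanishes outside $W_0$. Since $j_{W_0,!}\calG[d]$ is concentrated in degree $-d$ and $\calJ_{\bar\nu}$ lies in degrees $\ge -d$, one has $\Hom(j_{W_0,!}\calG[d],\calJ_{\bar\nu})=\Hom(j_{W_0,!}\calG,\calH^{-d}(\calJ_{\bar\nu}))$, so this morphism lifts to a nonzero map $j_{W_0,!}\calG[d]\to\calJ_{\bar\nu}$. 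But $j_{W_0,!}\calG$ is a sheaf whose support has dimension $m\le d-1$, so $j_{W_0,!}\calG[d]$ lies in perverse degrees $\le m-d\le -1$, whereas $\calJ_{\bar\nu}$ is perverse; hence there is no such nonzero morphism, a contradiction. (This is the familiar fact that the bottom cohomology sheaf of a perverse sheaf on a $d$-dimensional variety acquires no new sections along strata of dimension $<d$.)

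It follows that the adjunction map $\calH^{-d}(\calJ_{\bar\nu})\to j_*j^*\calH^{-d}(\calJ_{\bar\nu})=j_*\bbF_\ell$ is injective. Finally, I would invoke that the Schubert variety $Y=\Fl_{\calI,t_{\bar\nu}}$ in the Witt affine flag variety is geometrically unibranch --- a well-known property of affine Schubert varieties (see e.g.\ \cite[Proposition 3.7]{AGLR22}, or note that the Demazure resolution of $Y$ is a proper birational morphism from a smooth scheme with connected fibres) --- so that, $U$ being a smooth dense open, $j_*\bbF_\ell$ is the constant sheaf $\bbF_\ell$ on $Y$ extended by zero. Thus $\calH^{-d}(\calJ_{\bar\nu})$ embeds into a sheaf whose stalks are at most $1$-dimensional, which proves the lemma. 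The only ingredient external to the present section is the unibranchness of affine Schubert varieties; the step I expect to require the most care is the homological one of the previous paragraph, namely verifying that $\calH^{-d}(\calJ_{\bar\nu})$ has no sections supported on the boundary $Z$.
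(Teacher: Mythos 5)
Your proof is correct and uses essentially the same mechanism as the paper: perversity of $\calJ_{\bar\nu}$ together with the dimension estimate on the boundary forces $\calH^{-\ell(t_{\bar\nu})}(\calJ_{\bar\nu})$ to embed into $R^0j_*\bbF_\ell$, which has one-dimensional stalks by normality (equivalently unibranchness) of the Schubert variety. The paper reaches the embedding a bit more economically by applying the adjunction unit $\calJ_{\bar\nu}\to\nabla_{\bar\nu}=Rj_{\bar\nu,*}\bbF_\ell[\ell(t_{\bar\nu})]$ at the perverse level and noting that its kernel and cokernel, being perverse and supported on the boundary, vanish in cohomological degree $-\ell(t_{\bar\nu})$, whereas you instead isolate and prove the general fact that the bottom cohomology sheaf of a perverse sheaf has no nonzero subsheaf supported on a stratum of smaller dimension and then apply the sheaf-level adjunction for the open orbit; both are correct and rest on the same two inputs.
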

	
	\begin{proof}
		Consider the adjunction map $\calJ_{\bar{\nu}} \to \nabla_{\bar{\nu}}$. It follows that the kernel and cokernel have support contained in $\Fl_{\calI,t_{\bar{\nu}}} \setminus \Fl_{\calI,t_{\bar{\nu}}}^\circ$. In particular, they are concentrated on degrees strictly larger than $-\ell(t_{\bar{\nu}})$ by perversity, so that
		\begin{equation}
		\calH_x^{-\ell(t_{\bar{\nu}})}\on{ker}(\calJ_{\bar{\nu}} \to \nabla_{\bar{\nu}} )=\calH_x^{-\ell(t_{\bar{\nu}})}\on{coker}(\calJ_{\bar{\nu}} \to \nabla_{\bar{\nu}} )=0.
		\end{equation}
		This yields an inclusion $\calH_x^{-\ell(t_{\bar{\nu}})}\calJ_{\bar{\nu}} \subset \calH_x^{-\ell(t_{\bar{\nu}})} \nabla_{\bar{\nu}} $. The latter sheaf is constant equal to $\bbF_\ell$, because the Schubert perfect variety $\Fl_{\calI,t_{\bar{\nu}}}$ is normal.
	\end{proof}
	
	\begin{proposition}\label{prop_unibranch_codim_1}
		Given $x \in \calA_{\calI,\mu}(k)$ whose $L^+\calI$-orbit has codimension at most $1$, we have an equality $\calH_x^{-\langle2\rho, \mu \rangle}\calZ_\mu=\bbF_\ell$.
	\end{proposition}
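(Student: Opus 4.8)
The plan is to read off $\calH^{-d}_x\calZ_\mu$, with $d=\langle 2\rho,\mu\rangle$, from the Wakimoto filtration of \Cref{theorem_perverse_wakimoto_filtration}. First I would identify which graded pieces $\calJ_{\bar\nu}\otimes V_\mu(w_0\bar\nu)$ can have non-zero stalk in degree $-d$. Since $\calJ_{\bar\nu}$ is a perverse sheaf concentrated on the Schubert variety $\Fl_{\calI,t_{\bar\nu}}$ of dimension $\ell(t_{\bar\nu})$, its cohomology vanishes below degree $-\ell(t_{\bar\nu})$; and as the maximal elements of $\on{Adm}(\mu)$ form the $W_0$-orbit $\{t_{\bar\nu}:\bar\nu\in\La(\mu)\}$, all of length $d$, only the extremal $\bar\nu\in\La(\mu)$ survive, each with weight multiplicity $\dim V_\mu(w_0\bar\nu)=1$. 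Running the long exact sequences of the filtration, this yields
\begin{equation}
\dim_{\bbF_\ell}\calH^{-d}_x\calZ_\mu\ \leq\ \sum_{\bar\nu\in\La(\mu),\ x\leq t_{\bar\nu}}\dim_{\bbF_\ell}\calH^{-d}_x\calJ_{\bar\nu}.
\end{equation}

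To bound the individual summands I would use the canonical maps $\Delta_{\bar\nu}\to\calJ_{\bar\nu}\to\nabla_{\bar\nu}$: they are isomorphisms over the open orbit, so their kernels and cokernels are perverse and supported on the boundary of $\Fl_{\calI,t_{\bar\nu}}$, hence live in degrees $>-d$; therefore $\calH^{-d}\Delta_{\bar\nu}\hookrightarrow\calH^{-d}\calJ_{\bar\nu}\hookrightarrow\calH^{-d}\nabla_{\bar\nu}$, where the left term is $j_{t_{\bar\nu},!}\bbF_\ell$ and the right term is the constant sheaf $\bbF_\ell$ by normality of Schubert varieties, exactly as in \Cref{lemma_cohomology_wakimoto}. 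In particular $\dim_{\bbF_\ell}\calH^{-d}_x\calJ_{\bar\nu}\leq 1$, and it is $0$ whenever $\calJ_{\bar\nu}$ restricts near $x$ to the extension by zero of $\bbF_\ell[d]$ from the open orbit (e.g.\ when $\bar\nu$ is anti-dominant, so $\calJ_{\bar\nu}=\Delta_{\bar\nu}$).

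Now, if the $L^+\calI$-orbit of $x$ is maximal then $x\leq t_{\bar\nu}$ forces $t_{\bar\nu}=x$, so there is a single summand, equal to $\bbF_\ell$. If the orbit of $x$ has codimension $1$, \Cref{prop_irred_comp_codim_1} leaves at most two $\bar\nu_1,\bar\nu_2\in\La(\mu)$ above $x$, and by \Cref{rem_irred_x_explicit} they are interchanged by a single reflection $s_\beta$ of the échelonnage root system, with $x=t_{\bar\nu_1}s_\beta$. I would choose the Borel $B$ so that $\bar\nu_1$ is $B$-dominant; then $\calJ_{\bar\nu_1}=\nabla_{\bar\nu_1}$ contributes $\bbF_\ell$, while $\bar\nu_2=s_\beta(\bar\nu_1)$ lies strictly lower, and $\beta$ is precisely the direction transverse to the orbit of $x$ — which is how $s_\beta$ enters in \Cref{rem_irred_x_explicit}. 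The claim to verify is that $\calJ_{\bar\nu_2}$ is of standard type along that transverse direction, so that $\calH^{-d}_x\calJ_{\bar\nu_2}=0$; granting this, $\dim\calH^{-d}_x\calZ_\mu\leq 1$, and together with $\dim\calH^{-d}_x\calZ_\mu=\#\pi_0(\calM^\circledcirc_{\calG,\mu/x})\geq 1$ from \Cref{lem_conn_comps_cohom}, we obtain $\calH^{-d}_x\calZ_\mu=\bbF_\ell$.

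The main obstacle is the vanishing $\calH^{-d}_x\calJ_{\bar\nu_2}=0$, because $\bar\nu_2$ need be neither dominant nor anti-dominant, so $\calJ_{\bar\nu_2}$ is a twisted convolution rather than a bare standard sheaf. The cleanest route I see is to restrict everything to a one-dimensional slice transverse to the $L^+\calI$-orbit of $x$: by equivariance the local picture is governed by the single affine reflection through $x$, which reduces the computation to the rank-one ($\GL_2$-type) situation, where $\calJ_{\bar\nu_2}=\Delta_{\bar\nu_2}=j_{t_{\bar\nu_2},!}\bbF_\ell[d]$ and the vanishing at the boundary point is immediate. As a sanity check (and a fallback for part of the cases): when $\mu$ is minuscule the scheme $\calM^{\on{sch}}_{\calG,\mu}$ has smooth generic fiber and $R_0$ special fiber, hence is regular in codimension $\leq 1$, so the tube at $x$ is automatically connected and the equality follows directly.
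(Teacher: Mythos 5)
Your overall strategy — Wakimoto filtration, the bound $\dim\calH^{-d}_x\calJ_{\bar\nu}\leq 1$, and the lower bound $\geq 1$ from $\pi_0$ of the tube — is exactly the paper's, but you make a suboptimal choice of Borel that creates the gap you yourself flag as the main obstacle. You pick $B$ so that $\bar\nu_1$ is $B$-\emph{dominant}, which forces $\calJ_{\bar\nu_1}=\nabla_{\bar\nu_1}$ to contribute a full copy of $\bbF_\ell$; you are then obliged to prove the non-trivial vanishing $\calH^{-d}_x\calJ_{\bar\nu_2}=0$, and your sketch (rank-one slice transverse to the orbit, reducing to a $\GL_2$ computation) is not carried out and is not at all immediate — $\calJ_{\bar\nu_2}$ is a genuine convolution of a standard and a costandard sheaf, and the restriction to a slice does not obviously commute with that convolution. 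That step is a real gap.

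The paper avoids this entirely by making the opposite normalization: choose the Borel so that $\bar\nu_1$ is \emph{anti-dominant}, i.e.\ $\bar\nu_1\in X_*(T)_I^-$. Then $\calJ_{\bar\nu_1}=\Delta_{\bar\nu_1}=j_{\bar\nu_1,!}\bbF_\ell[\ell(t_{\bar\nu_1})]$ is an extension by zero from the open orbit of $t_{\bar\nu_1}$, so its stalk at any $x<t_{\bar\nu_1}$ vanishes for free. The filtration then yields $\calH^{-d}_x\calZ_\mu\hookrightarrow V_\mu(w_0\bar\nu_2)\otimes\calH^{-d}_x\calJ_{\bar\nu_2}$, and \Cref{lemma_cohomology_wakimoto} (your ``sandwich'' $\calH^{-d}\Delta\hookrightarrow\calH^{-d}\calJ\hookrightarrow\calH^{-d}\nabla$, which is correct) gives the upper bound $1$ without needing $\calJ_{\bar\nu_2}$ to vanish at all. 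Combined with your $\geq 1$ bound from \Cref{lem_conn_comps_cohom}, this closes the argument. So the fix is simply to swap the roles of $\bar\nu_1$ and $\bar\nu_2$; with that change, your ``main obstacle'' evaporates and you do not need \Cref{rem_irred_x_explicit} or any slice argument. Your minuscule fallback is a legitimate sanity check for the scheme-theoretic case (smooth generic fiber plus $R_0$ special fiber plus $O$-flatness does give $R_1$), but it does not cover the non-minuscule $p$-adic case and should not substitute for fixing the Wakimoto argument.
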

	
	\begin{proof}
		This is evident in codimension $0$. 
		By abuse of notation, we denote by $x$ the corresponding element in $\on{Adm}(\mu)$ and let $x < t_{\bar{\nu}_i}$ with $\bar \nu_i \in \La(\mu)$ with $i=1,2$ be the only (possibly equal) maximal elements above $x$ by \Cref{prop_irred_comp_codim_1}.
		Assume without loss of generality that $\bar{\nu}_1\in X_*(T)_I^-$ by replacing our initial choice of Borel if necessary. 
		Consider the Wakimoto filtration of the perverse sheaf $\calZ_\mu$ given in \Cref{theorem_perverse_wakimoto_filtration}. 
		Only the $\calJ_{\bar{\nu}_i}\otimes V(w_0\bar{\nu}_i)$ for $i=1,2$ contribute to the stalk at $x$.
		Since we chose $\calJ_{\bar{\nu}_1}$ to be standard, it is concentrated exclusively on a maximal orbit and hence does not contribute to the stalk at $x$. 
		Consequently, $\calH_x^{-\langle2\rho, \mu \rangle}\calZ_\mu=V(w_0\bar{\nu}_2)\otimes \calH_x^{-\langle2\rho, \mu \rangle}(\calJ_{\bar{\nu}_2})$.
		As $V_\mu(w_0\bar{\nu}_2)=\bbF_\ell$ for extremal weights, \Cref{lemma_cohomology_wakimoto} bounds the dimension above by $1$. 
		On the other hand, \Cref{lem_conn_comps_cohom} and density of the generic fiber bound the dimension below by $1$. (This also implies $\bar \nu_1 \neq \bar \nu_2$.)
	\end{proof}

	We have now all the necessary tools at our disposal to finish the proof of the main theorem.

	\begin{proof}[Proof of \Cref{thm_normal_LM}]
		Assume there is $x \in \calA_{\calI,\mu}(k)$ such that $\calM_{\calI,\mu/x}^\circledcirc$ is disconnected. By \Cref{computationofstalks} there is a connected \'etale neighborhood $x\in U\to \calA_{\calI,\mu}$ such that \begin{equation}{\calM}^{\circledcirc}_{\calI,\mu/U}=V_1 \sqcup V_2\end{equation} is a union of two non-empty clopen subsets. 
		Let $\iota \colon W \subset U$ be the open subset obtained by pulling back the union of $L^+\calI$-orbits of $ \calA_{\calI,\mu}$ of codimension at most $1$. 
		Since the $S_2$ property is stable under étale maps and by \Cref{cor_adm_locus_S2}, the scheme $W$ is connected. Indeed, otherwise $\calO_U=\iota_*\calO_W$ would contain non-trivial idempotents contradicting our assumption on $U$. Apply \Cref{lem_conn_comps_cohom}, \Cref{prop_unibranch_codim_1} and \cite[Lemma 4.55]{Gle22} to conclude that ${\calM}^{\circledcirc}_{\calI,\mu/W}$ is connected as well, so $\on{sp}^{-1}(W)$ is entirely contained in $V_1$, say.
		
		On the other hand, the disjoint union induces a direct sum decomposition $\calZ_\mu|_U=A_1 \oplus A_2$ into non-zero perverse sheaves, where the $A_i$ are the nearby cycles of $\on{IC}_\mu$ after pulling it back to $V_i$. We know by construction that $\calH^{-\langle 2\rho, \mu \rangle}A_2$ does not vanish, and also that the support of $A_2$ has codimension at least $2$ in $U$ by the previous paragraph. These two facts contradict $A_2$ being perverse, so our initial assumption was wrong. 
	\end{proof}
	\bibliography{biblio.bib}
	\bibliographystyle{alpha}
\end{document}